\newtheorem{theorem}{Theorem}[section] 
\newtheorem{lemma}[theorem]{Lemma}
\newtheorem{proposition}[theorem]{Proposition}
\newtheorem{corollary}[theorem]{Corollary}
\newtheorem{conjecture}[theorem]{Conjecture}
\numberwithin{equation}{section}
\theoremstyle{definition}
\newtheorem{definition}[theorem]{Definition}
\newtheorem{example}[theorem]{Example}
\newtheorem{remark}[theorem]{Remark}
\newtheorem{ack}{Acknowledgements}
\newcommand{\we}{\wedge}
\renewcommand{\d}{{\rm d}}
\newcommand{\X}{\ensuremath{\mathcal X}}
\renewcommand{\P}{\ensuremath{\mathbb P}}
\renewcommand{\S}{\ensuremath{\mathcal S}}
\newcommand{\A}{\ensuremath{\mathcal A}}
\newcommand{\B}{\ensuremath{\mathcal B}}
\renewcommand{\AA}{\ensuremath{A^\cdot}}
\renewcommand{\S}{\ensuremath{\mathcal S}}
\newcommand{\T}{\ensuremath{\mathcal T}}
\newcommand{\TP}{\ensuremath{\mathbb{TP}}}
\newcommand{\G}{\ensuremath{\mathfrak G}}
\newcommand{\C}{\ensuremath{\mathbb C}}
\newcommand{\R}{\ensuremath{\mathcal {R}}}
\newcommand{\Z}{\ensuremath{\mathbb Z}}
\newcommand{\bbR}{\ensuremath{\mathbb R}}
\newcommand{\la}{\ensuremath{\lambda}}
\renewcommand{\a}{\ensuremath{\mathfrak a}}
\newcommand{\trop}{\operatorname{trop}} 
\newcommand{\Fitt}{\operatorname{Fitt}} 
\newcommand{\Sing}{\operatorname{Sing}}
\newcommand{\crit}{\operatorname{crit}}
\newcommand{\rank}{\operatorname{rank}}
\newcommand{\codim}{\operatorname{codim}}
\newcommand{\im}{\operatorname{im}}
\newcommand{\Jac}{J}
\newcommand{\Ceva}{{\v Ceva\ }}
\newcommand{\xzw}{{\frac{-1}{x+z+w}}}
\newcommand{\xyw}{{\frac{-1}{x+y+w}}}
\newcommand{\xyz}{{\frac{-1}{x+y+z}}}
\newcommand{\yzw}{{\frac{-1}{y+z+w}}}
\renewcommand{\phi}{\varphi}
\renewcommand{\tilde}{\widetilde}
\renewcommand{\bar}{\overline}
\newcommand{\abs}[1]{\left|#1\right|}
\newcommand{\set}[1]{\left\{#1\right\}}
\begin{document}
\title[Vanishing products]{Vanishing products of one-forms and critical points of master functions}

\author[D. Cohen]{Daniel C. Cohen$^1$}
\address{Department of Mathematics, Louisiana State University\\
Baton Rouge, LA 70803, USA}
\email{\href{mailto:cohen@math.lsu.edu}{cohen@math.lsu.edu}}
\urladdr{\href{http://www.math.lsu.edu/~cohen/}
{www.math.lsu.edu/\char'176cohen}}
\thanks{{$^1$}Partially supported 
by National Security Agency grant H98230-05-1-0055}

\author[G. Denham]{Graham Denham$^2$}
\address{Department of Mathematics, University of Western Ontario\\
London, ON  N6A 5B7, Canada}
\urladdr{\href{http://www.math.uwo.ca/~gdenham}%
{www.math.uwo.ca/\char'176gdenham}}
\thanks{{$^2$}Partially supported by a grant from NSERC of Canada}

\author[M. Falk]{Michael Falk}
\address{Department of Mathematics and Statistics, 
Northern Arizona University\\
 Flagstaff, AZ 86011, USA}
\email{\href{mailto:michael.falk@nau.edu}{michael.falk@nau.edu}}
\urladdr{\href{http://www.cefns.nau.edu/~falk/}%
{www.cefns.nau.edu/\char'176falk}}

\author[A. Varchenko]{Alexander Varchenko$^3$}
\address{Department of Mathematics, University of
North Carolina at Chapel Hill\\
Chapel Hill, NC 27599, USA}
\email{\href{mailto:anv@email.unc.edu}{anv@email.unc.edu}}
\urladdr{\href{http://www.math.unc.edu/Faculty/av/}
{www.math.unc.edu/Faculty/av/}}
\thanks{{$^3$}Partially supported by NSF grant DMS-0555327}

\dedicatory{in memory of V. I. Arnol'd}

\keywords{arrangement of hyperplanes, master function, critical locus, resonance variety, Orlik-Solomon algebra, tropicalization}
\subjclass[2010]{Primary
32S22,   
Secondary
55N25,  
52C35,   
14T04
}

\begin{abstract} 
Let \A\ be an 
affine hyperplane arrangement 
in $\C^\ell$ with complement $U$. Let $f_1,\ldots, f_n$ be linear polynomials defining the hyperplanes of \A, and $\AA$ the algebra of differential forms generated by the one-forms $\d \log f_1,\ldots, \d \log f_n$. To each $\la \in \C^n$ we associate the master function $\Phi = \prod_{i=1}^n f_i^{\la_i}$ on $U$ and the closed logarithmic one-form $\omega= \d \log \Phi$.  We assume $\omega$ is a general element of a rational linear subspace $D$ of $A^1$ of dimension $q>1$ such that the map $\bigwedge^k(D) \to A^k$ given by multiplication in $A^\cdot$ is zero for all $p<k\leq q$, and is nonzero for $k=p$.  With this assumption, we prove the critical locus $\crit(\Phi)$ of $\Phi$ has components of codimension at most $p$, and these are intersections of level sets of $p$ rational master functions. We give conditions that guarantee $\crit(\Phi)$ is nonempty and every component has codimension equal to $p$, in terms of syzygies among polynomial master functions. 

If \A\ is $p$-generic,  
then $D$ is contained in the degree $p$ resonance variety $\R^p(\A)$  --  in this sense the present work complements previous work on resonance and critical loci of master functions.  Any arrangement is $1$-generic; we give a precise description of  $\crit(\Phi_\la)$ in case $\la$ lies in an isotropic subspace $D$ of $A^1$, using the multinet structure on \A\ corresponding to $D\subseteq \R^1(\A)$.  This is carried out in detail for the Hessian arrangement. Finally, for arbitrary $p$ and \A, we establish necessary and sufficient conditions for a set of integral one-forms to span such a subspace, in terms of nested sets of \A, using tropical implicitization.
\end{abstract}
\maketitle

\begin{section}{Introduction}
Let $\A=\{H_1,\ldots, H_n\}$ be an arrangement of distinct affine hyperplanes in $\C^\ell$, with complement $U=\C^\ell - \bigcup_{i=1}^nH_i$. Choose a linear polynomial $f_i$ with zero locus $H_i$, for each $i$. Let $\lambda=(\lambda_1\ldots, \lambda_n)\in \C^n$, and consider the {\em master function} 
\[
\Phi_\lambda = \prod_{i=1}^n f_i^{\lambda_i}.
\]
The multi-valued function $\Phi_\lambda$ has a well-defined critical locus 
\[
\crit(\Phi_\la) =\{x\in U \mid \d\Phi_\la(x)=0\}.
\]
Indeed, $\crit(\Phi_\la)$ coincides with the zero locus $V(\omega_\la)$ of the single-valued closed logarithmic one-form
\[
\omega_\la=\d\log(\Phi_\la)=\sum_{i=1}^n \lambda_i \d \log(f_i).
\]
In particular, $\crit(\Phi_\la)$ is unchanged if $\la$ is multiplied by a non-zero scalar.  We are interested in the relation between $\crit(\Phi_\la)$ and algebraic properties of the cohomology class represented by $\omega_\la$ in $H^1(U,\C)$. 

For certain arrangements \A\ and weights \la, the critical points of $\Phi_\la$ yield a complete system of eigenfunctions for the commuting hamilitonians of the $\mathfrak{sl}_n(\C)$-Gaudin model, via the Bethe Ansatz \cite{RV95, ScherbVar03, MukVar05}.
That application was the origin of the term master function, introduced in \cite{Var03}.  Much of that theory depends only on combinatorial properties of arrangements, and can be formulated in that general setting -- see \cite{Var10}.

Let $\AA$ denote the graded \C-algebra of holomorphic differential forms on $U$ generated by  $\{\d \log(f_i) \mid 1\leq i \leq n\}$. By a well-known result of Brieskorn, the inclusion of $\AA$ into the de Rham complex of $U$ induces an isomorphism in cohomology, and thus $\AA\cong H^\cdot(U,\C)$, see \cite{A69,Bries73}. In particular $A^1 \cong \C^n$. Since $\omega_\la\we \omega_\la =0$, left-multiplication by $\omega_\la$ makes $\AA$ into a cochain complex $(\AA,\omega_\la)$.  For generic $\la$, $H^p(\AA,\omega_\la)=0$ for $p<\ell$, and $\dim H^\ell(\AA,\omega_\la)=|\chi(U)|$, see \cite{SV91,Yuz95}.  At the same time, for generic $\la$, $\Phi_\lambda$ has $|\chi(U)|$ isolated, nondegenerate critical points \cite{Var95, OT95,Silv96}. Here we are concerned with $H^p(\AA,\omega_\la)$ and $\crit(\Phi_\la)$ when $\la$ is not generic.

For $p<\ell$, the $\omega$ for which $H^p(\AA,\omega)\neq 0$ comprise the $p^{\rm th}$ {\em resonance variety} $\R^p(\A)$ of \A, a well-studied invariant of $\AA$.  On the other hand, precise conditions on $\la$ guaranteeing that $\Phi_\la$ has $|\chi(U)|$ isolated critical points are not known. There are also examples where the critical points of $\Phi_\la$ are isolated but degenerate.

In some cases $\crit(\Phi_\la)$ is positive-dimensional. If \A\ is a discriminantal arrangement, in the sense of \cite{SV91}, then for certain 
 choices of integral weights $\la$ arising from a simple Lie algebra 
 $\mathfrak g$, $\crit(\Phi_\la)$ has components of the same positive
 dimension     \cite{ScherbVar03,MukVar04,MukVar05b}. In the particular case $\mathfrak g=\mathfrak{sl}_2(\C)$
 of this situation, 
the codimension of $\crit(\Phi_\la)$ is $\ell-1$.  In this case,  
 it was shown in \cite{CV} that $\omega_\la \in \R^{\ell-1}(\A)$ for these $\la$, with the rank of the skew-symmetric part of $H^{\ell-1}(\AA,\omega_\la)$ equal to 
the number of components of   $\crit(\Phi_\la)$.
%

Our work in  \cite{CDFV08} provides a weak generalization of these results. There we study the {\em universal critical set}, the set $\Sigma$ of pairs $(x,a)$ such that $x \in V(\omega_a)$. For fixed $\lambda$, $\crit(\Phi_\la)$ is the $a=\la$ slice of $\Sigma$. Let $\bar{\Sigma}$ be the Zariski closure of $\Sigma$ in $\C^\ell \times \C^n$, and $\bar{\Sigma}_\la$ the $a=\la$ slice of $\bar{\Sigma}$. In \cite{CDFV08} we show, if $\omega_\la \in \R^p(\A)$, then $\bar{\Sigma}_\la$ has codimension at most $p$, provided \A\ is tame and either $p\leq 2$ or \A\ is free. See \cite{CDFV08} for definitions of free and tame arrangements; any affine arrangement in $\C^2$ is tame. It is not true in general that $\bar{\Sigma}_\la$ is the closure of $\Sigma_\la$. Indeed, $\Sigma_\la$ may be empty under the given hypotheses - that is, $\bar{\Sigma}_\la \subseteq \C^\ell \times \C^n$ may lie over $\bigcup_{i=1}^n H_i$.

In this paper, we obtain somewhat more precise information on $\crit(\Phi_\la)$, for more general arrangements, but impose a different hypothesis on $\omega_\la$. Namely, we assume that $\omega_\la$ has a {\em decomposable cocycle}, that is,  there exists $\psi\in A^p$ such that $\omega_\la \we \psi=0$, and $\psi$ is a product of $p$ elements of $A^1$, whose linear span does not include $\omega_\la$.

We say a subspace $D$ of $A^1$ is {\em singular} if the multiplication map $\bigwedge^q(D) \to A^q$ is zero, where $q=\dim D$. Let $p$ be maximal such that $\bigwedge^p(D) \to A^p$ is not the zero map. If $\omega = \d \log(\Phi) \in D-\{0\}$, then $\omega$ can be included in a basis of $D$, and any $p$-fold product of the other basis elements is a decomposable $p$-cocycle for $\omega$.

We assume that $D$ is a rational subspace of $A^1$, that is, $D$ has a basis $\Lambda=\{\omega_{\xi_1},\ldots,\omega_{\xi_q}\}$ with each $\omega_{\xi_j}$ an integer linear combination of $\d\log(f_1), \ldots, \d\log(f_n)$.  Associated to $\Lambda$ is a rational mapping $\Phi_\Lambda =(\Phi_{\xi_1}, \ldots, \Phi_{\xi_q}) \colon \C^\ell \rightarrowtail\C^q$, whose image is a quasi-affine subvariety $Y=Y_\Lambda$ of $\C^q$. The dimension of $Y$ is $p$.  If $\omega=\d \log(\Phi) \in D$, then the critical locus $\crit(\Phi)$ is consists of fibers of $\Phi_\Lambda$ and singular points of $\Phi_\Lambda$. In particular, for generic $\omega \in D$, $\crit(\Phi)$ has codimension at most $\dim(Y)=p$.  We obtain more precise conclusions in case the projective closure $\bar{Y}$ is a curve ($p=1$) or a hypersurface ($p=q-1$), or $\bar{Y}$ is nonsingular and meets the coordinate hyperplanes transversely. If $\bar{Y}$ is linear, of any codimension, with $Y=\bar{Y} \cap (\C^*)^q$ and $\Phi_\Lambda$ nonsingular, we get a complete description of the $\crit(\Phi)$ for $\d \log(\Phi)\in D$, in terms of critical loci of master functions on the complement of the rank-$p$ arrangement cut out on $\bar{Y}$ by the coordinate hyperplanes.

Every component of $\R^1(\A)$ is a rationally-defined and isotropic linear subspace \cite{LY00}, and every element of $\R^1(\A)$ has a decomposable cocycle. Moreover, by the theory of multinets and \Ceva pencils \cite{FY07}, we can choose $\Lambda$ so that the variety $Y_\Lambda$ corresponding to a component of $\R^1(\A)$ is linear. We carry out the entire analysis in detail in this case, with special attention to the Hessian arrangement, the one case we know for which $Y_\Lambda$ is not a hypersurface.


Our approach lends itself to tropicalization, using the main result of \cite{DFS07}. Using the nested set subdivison of the Bergman fan \cite{FeStu05}, we derive a rank condition for a product of integral one-forms $\omega_{\xi_1} \we \cdots \we \omega_{\xi_q}$ to vanish.
The rank condition can be used in case \A\ is $p$-generic to give a combinatorial description of the $(p+1)$-tuples of integral forms in $A^1$ whose product vanishes, analogous to the description of $\R^1(\A)$ in terms of neighborly partitions -- see \cite{BFW11}.

The outline of this paper is as follows. In  Section 2 we introduce Orlik-Solomon algebras and resonance varieties, prove a general result about zero loci of differential forms, and compute critical loci directly for some examples, including the Hessian arrangement. In Section 3 we consider logarithmic one-forms with decomposable $p$-cocycles satisfying the rationality criterion above, obtaining a precise description of their zero loci, especially in case $\Phi_\Lambda$ is nonsingular and $\bar{Y}_\Lambda$ is a hypersurface meeting the coordinate hyperplanes transversely. We revisit the examples from Section 2. In Section 4 we treat the case where $\bar{Y}_\Lambda$ is linear, returning to the example of the Hessian arrangement. In Section 5 we formulate a test for existence of decomposable cocycles using tropical implicitization.
\end{section}

\begin{section}{Resonance, vanishing products, and zeros of one-forms}

It will be more convenient for us to consider arrangements of projective hyperplanes in complex projective space $\P^\ell$. Let $[x_0: \cdots : x_\ell]$ be homogeneous coordinates on $\P^\ell$, and let $\alpha_i \colon \C^{\ell+1} \to \C$ be a nonzero homogeneous linear form, for $0\leq i\leq n$. Assume without loss that $\alpha_0(x)=x_0$. Let $H_i=\ker(\alpha_i)$, considered as a projective hyperplane in $\P^\ell$, and let $\A=\{H_0,\ldots, H_n\}$. We will denote the corresponding linear hyperplanes in $\C^{\ell+1}$ by $cH_i$, comprising the central arrangement $c\A=\{cH_0, \ldots, cH_n\}$. Let $U=\P^\ell -\bigcup_{i=0}^n H_i$. We identify $[1:x_1:\cdots :x_\ell] \in \P^\ell - H_0$ with $(x_1,\ldots, x_\ell)\in \C^\ell$,  and set
\[
f_i(x_1,\ldots, x_\ell)=\alpha_i(1,x_1,\ldots, x_\ell)
\]
for $1\leq i\leq n$. Then we recover the affine arrangement \A\ of the Introduction, with the same complement $U$. In $\P^\ell$, $U$ is the complement of the singular projective hypersurface defined by 
\[
Q=\prod_{i=0}^n \alpha_i,
\]
the (homogeneous) {\em defining polynomial} of \A. 

\subsection{The projective Orlik-Solomon algebra}
Let $\Omega^\cdot(U)$ be the complex of holomorphic differential forms on $U$. The {\em Orlik-Solomon algebra} of \A\ is the subalgebra $\AA(\A)$ of $\Omega^\cdot(U)$ generated by $\d \log(f_i),  \ 1\leq i\leq n$, as in the Introduction.

We will also study $\AA(\A)$ in homogeneous coordinates. Let $\omega_i= \d \log(\alpha_i)$ for $0\leq i\leq n$, and let $\AA(c\A)$ be the algebra of holomorphic forms on $\C^{\ell+1}$ generated by $\omega_0,\ldots, \omega_n$. Define $\partial \colon \AA(c\A) \to \AA(c\A)$ by 
\[
\partial(\omega_{i_1}\we \cdots \we \omega_{i_k}) = \sum_{j=1}^k (-1)^{j-1} \omega_{i_1}\we \cdots \we \widehat{\omega}_{i_j} \we \cdots \we \omega_{i_k}
\]
and extending linearly. Then $\partial$ is a graded derivation of degree -1, and  
\[
\partial \bigl(\sum_{i=0}^n c_i \omega_i\bigr)=\sum_{i=0}^n c_i.
\]  

In general, a holomorphic $p$-form on $\C^{\ell+1}-\{0\}$ descends to a well-defined form on $\P^\ell$ if and only if it is $\C^*$-invariant and its contraction along the Euler vector field $\sum_{i=0}^n x_i \frac{\partial}{\partial x_i}$ vanishes, see \cite{Dimca92}.  This contraction, on $\AA(c\A)$, is given by $\partial$, and $\AA(c\A)$ consists of $\C^*$-invariant forms. Then we may identify $\AA(\A)$ with the subalgebra $\ker(\partial)$ of $\AA(c\A)$. This is easily seen to coincide with the subalgebra of $\AA(c\A)$ generated by $\ker(\partial)\cap A^1(c\A)$.

With our choice of coordinates, $\d \log(f_i) = \omega_i-\omega_0$ under this identification. $\{\omega_1-\omega_0, \ldots, \omega_n - \omega_0\}$ generates \AA\ by the remark above. Also, $(\AA(c\A),\partial)$ is an exact complex, so that $\im(\partial)=\ker(\partial)=\AA(\A)$,  see \cite{OT92}.

There is a well-known presentation of $\AA(c\A)$ as a quotient of the exterior algebra $E^\cdot=\bigwedge^\cdot(e_0, \ldots, e_n)$. For $C=\{i_1,\ldots, i_k\} \subseteq \{0, \ldots,n\}$, write $e_C=e_{i_1} \cdots  e_{i_k}\in E^k$. Say $C$ is a {\em circuit} of $c\A$ if $C$ is minimal with the property that 
\[
\codim \bigcap_{j\in C} H_j < |C|.
\]
 Then $\AA(c\A)$ is isomorphic to $E^\cdot/I$, where 
 \[
 I=\bigl(\partial e_C \mid C \ \text{is a circuit of} \ c\A \bigr).
 \]

\subsection{Resonance varieties} Let $\omega = \sum_{i=0}^n \la_i \omega_i$, where $\la=(\la_0,\ldots, \la_n) \in \C^{n+1}$. Assume that $\partial \omega =\sum_{i=0}^m \la_i =0$. Since $\d \log(f_i) = \omega_i-\omega_0$, $\omega=\sum_{i=1}^n \lambda_i\, \d \log(f_i)$. 

Then $\omega \in A^1$, and $\omega \we \omega=0$, so we obtain a cochain complex

\[
0 \longrightarrow A^0  \xlongrightarrow{\omega \we -} \cdots \xlongrightarrow{\omega \we -} A^p \xlongrightarrow{\omega \we -} \cdots \xlongrightarrow{\omega \we -} A^\ell \longrightarrow 0. 
\]

Let 
\begin{align*}
Z^p(\omega)&=\{\psi \in A^p \mid \omega \we \psi=0\},\\
B^p(\omega)&=\{\psi \in A^p \mid \psi=\omega \we \phi \ \text{for some} \ \phi \in A^{p-1}\}, \ \text{and} \\
H^p(\AA,\omega)&=Z^p(\omega)/B^p(\omega).
\end{align*}
Then
\[
\R^p(\A)=\{ \omega \in A^1 \mid H^p(\AA,\omega)\neq 0\}
\]
is, by definition, the $p^{\rm th}$ {\em resonance variety} of \A.

As observed in the Introduction, 
\[
\omega=\d \log(\Phi)=\frac{\d \Phi}{\Phi},
\]
where $\Phi=\prod_{j=1}^n f_j^{\la_j}$, and $\crit(\Phi)$ coincides with the zero locus of $\omega$. In homogeneous coordinates, $\Phi$ is given by $\prod_{j=0}^n \alpha_j^{\la_j}$. 

\subsection{Zeros of forms} We start with an elementary observation about products and zeros of differential forms. If $\psi\in \Omega^k(U)$, for 
some $k$, $0 \leq k \leq \ell$, let 
\[
V(\psi)=\{x \in U \mid \psi(x)=0\},
\]
a quasi-affine subvariety of $\C^\ell$. Let $U(\psi)=U - V(\psi)$.

\begin{proposition} Suppose $\omega\in \Omega^1(U)$ and $\psi \in \Omega^p(U)$ satisfy $\omega \we \psi = 0$. Then every component of $V(\omega) - V(\psi)$ has codimension less than or equal to $p$. 
\label{prop:zeros}
\end{proposition}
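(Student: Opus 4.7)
My plan is to reduce to a local question at each point $x_0 \in V(\omega) - V(\psi)$. Every irreducible component of $V(\omega) - V(\psi)$ is an open subset of an irreducible component of $V(\omega)$ that is not contained in $V(\psi)$, so it contains such an $x_0$. It therefore suffices to prove that the component of $V(\omega)$ through $x_0$ has codimension at most $p$ near $x_0$.

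The key linear-algebra input is: for a nonzero $p$-covector $\eta \in \bigwedge^p T^*_{x_0} U$, the annihilator $\set{\alpha \in T^*_{x_0} U \mid \alpha \we \eta = 0}$ has dimension at most $p$. I would prove this by the classical division argument: any basis $\alpha_1, \ldots, \alpha_k$ of the annihilator extends to a basis of $T^*_{x_0} U$, and expanding $\eta$ in the associated monomial basis forces every nonvanishing term of $\eta$ to contain $\alpha_1 \we \cdots \we \alpha_k$ as a factor, so $k \leq p$.

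Applying this fact to $\eta = \psi(x_0) \neq 0$ shows that the $\mathcal{O}_U$-linear bundle map $\phi \colon \Omega^1_U \to \Omega^{p+1}_U$ defined by $\phi(\alpha) = \alpha \we \psi$ has kernel of fiber dimension at most $p$ at $x_0$, hence rank at least $\ell - p$ there. Lower semicontinuity of rank yields an open neighborhood $N$ of $x_0$ on which $\phi$ still has rank at least $\ell - p$. Choosing local coordinates $(y_1, \ldots, y_\ell)$ on $N$ and writing $\omega = \sum_{i=1}^\ell g_i \, \d y_i$, the identity $\omega \we \psi = 0$ becomes a system of $\mathcal{O}_N$-linear relations on $g_1, \ldots, g_\ell$ whose coefficient matrix has rank $\ell - p$ at $x_0$. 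After shrinking $N$, we can invert an $(\ell-p) \times (\ell-p)$ minor to solve for $\ell - p$ of the $g_i$ in terms of the remaining $p$; then $V(\omega) \cap N$ is the vanishing locus of only $p$ holomorphic functions, and Krull's height theorem delivers codimension at most $p$.

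I expect the main obstacle is the linear-algebra input, since the entire argument rests on the annihilator bound $\dim \ker \phi(x_0) \leq p$ and must use only $\psi(x_0) \neq 0$ without any hidden decomposability assumption on $\psi$. The subsequent steps — semicontinuity of rank, inverting a minor, invoking Krull's theorem — are standard once that fact is in hand.
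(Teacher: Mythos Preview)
Your argument is correct and follows essentially the same approach as the paper: locally where $\psi\neq 0$, the relation $\omega\wedge\psi=0$ lets one express all coefficients of $\omega$ in terms of $p$ of them, so $V(\omega)$ is cut out there by $p$ functions and Krull's theorem finishes. The paper's version is only cosmetically more concrete---rather than the annihilator bound and a rank/minor argument, it covers $U-V(\psi)$ by the sets $U_I=\{A_I\neq 0\}$ and, for each $i\notin I$, uses the single relation $\sum_{j\in J}\sigma(j,J)\,b_j\,A_{J\setminus\{j\}}=0$ with $J=I\cup\{i\}$ to solve directly for $b_i$ in terms of $b_{i_1},\ldots,b_{i_p}$.
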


\begin{proof} We may write $\omega=\sum_{i=1}^\ell b_i\d x_i$ for
some holomorphic functions $b_1, \ldots b_\ell$ on $U$. Then 
\[
V(\omega)=\bigcap_{i=1}^\ell V(b_i).
\]
Similarly, $\psi=\sum_{I}A_I\d x_I$ for some holomorphic functions 
$A_I$, where $I$ ranges over all subsets $I=\{i_1,\ldots, i_p\}_<$ of
$\{1,2,\ldots,\ell\}$, and $\d x_I=\d x_{i_1}\wedge\cdots\wedge \d x_{i_p}$.  (The subscript ``$<$'' is meant to indicate that the elements of $I$ are listed in increasing order.) Set $U_I=U(A_I)$, and let $S_I$ denote the coordinate ring of $U_I$, i.e., $S_I$ is $\C[x_1,\ldots, x_\ell]$, localized at $A_I$. Then 
\[
U(\psi)=\bigcup_I \, U_I.
\]

The equation $\omega\we \psi=0$ says, for each subset  $J$ of $\{1,\ldots,\ell\}$ of size $p+1$,
\begin{equation}
\sum_{i\in J}\sigma(i,J) b_iA_{J-\{i\}}=0.
\label{eqn:one}
\end{equation}
Here $\sigma(i,J)=\pm 1$ depending on the position of $i$ in $J$. 

We have 
\[
V(\omega)\cap U(\psi)=\bigcup_I V(\omega) \cap U_I.
\]

Fix $I=\{i_1,\cdots, i_p\}_<$. For each $i\not \in I$, set $J=I\cup\{i\}$ in equation \eqref{eqn:one}. Since $A_I\neq 0$ on $U_I$, one can solve for $b_i$ in terms of $b_{i_1}, \ldots, b_{i_p}$. This means $b_i$ lies in the ideal $(b_{i_1},\ldots, b_{i_p})$ of $S_I$. Since this holds for every $i\not \in I$, the defining ideal of $V(\omega) \cap U_I$  in $S_I$ is contained in $(b_{i_1},\ldots, b_{i_p})$. Then each irreducible component of $V(\omega) \cap U_I$ has codimension less than or equal to the codimension of $(b_{i_1},\ldots, b_{i_p})$, which is at most $p$.  Since the $U_I$ cover  $U(\psi)$, the result follows.
\end{proof}

\begin{corollary} If 
\[
\bigcap \{V(\psi) \mid \psi \in \Omega^p(U), \omega \we \psi =0\} =\emptyset,
\]
then every component $V(\omega)$ has codimension less than or equal to $p$. 
\label{cor:zeros}
\end{corollary}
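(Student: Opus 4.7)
The plan is to deduce the corollary from Proposition~\ref{prop:zeros} by a pointwise covering argument. Let $C$ be an arbitrary irreducible component of $V(\omega)$, and I will show $\codim C \leq p$.

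First I would pick any point $x\in C$. By the hypothesis that $\bigcap\{V(\psi) : \omega\we \psi=0\}=\emptyset$, there exists some $\psi\in \Omega^p(U)$ with $\omega\we \psi=0$ and $x\notin V(\psi)$. In particular, $C\not\subseteq V(\psi)$, so $C\cap U(\psi)$ is a nonempty open subvariety of the irreducible variety $C$, and therefore $C\cap U(\psi)$ is itself irreducible of the same dimension (equivalently codimension) as $C$ in $U$.

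Next, observe that the irreducible components of $V(\omega)-V(\psi) = V(\omega)\cap U(\psi)$ are precisely the nonempty sets of the form $C'\cap U(\psi)$ as $C'$ runs over the irreducible components of $V(\omega)$, and each such component has the same codimension in $U$ as the corresponding $C'$. In particular, $C\cap U(\psi)$ is a component of $V(\omega)-V(\psi)$. By Proposition~\ref{prop:zeros} applied to this $\omega$ and $\psi$, its codimension is at most $p$, and hence $\codim C \leq p$.

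Since $C$ was an arbitrary component of $V(\omega)$, this proves the corollary. The argument is essentially formal and I do not foresee any technical obstacle: the only subtle point is the bookkeeping about irreducible components passing to open subvarieties, which is immediate from irreducibility.
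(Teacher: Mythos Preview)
Your argument is correct and is precisely the intended deduction from Proposition~\ref{prop:zeros}; the paper leaves the corollary unproved because this pointwise covering and passage-to-open-subset argument is exactly what one is meant to supply. The only thing worth noting is that your bookkeeping about components is equivalent to the contrapositive formulation in Corollary~\ref{cor:badcomp}, but since the paper states \ref{cor:zeros} first, your direct route is the natural one.
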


\begin{corollary} Suppose $X$ is a component of $V(\omega)$ of codimension $c$. 
If $\psi$ is a $p$-form satisfying $\omega\we\psi=0$ and $p<c$, then $X\subseteq V(\psi)$.
\label{cor:badcomp}
\end{corollary}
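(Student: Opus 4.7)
The plan is to obtain this as the contrapositive of Proposition~\ref{prop:zeros}: if $X$ is \emph{not} contained in $V(\psi)$, then Proposition~\ref{prop:zeros} should force the codimension of $X$ to be at most $p$, contradicting $p < c$.

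First I would suppose for contradiction that $X \not\subseteq V(\psi)$, and observe that $X \setminus V(\psi) = X \cap U(\psi)$ is then a nonempty open subset of the irreducible variety $X$, hence itself irreducible and of the same codimension $c$ in $U$. Next I would identify this subset as an irreducible component of $V(\omega) \setminus V(\psi)$: decomposing $V(\omega) = \bigcup_i X_i$ into irreducible components, one has $V(\omega) \setminus V(\psi) = \bigcup_i (X_i \setminus V(\psi))$, and each non-empty summand is an open dense subset of the corresponding $X_i$, hence irreducible of the same codimension as $X_i$. Applying Proposition~\ref{prop:zeros} to the component $X \setminus V(\psi)$ then yields $c \le p$, contradicting the hypothesis $p < c$.

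The substance of the argument is entirely carried by Proposition~\ref{prop:zeros}; the only thing that needs checking is the standard fact that removing a closed subvariety from an irreducible set leaves either the empty set or an open dense subset of the same dimension. Thus there is no real obstacle --- the corollary is essentially a reformulation of Proposition~\ref{prop:zeros}.
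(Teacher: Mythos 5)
Your argument is correct and is exactly the reasoning the paper intends: the corollary is stated without proof as an immediate consequence of Proposition~\ref{prop:zeros}, namely that if $X\not\subseteq V(\psi)$ then $X\setminus V(\psi)$ is a (dense, hence codimension-$c$) component of $V(\omega)-V(\psi)$, forcing $c\le p$. The only point you gloss over, maximality of $X\setminus V(\psi)$ among the pieces $X_i\setminus V(\psi)$, follows by taking closures and using that $X$ is a component of $V(\omega)$, so there is no gap.
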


\begin{remark} The preceding results go through without change for any smooth complex analytic variety $U$, interpreting $x_1,\ldots, x_\ell$ as local holomorphic coordinates on $U$.
\label{rmk:zeros}
\end{remark}

A $p$-form $\psi$ satisfying $\omega \we \psi=0$ will be called a $p$-{\em cocycle} for $\omega$. We say $\psi$ is {\em trivial} if $\psi=\omega \we \phi$ for some $\phi \in \Omega^{p-1}(U)$. If $\psi$ is a trivial cocycle for $\omega$, then $V(\omega) \subseteq V(\psi)$. 

The trivial cocycle condition $\psi=\omega \we \phi$ is generally difficult to characterize. We propose the following conjecture, the converse to the observation above.

\begin{conjecture}
If $\psi\in A^p$, then $\psi=\omega\we \phi$ for some $\phi \in A^{p-1}$ if and only if $\omega\we \psi=0$ and $V(\omega)\subseteq V(\psi)$. 
\end{conjecture}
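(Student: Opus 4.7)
The forward direction is immediate: if $\psi=\omega\we\phi$, then $\omega\we\psi=\omega\we\omega\we\phi=0$ in $\AA$, and $\psi$ vanishes wherever $\omega$ does, so $V(\omega)\subseteq V(\psi)$.

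For the converse, my plan is to reformulate the statement cohomologically. The hypothesis $\omega\we\psi=0$ says $\psi\in Z^p(\omega)$, and the desired conclusion is exactly $[\psi]=0$ in $H^p(\AA,\omega)$. Thus the conjecture asserts that the geometric containment $V(\omega)\subseteq V(\psi)$ detects coboundaries in the Aomoto complex. When $\omega\notin\R^p(\A)$ one has $H^p(\AA,\omega)=0$ and there is nothing to prove, so the substance lies in the case $\omega\in\R^p(\A)$.

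The proposed approach is to induct on $p$ and exploit the component structure of resonance. For $p=1$, each component of $\R^1(\A)$ is rationally defined and isotropic by \cite{LY00}, and by the multinet/\Ceva pencil theory of \cite{FY07} there is an associated rational pencil $\Phi_\Lambda\colon U\dashrightarrow\CP^1$; any $\psi\in Z^1(\omega)$ must lie in the corresponding isotropic subspace $D\ni\omega$. One would try to pair the description of $V(\omega)$ as a union of fibers of $\Phi_\Lambda$ with singular points against the observation that a $\psi\in D$ not proportional to $\omega$ degenerates over a different collection of fibers, so that $V(\omega)\subseteq V(\psi)$ forces $\psi$ to be a scalar multiple of $\omega$, giving $\psi=\omega\we\phi$ with $\phi\in A^0$. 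For general $p$, Section~3 identifies decomposable cocycles in $Z^p(\omega)$ with pullbacks along rational maps $\Phi_\Lambda\colon U\dashrightarrow Y$, and the hope is to write $\psi=\Phi_\Lambda^*(\eta)$ for some $\eta\in\Omega^p(Y)$, use the containment of zero loci on each fiber to factor $\eta=\eta_0\we\nu$ on $Y$, and pull back to produce the primitive $\phi$.

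The main obstacle is that the hypothesis is a geometric containment inside $U$ while $\AA$ is a combinatorial object, and translating between the two is precisely what is hard. Three difficulties look particularly serious. First, when $V(\omega)=\emptyset$ the hypothesis is vacuous and the conjecture reduces to the much stronger assertion that $H^p(\AA,\omega)=0$ whenever $\omega$ has no zeros on $U$; I see no general mechanism forcing this, and one would need a result in the spirit of Esnault–Schechtman–Viehweg but keyed to zero loci rather than residue conditions. Second, the classification of components of $\R^p(\A)$ for $p>1$ is incomplete, so the inductive step lacks a robust structure theorem to invoke. Third, when $\omega$ lies in the intersection of several resonance components, $\psi$ may split across them, and matching the zero loci summand-by-summand would require delicate residue analysis along the exceptional divisors of a suitable wonderful compactification, quite possibly guided by the tropical criterion developed in Section~5.
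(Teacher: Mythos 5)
The statement you are attempting is left as an open conjecture in the paper: the authors give no proof of it, remarking only that the case $p=1$ ``is not hard to prove directly'' and that for generic $\omega\in A^1$ the statement follows from results of \cite{Var10}. So there is no argument in the paper to compare yours against; the question is whether your proposal closes the gap, and it does not. The forward implication you establish is precisely the observation the paper makes immediately before stating the conjecture (a trivial cocycle $\psi=\omega\we\phi$ automatically satisfies $\omega\we\psi=0$ and $V(\omega)\subseteq V(\psi)$). The entire content of the conjecture is the converse, and there you offer a plan rather than a proof, with the decisive steps missing.

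Concretely, the sketch breaks at the points you partly flag yourself. For $p=1$ you want $V(\omega)\subseteq V(\psi)$ to force $\psi\in\C\omega$; but when $V(\omega)=\emptyset$ --- which genuinely occurs, e.g.\ $\omega_{abc}$ in Example~\ref{ex:braid} with one of $a,b,c$ equal to zero --- the containment is vacuous and no proportionality can be extracted from it, so your ``first obstacle'' is not a side case but the essential difficulty: in that situation a proof must show directly that $Z^p(\omega)=B^p(\omega)$, and no mechanism for this is proposed. Moreover the identification $Z^1(\omega)=D$ with a single isotropic (multinet) subspace is not automatic: if $\omega$ lies on more than one component of $\R^1(\A)$, or if local components contribute, cocycles need not live in one isotropic subspace, so the reduction to a single \Ceva pencil is unjustified. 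For general $p$ the key step ``write $\psi=\Phi_\Lambda^*(\eta)$ for some $\eta\in\Omega^p(Y)$'' has no basis: an arbitrary $\psi\in Z^p(\omega)$ need not be decomposable, even modulo coboundaries --- Example~\ref{ex:nondecomp} exhibits a nontrivial $2$-cocycle for an $\omega$ that admits no decomposable cocycle at all --- so the geometric machinery of Section~3, which only sees products of logarithmic one-forms pulled back along $\Phi_\Lambda$, cannot reach all cocycles. In short, you have proved the easy half and given an honest catalogue of why the hard half is open; that accurately reflects the statement's status as a conjecture, but it is not a proof.
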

The conjecture is not hard to prove directly in case $p=1$, and the statement for any $p$ follows from results of \cite{Var10} if $\omega \in A^1$ is generic. 






%

\subsection{Examples} Our first example is linearly equivalent to the rank-three braid arrangement.

\begin{example}
\label{ex:braid}
Let $\A=\{H_0,\ldots, H_5\}$ be the arrangement with defining polynomial
\[
Q=xyz(x-y)(x-z)(y-z),
\]
with the hyperplanes labelled according to the order of factors in $Q$.

For $a,b,c \in \C$, not all zero, with $a+b+c=0$, let 
\[
\Phi_{abc}=[x(y-z)]^a[y(x-z)]^b[z(x-y)]^c.
\]
Then  $\omega_{abc}:=\d \log(\Phi_{abc})=a(\omega_0+\omega_5) + b(\omega_1+\omega_4) + c(\omega_2+\omega_3)$.

One computes  
\[
\begin{split}
\omega_{abc}&= \left[\d x\,\d y\,\d z\right]\left[\begin{array}{c}b_1\\b_2\\b_3\end{array}\right] \\
& = \left[\d x\,\d y\,\d z\right]\left[\begin{array}{ccc}
1/x & 1/(x-z) & 1/(x-y)\\
1/(y-z) & 1/y & 1/(y-x)\\
1/(z-y) & 1/(z-x) & 1/z\\
\end{array}\right]\left[\begin{array}{c}a\\b\\c\end{array}\right].
\end{split}
\]
The zero locus $V(\omega_{abc})$ is given by the vanishing of $b_1, b_2$, and $b_3$. The kernel of the matrix is spanned by $(x(y-z),y(z-x),z(x-y))$, so
$[x:y:z]\in V(\omega_{abc})$ if and only if $[x(y-z):y(z-x):z(x-y)]=[a:b:c]$. Since $a+b+c=0$, this is equivalent to 
$[x(y-z):y(z-x)]=[a:b]$, i.e., 
\[
\frac{x(y-z)}{y(z-x)}=\frac{a}{b}
\]
or, more symmetrically, 
\[
\frac{a}{x}+\frac{b}{y}+\frac{c}{z}=0.
\]
If any of $a, b$, or $c$ are zero, then $\crit(\Phi_{abc})=V(\omega_{abc})$ is empty. Otherwise $\crit(\Phi_{abc})$  has codimension $1$. Moreover, $\crit(\Phi_\la)$ is a level set of the master function $\frac{x(y-z)}{y(z-x)}$.

Let $D=\{\omega_{abc} \mid a+b+c=0\}$.
Then $D \subseteq A^1$ and, for any $\omega \in D$, $Z^1(\omega)=D$.
Thus $\psi\in Z^1(\omega_{abc})$ if and only if $\psi=  \d\log\Phi_{a'b'c'}$ with $a'+b'+c'=0$.
Then $\psi$ has zero locus
given by 
\[
\frac{a'}{x}+\frac{b'}{y}+\frac{c'}{z}=0,
\]
which one can see is disjoint from  
$V(\omega_{abc})$ if and only if $\psi\not \in B^1(\omega_{abc})$. The Zariski closure of every nonempty critical set contains the four points $[0:0:1]$, $[1:1:1]$, $[1:0:0]$, and $[0:1:0]$ - see Figure~\ref{fig:braidpic}.

\begin{figure} 
\includegraphics[width=6cm]{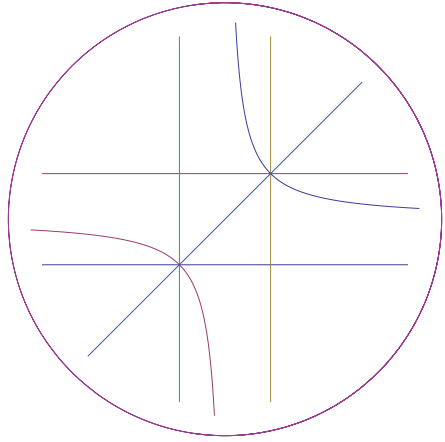}
\caption{The braid arrangement with $\crit(\Phi_{1,1,-2})$.}

\label{fig:braidpic}
\end{figure}

\end{example}

Here is a rank-four example.
\begin{example} \label{ex:nondecomp}
Let $\A = \{H_0,\ldots, H_7\}$ be the arrangement of eight planes in $\P^3$ with defining polynomial 
\[
Q=xyzw(x+y+z)(x+y+w)(x+z+w)(y+z+w).
\]
The dual point configuration consists of the four vertices and four face-centers of the 3-simplex. Fix $a,b,c,d \in \C$ and let 
\[
\Phi=\bigg({x\over y+z+w}\bigg)^a\bigg({y\over x+z+w}\bigg)^b
\bigg({z\over x+y+w}\bigg)^c\bigg({w\over x+y+z}\bigg)^d.
\]

The one-form $\omega=\d\log\Phi$ belongs to a $4$-dimensional component of $\R^2(\A)$. If none of $a,b,c,d$ are zero, then $H^1(A,\omega)=0$ and $H^2(A,\omega)\cong \C$.
A nontrivial 2-cocycle for $\omega$ is given by
\[
\psi = b\cdot \partial(\omega_{167}) + c\cdot\partial(\omega_{257}) +
d\cdot\partial(\omega_{347}).
\]

One sees that $\omega$ is equal to the product
\[
\left[\d x\,\d y\,\d z\,\d w\right]\left[\begin{array}{cccc}
\frac{1}x & \xzw & \xyw & \xyz\\
\yzw & \frac{1}y & \xyw & \xyz\\
\yzw & \xzw & \frac{1}z & \xyz\\
\yzw & \xzw & \xyw & \frac{1}w\\
\end{array}\right]\left[\begin{array}{c}a\\b\\c\\d\end{array}\right].
\]
Computing the kernel of the matrix, we see that $[x:y:z:w]\in V(\omega)$ if and only if the vector $(x(y+z+w),y(x+z+w),z(x+y+w),w(x+y+z))$
is proportional to $(a,b,c,d)$, giving the equations
\[
\frac{x(y+z+w)}{w(x+y+z)}=\frac{a}{d},\quad
\frac{y(x+z+w)}{w(x+y+z)}=\frac{b}{d},\quad
\frac{z(x+y+w)}{w(x+y+z)}=\frac{c}{d}.
\]
$V(\omega)$ has a component of codimension two, given by 
\[
x+y+z+w=0, \quad \frac{a}{x}+\frac{b}{y}+\frac{c}{z}+\frac{d}{w}=0.
\]
For general $a,b,c,d$, the remaining components of $V(\omega)$ consist of four additional points in $\P^3$.
It follows from Corollary~\ref{cor:badcomp} that the cocycle $\psi$ vanishes 
on the isolated points of $V(\omega)$.  This can also be verified here by direct
computation.
\end{example}

\begin{example}
\label{ex:hessian}
The Hessian arrangement consists of the $12$ lines through the inflection points of a 
nonsingular cubic in $\P^2$. It is the only known arrangement of rank greater than two 
that supports a global component 
of $\R^1(\A)$ of dimension greater than two. 
That is, there is an element $\omega \in A^1$ which has poles along every hyperplane of \A, and satisfies $\dim H^1(\AA,\omega)> 1$.

Any nonsingular cubic is equivalent to 
\begin{equation}
x^3+y^3+z^3 -3txyz =0
\label{eq:cubics}
\end{equation}
up to projective transformation, for some $t\in \C$. These cubics have the same inflection points. Then, up to projective transformation, the Hessian arrangement \A\ is defined by
\begin{multline*}
Q=xyz(x+y+z)(x+y+\zeta z)(x+y+\zeta^2 z)(x+\zeta y+z)(x+\zeta y+\zeta z)
(x+\zeta y+\zeta^2 z)\\
\cdot (x+\zeta^2 y+z)(x+\zeta^2 y+\zeta z)(x+\zeta^2y+\zeta^2 z),
\end{multline*}
where $\zeta=e^\frac{2\pi i}{3}$.   
The 12 lines of \A\ are the irreducible components of the four singular cubics in the family \ref{eq:cubics}, corresponding to $t=\infty, 1, \zeta$, and $\zeta^2$. See \cite[Example 6.30]{OT92}.

Numbering the hyperplanes in order, these singular cubics are given by
\begin{alignat*}{3}
P_0 &=\alpha_0\alpha_1\alpha_{2\phantom{2}}\ &= &\ xyz, \\
P_1 &=\alpha_3\alpha_8\alpha_{10}\ & = &\ x^3+y^3+z^3-3xyz,\\
P_2 &=\alpha_4\alpha_6\alpha_{11}\ &= &\ x^3+y^3+z^3-3\zeta xyz,  \ \text{and} \\
P_3 &=\alpha_5\alpha_7\alpha_{9\phantom{9}}\ &= &\ x^3+y^3+z^3-3\zeta^2 xyz.
\end{alignat*}



For $(a_1,a_2,a_3)\in \C^3$ let $\omega=\omega_{a_1a_2a_3}=
\d \log(\Phi)$, where
\begin{equation*}\label{eq:one}
\Phi=\Phi_{a_1a_2a_3}=\left(\frac{P_1}{P_0}\right)^{a_1}\left(\frac{P_2}{P_0}\right)^{a_2}\left(\frac{P_3}{P_0}\right)^{a_3}.
\end{equation*}
Let $D\subseteq A^1$ be the space of all such forms. Then $H^1(\AA, \omega)\cong D/\C\omega$ has dimension two.  

%
 
As in the previous examples, we write
\[
\omega=\begin{bmatrix}\d x,\d y,\d z \end{bmatrix}
M \begin{bmatrix}a_1\\a_2\\a_3\end{bmatrix},
\]
where $M$ is a $3\times3$ matrix of rational functions, the Jacobian of 
\[(\log(P_1/P_0), \log(P_2/P_0), \log(P_3/P_0)).\] 
Then $\omega(x)=0$ for $x\in U$ if and only if $a=(a_1,a_2,a_3)$ lies in the kernel of $M(x)$.

The matrix $M$ has rank $1$.  In fact one finds that
$M=vw^T$ where
\[
\begin{array}{ccc}
v=\begin{bmatrix}\frac{2x^3-y^3-z^3}{x} \\ \frac{x^3-2y^3+z^3}{y} \\ \frac{x^3+y^3-2z^3}{z} \end{bmatrix} & \text{and} & 
w=\begin{bmatrix}\frac{1}{P_1} \\ \frac{1}{P_2} \\ \frac{1}{P_3} \end{bmatrix}.
\end{array}
\]

The critical equation becomes $vw^Ta=0$, which is satisfied if and only if $w^Ta=0$ or all components of $v$ vanish. The latter occurs at the points given by $x^3=y^3=z^3$, which are the common inflection points of the cubics (\ref{eq:cubics}). In particular those points do not lie in the complement $U$. Thus $\crit(\Phi)$ is defined by the single equation

\begin{equation}
\frac{a_1}{P_1}+\frac{a_2}{P_2}+ \frac{a_3}{P_3}=0.
\label{eqn:hesscrit}
\end{equation}

Then $\crit(\Phi)$ is empty or has codimension one and degree six. Working in the torus $xyz\neq 0$, set
\[
T=\frac{x^3+y^3+z^3}{xyz}.
\]
Then equation \eqref{eqn:hesscrit} is equivalent to 
\begin{equation}
\frac{a_1}{T-3}+\frac{a_2}{T-3\zeta}+\frac{a_3}{T-3\zeta^2}=0,
\label{eqn:quad}
\end{equation}
which becomes a quadratic $AT^2+BT+C=0$ in $T$. 

Then, if $a=(a_1,a_2,a_3)$ is generic, $\crit(\Phi)$ has two irreducible components. Each component is the intersection with $U$ of a level set $T=3t$ of $T$. These are nonsingular fibers in the pencil (\ref{eq:cubics}), meeting each other and \A\ at their nine common inflection points. So $\crit(\Phi)\subseteq U$ has two connected components. Every pair of nonsingular fibers appears as $\crit(\Phi)$ for some $a$. The discriminant $B^2-4AC$ defines a hypersurface in $(a_1,a_2,a_3)$-space for which the corresponding critical locus $\crit(\Phi)$ has a single nonreduced component, and every nonsingular fiber can appear. 

 For some values of $a$, $\crit(\Phi)$ is empty or has only one reduced component. 
 This is easiest to see by clearing fractions in \eqref{eqn:hesscrit}, to obtain 
\begin{equation}
a_1P_2P_3+a_2P_1P_3+a_3P_1P_2=0.
\label{eqn:hesscrit2}
\end{equation}
When one component of the variety defined by \eqref{eqn:hesscrit2} is $P_i=0$ or $P_0=xyz=0$, then $\crit(\Phi)$ has one reduced component. This occurs if $a$ is a generic point on $a_i=0$ or $a_1+a_2+a_3=0$. If $a_i=0$ for some $i$ and $a_1+a_2+a_3=0$, or if $a_i\neq 0$ for only one $i$, then \eqref{eqn:hesscrit2} becomes $P_iP_j=0$, and $\crit(\Phi)=\emptyset$. If $a$ is a cyclic permutation of $(0,\zeta,1)$ or equals $(1,\zeta, \zeta^2)$, up to scalar multiple, then \eqref{eqn:hesscrit2} becomes $P_i^2=0$, and $\crit(\Phi)=\emptyset$.

Each cocycle of $\omega=\omega_{a_1a_2a_3}$ has the form 
$\psi=\omega_{b_1b_2b_3}$ for some $b_1,b_2,b_3$. So we see that $V(\psi)$ and $V(\omega)$ can have a component in common, but if $\omega$ and $\psi$ are not proportional, then $V(\omega)-V(\psi)$ is nonempty and has codimension one.

\end{example}
\end{section}

\begin{section}{Decomposable cocycles}
We will now assume $\psi$ is a cocycle for $\omega \in A^1$ and $\psi$ is a product of logarithmic one-forms. Then $\omega$ is a factor in a vanishing product of $p+1$ one-forms in \AA. To carry out our geometric analysis it is necessary to work initially over the integers, although eventually the results extend to \C-linear combinations of the original integral weights. For that reason we state the result in terms of subspaces of $A^1$.

We will be dealing with rational functions parametrizing affine or projective algebraic varieties. For that reason we formulate and prove our results algebraically. For any quasi-projective variety $X$, write $\C[X]$ for the ring of regular functions on $X$, and $\C(X)$ for the field of rational functions on $X$. If $X$ is a subvariety of projective space, then elements of $\C[X]$  are represented by homogeneous polynomials and elements of $\C(X)$ by homogeneous rational functions of degree zero. 
If $\varphi \colon R \to S$ is a homomorphism of \C-algebras, denote by $\Omega_{S|R}$ the $S$-module of Kahler differentials of $S$ over $R$. Write $\Omega[X]=\Omega_{\C[X]|\C}$ and $\Omega(X)= \Omega_{\C(X)|\C}$. 
Elements of $\Omega[X]$ (resp. $\Omega(X)$) are polynomial (resp. rational) one-forms on $X$.

\subsection{Singular subspaces of $A^1$} Let $D$ be a subspace of $A^1$. We call $D$ a {\em singular subspace} if the multiplication map $\bigwedge^{q}(D) \to A^q$ is the zero map, where $q=\dim D$. The {\em rank} of $D$ is the largest $p$ such that $\bigwedge^p(D) \to A^p$ is not trivial.
We say $D$ is {\em rational} if $D$ has a basis $\{\omega_{\xi_1},\ldots,\omega_{\xi_q}\}$, with $\xi_i=(\xi_{i0},\ldots, \xi_{in})\in \Z^{n+1}$ for $1\leq i\leq q$.
Then $\Phi_{\xi_i}=\prod_{j=1}^n f_j^{\xi_{ij}}=\prod_{j=0}^n \alpha_j^{\xi_{ij}}$ is a single-valued rational function on $\P^\ell$, regular on $U$. (Recall $\sum_{j=0}^n\xi_{ij}=0$.)

We apply the following general result. It is an easy consequence of the implicit function theorem, but we give an algebraic proof that holds over any algebraically-closed field of characteristic zero. See \cite{Har77} and \cite{Eis95} for the relevant background on K\"ahler differentials.

\begin{proposition} Suppose $F_1,\ldots, F_q$ are rational functions on $\C^\ell$, and 
\[
F =(F_1,\ldots, F_q) \colon \C^\ell \rightarrowtail \C^q.
\]
Then the image of $F$ has dimension less than $k$ if and only if 
\[
\d F_{i_1}\we \cdots \we \d F_{i_k}=0
\] 
for all $1\leq i_1<\cdots<i_k\leq q$.
\label{prop:vanishing}
\end{proposition}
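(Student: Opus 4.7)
The plan is to translate the statement into the language of Kähler differentials of the associated function fields and reduce it to a transcendence-degree computation. Let $K = \C(x_1,\ldots,x_\ell)$ be the function field of $\C^\ell$, and let $L = \C(F_1,\ldots,F_q) \subseteq K$ be the subfield generated by the coordinates of $F$. The image of the rational map $F$ is (by definition) the Zariski closure in $\C^q$ of the set of regular values, and its dimension equals the transcendence degree $\mathrm{trdeg}_\C L$. Thus the statement becomes the assertion that $\mathrm{trdeg}_\C L < k$ if and only if every $k$-fold wedge $\d F_{i_1}\wedge\cdots\wedge \d F_{i_k}$ vanishes in $\Omega(\C^\ell) = \Omega_{K|\C}$.

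Next, I would use two standard facts about Kähler differentials over a field of characteristic zero. First, since $L$ is a finitely generated extension of $\C$, it is separably generated, and $\Omega_{L|\C}$ is an $L$-vector space of dimension exactly $\mathrm{trdeg}_\C L$; moreover $\d F_1,\ldots, \d F_q$ form a generating set for $\Omega_{L|\C}$. Second, since $K/L$ is finitely generated (hence separable in characteristic zero), the fundamental exact sequence
\[
\Omega_{L|\C} \otimes_L K \longrightarrow \Omega_{K|\C} \longrightarrow \Omega_{K|L} \longrightarrow 0
\]
has injective left arrow. Consequently, a wedge $\d F_{i_1}\wedge\cdots\wedge\d F_{i_k}$ vanishes in $\Omega_{K|\C}$ if and only if it vanishes in $\Omega_{L|\C}$ (after tensoring, vanishing is preserved and detected).

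Combining these, the largest $k$ for which some $\d F_{i_1}\wedge\cdots\wedge\d F_{i_k}$ is nonzero in $\Omega(\C^\ell)$ equals the rank of the $L$-span of $\d F_1,\ldots,\d F_q$ in $\Omega_{L|\C}$, which is $\dim_L \Omega_{L|\C} = \mathrm{trdeg}_\C L = \dim \mathrm{Im}(F)$. Equivalently, all such $k$-wedges vanish precisely when $\dim\mathrm{Im}(F) < k$, as claimed.

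The main technical point — and the only place one must be careful — is the injectivity of $\Omega_{L|\C}\otimes_L K \to \Omega_{K|\C}$, which uses that $K/L$ is separable; in characteristic zero this is automatic, so the argument is clean. As an alternative (or sanity check), one may phrase the same content concretely via the Jacobian: $\d F_{i_1}\wedge\cdots\wedge \d F_{i_k}$ is the $k$-form whose coefficients are the $k\times k$ minors formed from rows $i_1,\ldots,i_k$ of the Jacobian matrix $(\partial F_i/\partial x_j)$, and the generic rank of this Jacobian is exactly $\dim\mathrm{Im}(F)$, by generic smoothness of the dominant map $\C^\ell \rightarrowtail \overline{\mathrm{Im}(F)}$.
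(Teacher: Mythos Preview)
Your proof is correct and takes essentially the same route as the paper's: identify $\dim\im(F)$ with the transcendence degree of $L=\C(F_1,\ldots,F_q)$ over $\C$, and use that the $\d F_i$ span the $(\mathrm{trdeg}_\C L)$-dimensional $L$-vector space $\Omega_{L|\C}$. You are more explicit than the paper about the injectivity of $\Omega_{L|\C}\otimes_L K \to \Omega_{K|\C}$, which is what transports nonvanishing of a wedge from $\Omega_{L|\C}$ to $\Omega(\C^\ell)$; the paper leaves this passage implicit in its appeal to \cite[Theorem~16.14]{Eis95}.
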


\begin{proof} The image of $F$ is a quasi-affine variety, whose function field is isomorphic to $\C(F_1,\ldots,F_q)$. Then the dimension $p$ of $\im(F)$ is equal to the transcendence degree of $\C(F_1,\ldots,F_q)$ over \C. Without loss of generality, suppose $\{F_1,\ldots, F_p\}$ is a transcendence base for $\C(F_1,\ldots,F_q)$ over \C. Then the set $\{\d F_1, \ldots, \d F_p\}$ forms a basis for $\Omega(\C(F_1,\ldots,F_q))$, a vector space over $\C(F_1,\ldots,F_q), $ see \cite[Theorem 16.14]{Eis95}. Then 
\[
\d F_1 \we \cdots \we \d F_p\neq 0.
\] 
If $\{F_{i_1}, \ldots, F_{i_k}\} \subseteq \{F_1,\ldots, F_q\}$ with $k>p$, then $\{\d F_{i_1}, \ldots, \d F_{i_k}\}$ is linearly dependent over $\C(F_1,\ldots,F_q)$, hence 
\[
\d F_{i_1} \we \cdots \we \d F_{i_k}=0.
\]
\end{proof}

If $\Lambda=(\xi_1,\ldots, \xi_q)$ with $\xi_i\in \Z^{n+1}$ satisfying $\sum_{j=0}^n \xi_{ij}=0$, set
\[
\Phi_\Lambda:=(\Phi_{\xi_1}, \ldots , \Phi_{\xi_q}) \colon \P^\ell \rightarrowtail \C^q.
\]
Proposition~\ref{prop:vanishing} applies as follows.
\begin{corollary} Suppose $D$ is a rational singular subspace of $A^1$, and $\Phi_\Lambda$ is the rational mapping associated to an ordered integral basis $\Lambda$ of $D$. Then $\dim \Phi_\Lambda(U)$ is equal to the rank of $D$.
In particular, $\Phi_\Lambda(U)$ has positive codimension in $\C^q$. 
\label{cor:dim}
\end{corollary}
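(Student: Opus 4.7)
The plan is to apply Proposition~\ref{prop:vanishing} to the component functions $F_i := \Phi_{\xi_i}$ of $\Phi_\Lambda$ and exploit the identity
\[
\d F_i = F_i \cdot \omega_{\xi_i},
\]
which holds on $U$ because each $F_i$ is regular and nonvanishing there (the $\alpha_j$ all being nonvanishing on $U$). This identity gives, for any subset $\{i_1<\cdots<i_k\}\subseteq\{1,\ldots,q\}$,
\[
\d F_{i_1}\we\cdots\we \d F_{i_k} \;=\; F_{i_1}\cdots F_{i_k}\cdot \omega_{\xi_{i_1}}\we\cdots\we \omega_{\xi_{i_k}},
\]
where the scalar factor is a unit on $U$. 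Consequently, the vanishing of a $k$-fold wedge of the $\d F_i$'s on $U$ is equivalent to the vanishing of the corresponding $k$-fold wedge in $A^k$.

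Let $p$ denote the rank of $D$. For the upper bound $\dim \Phi_\Lambda(U)\leq p$, I would observe that for every $k>p$ the map $\bigwedge^k(D)\to A^k$ is, by definition of the rank, the zero map; hence every $k$-fold wedge of basis elements of $D$ vanishes in $A^k$. By the identity above, $\d F_{i_1}\we\cdots\we \d F_{i_k}=0$ for every size-$k$ subset, and Proposition~\ref{prop:vanishing} yields $\dim \Phi_\Lambda(U)<k$ for all $k>p$.

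For the matching lower bound, I would use that the map $\bigwedge^p(D)\to A^p$ is nonzero by definition of the rank. Its image is spanned by wedges of basis elements, so after reindexing $\Lambda$ one may assume $\omega_{\xi_1}\we\cdots\we \omega_{\xi_p}$ is nonzero in $A^p$, hence nonzero as a holomorphic form on $U$. Since $F_1\cdots F_p$ is nonvanishing on $U$, the identity above shows $\d F_1\we\cdots\we \d F_p\neq 0$ on $U$, and the contrapositive of Proposition~\ref{prop:vanishing} gives $\dim \Phi_\Lambda(U)\geq p$. The ``in particular'' clause is then immediate: singularity of $D$ says $\bigwedge^q(D)\to A^q$ is zero, forcing $p<q$, so $\Phi_\Lambda(U)$ has positive codimension in $\C^q$.

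I don't foresee a serious obstacle; the only step deserving care is the passage from nontriviality of $\bigwedge^p(D)\to A^p$ to nonvanishing of a specific \emph{basis} $p$-wedge, which is handled simply by noting that the image of this map is spanned by basis wedges.
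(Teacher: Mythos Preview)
Your proof is correct and is precisely the argument the paper has in mind: the corollary is stated without proof as an immediate consequence of Proposition~\ref{prop:vanishing}, and the bridge is exactly the logarithmic-derivative identity $\d\Phi_{\xi_i}=\Phi_{\xi_i}\,\omega_{\xi_i}$ you use, together with the fact that the $\Phi_{\xi_i}$ are units on $U$. Your remark on passing from nontriviality of $\bigwedge^p(D)\to A^p$ to a nonvanishing basis $p$-wedge is the only detail worth noting, and you handle it correctly.
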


Let $[z_0:\cdots:z_q]$ be homogeneous coordinates on $\P^q$, and identify $\C^q$ with the $\P^q - \{z_0= 0\}$ as above. Let $\bar{Y}$ be the Zariski closure of $Y=\Phi_\Lambda(U)$ in $\P^q$. In homogeneous coordinates, $\Phi_\Lambda$ is given by
\[
\Phi_\Lambda = [1:\Phi_{\xi_1}:\cdots:\Phi_{\xi_q}] \colon \P^\ell \rightarrowtail \P^q.
\]
Clearing fractions, we have
\[
\Phi_\Lambda = [\Phi_{\nu_0}:\Phi_{\nu_1}:\cdots:\Phi_{\nu_q}]
\]
where  the master functions $\Phi_{\nu_i}$ are homogeneous polynomials of the same degree $d$. Moreover we may assume the $\Phi_{\nu_i}$ have no common factors. Equivalently,  the weights $\nu_i=(\nu_{i0}, \ldots, \nu_{in})$ are non-negative integer vectors with $\sum_{i=0}^n \nu_{ij} = d$ for $0\leq i \leq q$, whose supports have empty intersection. Then
\[
\Phi_{\xi_i}=\frac{\Phi_{\nu_i}}{\Phi_{\nu_0}},\quad \xi_i=\nu_i-\nu_0, \quad \text{and} \quad \omega_{\xi_i}= \omega_{\nu_i} - \omega_{\nu_0}.
\]


\begin{definition} $\Lambda$ is {\em essential} if 
every hyperplane $H \in \A$ appears as a component of $V(\Phi_{\nu_i})$ for some $i, \ 0\leq i\leq q$. 
\end{definition}
Henceforth we will tacitly assume $\Lambda$ is essential. We have $Y \subseteq \bar{Y} \cap (\C^*)^q$ in any case. If $\Lambda$ is not essential, the inclusion is proper, and may be proper otherwise -- see Example~\ref{ex:braid_v2}. 

The defining ideal $I=I_\Lambda$ of $\bar{Y}$ is generated by homogeneous polynomials $P(z_0,\ldots, z_q)$ for which $P(\Phi_{\nu_0},\ldots, \Phi_{\nu_q})$ vanishes identically on $U$,  or, equivalently, $P(1,\Phi_{\xi_1}, \ldots , \Phi_{\xi_q})=0$. We will sometimes refer to $I_\Lambda$ as the {\em syzygy ideal} of $\Lambda$.  The mapping $z_i \mapsto \Phi_{\nu_i}$ induces an isomorphism of rings
\[
\C[\bar{Y}] =\C[z_0,\ldots,z_q]/I \longrightarrow \C[\Phi_{\nu_0},\ldots,\Phi_{\nu_q}].
\]
In particular $\bar{Y}$ is irreducible. Identifying $\C[\bar{Y}]$ with $\C[\Phi_{\nu_0}, \ldots, \Phi_{\nu_q}]$, the dominant rational mapping $\Phi_\Lambda \colon \P^\ell \rightarrowtail \bar{Y}$ corresponds to the field extension 
\[
\C(\Phi_{\xi_1},\ldots, \Phi_{\xi_q}) \subseteq \C(x_1,\ldots,x_\ell).
\] 
The affine ring $\C[Y]$ is isomorphic to a localization of the ring of Laurent polynomials $\C[\Phi_{\nu_0}^{\pm 1}, \ldots, \Phi_{\nu_q}^{\pm 1}]$. 

If $\omega \in D-\{0\}$, we write $\omega=\omega_a=\sum_{i=0}^q a_i\omega_{\nu_i}=\sum_{i=1}^n a_i\omega_{\xi_i}$ with $a=(a_0,\ldots a_q) \in \C^{q+1} - \{0\}$, satisfying $\sum_{i=0}^q a_i=0$. Note that any $p$-fold wedge  product $\psi=\omega_{\xi_{i_1}} \we \cdots \we \omega_{\xi_{i_p}}$ is a cocycle for $\omega$. 

The one-form $\sum_{i=0}^q a_i \d \log (z_i)\in \Omega(\C^{q+1})$ is $\C^*$-invariant, and contracts trivially
along the Euler vector field, so it descends to a well-defined
rational one-form on $\P^q$.  This form restricts to a one-form in $\Omega(\bar{Y})$ which
we denote by $\tau_a$.  Since $Y \subseteq (\C^*)^q$, $\tau_a$ is regular on $Y$. Note that $\sum_{i=0}^q a_i \d \log (z_i) = \d \log \mu_a$, where $\mu_a=\prod_{i=0}^q z_i^{a_i}$ is a master function for the arrangement of coordinate hyperplanes in $\P^q$. 

We show that the zeros of $\tau_a$ pull back to zeros of $\omega_a$.

\begin{lemma} Let $x \in U$ and $y=\Phi_\Lambda(x) \in Y$. Then $\omega_a(x)=0$ if and only if $\tau_a(y) \in \ker(\Phi_\Lambda^*)_y$.
\label{lem:kernel}
\end{lemma}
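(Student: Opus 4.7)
The plan is to reduce the lemma to the single identity $\omega_a = \Phi_\Lambda^*\tau_a$, and then read off the pointwise statement from naturality of pullback of K\"ahler differentials.

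First I would unpack $\tau_a$. Under the isomorphism $\C[\bar Y]\cong\C[\Phi_{\nu_0},\ldots,\Phi_{\nu_q}]$, the mapping $\Phi_\Lambda$ corresponds to sending $z_i\mapsto \Phi_{\nu_i}$, so the induced pullback on rational one-forms sends $\d\log z_i$ to $\d\log \Phi_{\nu_i}=\omega_{\nu_i}$. The condition $\sum_{i=0}^q a_i=0$ is exactly what makes $\sum_i a_i\,\d\log z_i$ descend from $\C^{q+1}-\{0\}$ to $\P^q$ (it is $\C^*$-invariant and annihilated by contraction with the Euler field), and its restriction to $\bar Y$ is by definition $\tau_a$. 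Pulling back and using linearity of $\Phi_\Lambda^*$ then gives
\[
\Phi_\Lambda^*\tau_a \;=\; \sum_{i=0}^q a_i\,\d\log \Phi_{\nu_i}\;=\;\sum_{i=0}^q a_i \omega_{\nu_i}\;=\;\omega_a,
\]
an identity of rational one-forms on $\P^\ell$. Note that both sides are regular wherever the right-hand side is, so no denominators cause trouble.

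Second, I would evaluate at $x$. By the standard pointwise description of the pullback, for any $x$ at which $\Phi_\Lambda$ is defined and any one-form $\eta$ on a neighbourhood of $y=\Phi_\Lambda(x)$,
\[
(\Phi_\Lambda^*\eta)(x) \;=\; (\Phi_\Lambda^*)_y\bigl(\eta(y)\bigr),
\]
where $(\Phi_\Lambda^*)_y\colon T^*_y\bar Y\to T^*_x\P^\ell$ is the transpose of the differential $(\d\Phi_\Lambda)_x$. Applied to $\eta=\tau_a$ and combined with the identity above, this gives $\omega_a(x)=(\Phi_\Lambda^*)_y(\tau_a(y))$, from which the equivalence $\omega_a(x)=0 \iff \tau_a(y)\in\ker(\Phi_\Lambda^*)_y$ is immediate.

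The only point requiring any care — scarcely an obstacle — is that the relevant forms are actually regular at the relevant points. Since $x\in U$ the hyperplane forms $\omega_i=\d\log\alpha_i$ are all holomorphic at $x$, so $\omega_a(x)$ makes sense; and since $y\in Y\subseteq\bar Y\cap(\C^*)^q$ each coordinate $z_i$ is nonzero at $y$, so $\tau_a$ is regular at $y$. Everything in sight is therefore a bona fide element of the appropriate (co)tangent space, and the identity $\omega_a=\Phi_\Lambda^*\tau_a$ is simply the functoriality of $\Omega_{-|\C}$ applied to the inclusion $\C(\bar Y)\hookrightarrow\C(\P^\ell)$.
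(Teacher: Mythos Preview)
Your argument is correct and is essentially the paper's own proof: both establish the identity $\omega_a=\Phi_\Lambda^*\tau_a$ and then read off the pointwise statement by localizing at $y$. You spell out the pointwise evaluation and the regularity checks more carefully, but the underlying idea is identical.
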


\begin{proof} We have 
\[
\omega_a=\sum_{i=0}^q a_i \d \log \Phi_{\nu_i} 
 = \d \log \prod_{i=0}^q \Phi_{\nu_i}^{a_i},
= \Phi_\Lambda^*(\d \log \prod_{i=0}^q z_i^{a_i})
=\Phi_\Lambda^*(\tau_a).
\]
The result follows upon localization at $y$.
\end{proof}

Note that 
\[
\Phi_\Lambda^*(\tau_a)=\sum_{i=0}^q \sum_{j=0}^{\ell} \frac{a_i}{\Phi_{\nu_i}} \frac{\partial \Phi_{\nu_i}}{\partial x_j} \d x_j.
\] Then $\tau_a(y) \in \ker (\Phi_\Lambda^*)_y$ if and only if $\begin{bmatrix}\frac{a_0}{y_0} & \cdots & \frac{a_q}{y_q} \end{bmatrix}$ lies in the left null space of the Jacobian of $\Phi_\Lambda$. 

Let $\Sing(\Phi_\Lambda)$ denote the singular locus of $\Phi_\Lambda$, and $\Sing(\bar{Y})$ the singular locus of $\bar{Y}$. Let $S_\Lambda=\Sing(\Phi_\Lambda) \cup \Phi_\Lambda^{-1}(\Sing(\bar{Y})) \subseteq U$.
\begin{theorem} $V(\omega_a)$ contains $\Phi_\Lambda^{-1}(V(\tau_a))$, and $V(\omega_a) - \Phi^{-1}(V(\tau_a))$ is a subset of $S_\Lambda$.
\label{thm:crit_fixed}
\end{theorem}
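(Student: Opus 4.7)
The plan is to apply Lemma~\ref{lem:kernel} pointwise and deduce both inclusions from a linear-algebraic fact about the cotangent map of $\Phi_\Lambda$.

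The first inclusion is immediate. If $x \in \Phi_\Lambda^{-1}(V(\tau_a))$ and $y = \Phi_\Lambda(x)$, then $\tau_a(y) = 0$, which trivially belongs to $\ker(\Phi_\Lambda^*)_y$. By Lemma~\ref{lem:kernel} we conclude $\omega_a(x) = 0$, so $x \in V(\omega_a)$.

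For the second assertion, I would proceed by contrapositive: suppose $x \in V(\omega_a)$ and $x \notin S_\Lambda$, and aim to show $y = \Phi_\Lambda(x) \in V(\tau_a)$. The condition $x \notin \Phi_\Lambda^{-1}(\Sing(\bar Y))$ means $\bar Y$ is smooth at $y$, so $T_y \bar Y$ is a vector space of dimension $p = \rank D$ by Corollary~\ref{cor:dim}. The condition $x \notin \Sing(\Phi_\Lambda)$ means the Jacobian of $\Phi_\Lambda$ at $x$ has its maximal generic rank, namely $p$. Since $\Phi_\Lambda$ factors through $\bar Y$, the image of the tangent map $(d\Phi_\Lambda)_x \colon T_x U \to T_y \C^q$ is contained in $T_y \bar Y$, and a comparison of dimensions shows $(d\Phi_\Lambda)_x$ surjects onto $T_y \bar Y$. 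Dually, $(\Phi_\Lambda^*)_y \colon T^*_y \bar Y \to T^*_x U$ is injective, so $\ker (\Phi_\Lambda^*)_y = 0$. From Lemma~\ref{lem:kernel} applied at $x$, $\tau_a(y) \in \ker(\Phi_\Lambda^*)_y = 0$, hence $y \in V(\tau_a)$, as required.

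I do not foresee any serious obstacle. Once the identity $\omega_a = \Phi_\Lambda^*(\tau_a)$ recorded in the proof of Lemma~\ref{lem:kernel} is in hand, the argument is the standard equivalence between smoothness of a dominant morphism onto a smooth variety and surjectivity of its tangent map. The only care required is to verify that $\Sing(\Phi_\Lambda)$, interpreted algebraically as the locus where the Jacobian rank drops below $p$, coincides with the locus where $(d\Phi_\Lambda)_x$ fails to surject onto $T_y \bar Y$; at smooth points of $\bar Y$ this is a routine dimension count.
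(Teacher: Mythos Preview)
Your argument is correct and follows essentially the same route as the paper's proof: both use Lemma~\ref{lem:kernel} for the first inclusion, and for the second, both observe that outside $S_\Lambda$ the Jacobian of $\Phi_\Lambda$ has rank $p=\dim\bar Y$ while the cotangent space of $\bar Y$ at a smooth point also has dimension $p$, forcing $\ker(\Phi_\Lambda^*)_y=0$. The only cosmetic difference is that you phrase the dimension count via tangent spaces and dualize, whereas the paper works directly with $\Omega(Y)_y$.
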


\begin{proof} The first statement is immediate from Lemma~\ref{lem:kernel}. If $\Phi_\Lambda$ is nonsingular at $x \in U$ then the Jacobian of $\Phi_\Lambda$ attains its maximal rank $p=\dim(Y)$ at $x$.  If in addition $\bar{Y}$ is nonsingular at $y=\Phi_\Lambda(x)$, then $\dim(\Omega(Y)_y)=p$. Then $\ker(\Phi_\Lambda^*)_y=0$.  The second statement then follows from Lemma \ref{lem:kernel}.
\end{proof}

\begin{corollary} 
Suppose $D$ is a rational singular subspace of $A^1$, with integral basis $\Lambda$. Let $p$ be the rank of $D$. If $\d \log(\Phi) \in D$ then $\crit(\Phi)\subseteq S_\Lambda$ or  $\codim (\crit(\Phi))\leq p$. 
\label{cor:codimp}
\end{corollary}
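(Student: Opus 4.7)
The plan is to apply Theorem~\ref{thm:crit_fixed} to the one-form $\omega_a=\d\log(\Phi)\in D$ and split into two cases according to whether the zero locus $V(\tau_a)\subseteq Y$ is empty.  Theorem~\ref{thm:crit_fixed} provides the chain
\[
\Phi_\Lambda^{-1}(V(\tau_a)) \;\subseteq\; \crit(\Phi)=V(\omega_a) \;\subseteq\; \Phi_\Lambda^{-1}(V(\tau_a))\cup S_\Lambda,
\]
in which both $\Phi_\Lambda^{-1}(V(\tau_a))$ and $S_\Lambda$ are Zariski-closed in $U$.  If $V(\tau_a)=\emptyset$, the leftmost term is empty and the second inclusion immediately forces $\crit(\Phi)\subseteq S_\Lambda$, which is the first alternative of the corollary.

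Suppose instead that $V(\tau_a)\neq\emptyset$, and pick any point $y\in V(\tau_a)\subseteq Y=\Phi_\Lambda(U)$.  By Corollary~\ref{cor:dim} we have $\dim Y=p$; and since $U$ is irreducible and $\Phi_\Lambda\colon U\to Y$ is a dominant morphism onto the irreducible variety $Y$, the fiber-dimension theorem yields $\dim \Phi_\Lambda^{-1}(y)\geq \ell-p$.  Because $\Phi_\Lambda^{-1}(y)\subseteq \Phi_\Lambda^{-1}(V(\tau_a))\subseteq \crit(\Phi)$, some irreducible component of $\crit(\Phi)$ has dimension at least $\ell-p$, equivalently codimension at most $p$ in $U$.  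This is the second alternative, $\codim(\crit(\Phi))\leq p$.

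I do not expect any serious obstacle: the statement is almost a formal consequence of Theorem~\ref{thm:crit_fixed} together with Corollary~\ref{cor:dim} and the standard lower bound on fiber dimensions of dominant morphisms.  The only bookkeeping point to be careful about is that a component of $\Phi_\Lambda^{-1}(V(\tau_a))$ of the expected dimension need not itself be an irreducible component of $\crit(\Phi)$; however, it is contained in some such component, and this enveloping component has dimension at least as large, so the codimension bound $\leq p$ is inherited.
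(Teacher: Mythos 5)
Your proposal is correct and follows essentially the same route as the paper: both split on whether $V(\tau_a)$ is empty, use Theorem~\ref{thm:crit_fixed} to get $\crit(\Phi)\subseteq S_\Lambda$ in the empty case, and otherwise pull back $V(\tau_a)$ along $\Phi_\Lambda$, using $\dim Y = p$ (Corollary~\ref{cor:dim}) to bound the codimension. Your explicit appeal to the fiber-dimension theorem at a point of $V(\tau_a)$ just spells out the step the paper leaves implicit when asserting that $\Phi_\Lambda^{-1}(V(\tau_a))$ has codimension at most $p$.
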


\begin{proof} 
Write $\omega = \d \log(\Phi) = \sum_{i=1}^q a_i \omega_{\xi_i}$. The hypothesis implies $\dim Y=p$, so $V(\tau_a)$ is empty or has codimension at most $p$ in $Y$. In the first case $V(\omega)\subseteq S_\Lambda$ by the preceding theorem. Otherwise, $V(\omega)\supseteq \Phi_\Lambda^{-1}(V(\tau_a))$ has codimension at most $p$.
\end{proof}

\begin{corollary} 
Suppose $D$ is a rational singular subspace of $A^1$, with integral basis $\Lambda$.  If $\d \log(\Phi) \in D$, then $\crit(\Phi) - S_\Lambda$ is a union of fibers of $\Phi_\Lambda$. 
\label{cor:fibers}
\end{corollary}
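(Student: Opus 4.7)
The plan is to deduce the corollary directly from Theorem~\ref{thm:crit_fixed}, with no further input. The essential observation is that the preimage of any set under $\Phi_\Lambda$ is automatically a union of fibers; Theorem~\ref{thm:crit_fixed} already identifies $\crit(\Phi)$ with such a preimage, up to an error set contained in $S_\Lambda$.

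Concretely, write $\omega=\d\log\Phi=\omega_a$ for some $a=(a_0,\ldots,a_q)\in\C^{q+1}$ with $\sum_i a_i=0$. Theorem~\ref{thm:crit_fixed} furnishes the two inclusions
\[
\Phi_\Lambda^{-1}(V(\tau_a)) \subseteq V(\omega_a) \subseteq \Phi_\Lambda^{-1}(V(\tau_a)) \cup S_\Lambda.
\]
Intersecting each side with the complement of $S_\Lambda$ in $U$, the second inclusion gives $V(\omega_a)-S_\Lambda\subseteq \Phi_\Lambda^{-1}(V(\tau_a))-S_\Lambda$, while the first gives the reverse containment. Hence
\[
\crit(\Phi)-S_\Lambda \;=\; V(\omega_a)-S_\Lambda \;=\; \Phi_\Lambda^{-1}(V(\tau_a))-S_\Lambda \;=\; \bigcup_{y\in V(\tau_a)}\bigl(\Phi_\Lambda^{-1}(y)-S_\Lambda\bigr),
\]
which exhibits $\crit(\Phi)-S_\Lambda$ as a union of fibers of $\Phi_\Lambda$ (those lying over points of $V(\tau_a)$, with points in $S_\Lambda$ removed). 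Equivalently, if $x\in\crit(\Phi)-S_\Lambda$ and $x'\in U-S_\Lambda$ satisfies $\Phi_\Lambda(x')=\Phi_\Lambda(x)$, then $\Phi_\Lambda(x')\in V(\tau_a)$, and the first inclusion of Theorem~\ref{thm:crit_fixed} places $x'$ in $V(\omega_a)=\crit(\Phi)$.

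There is no substantive obstacle: all of the geometric content has been absorbed into Theorem~\ref{thm:crit_fixed}, and what remains is the tautology that a preimage is a union of fibers. The only point worth flagging is interpretive, namely that a fiber $\Phi_\Lambda^{-1}(y)$ may genuinely meet $S_\Lambda$, so ``union of fibers'' must be read with the understanding that we are working on the regular locus $U-S_\Lambda$; the corollary is then best viewed as a structural reformulation of Theorem~\ref{thm:crit_fixed}, asserting that membership in the critical locus outside $S_\Lambda$ depends only on the value of the auxiliary map $\Phi_\Lambda$.
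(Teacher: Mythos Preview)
Your proposal is correct and matches the paper's approach: the paper states the corollary without proof, treating it as an immediate consequence of Theorem~\ref{thm:crit_fixed}, which is exactly what you do. Your interpretive remark about fibers possibly meeting $S_\Lambda$ is a fair clarification of a point the paper leaves implicit.
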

The fibers of $\Phi_\Lambda$ are intersections of level sets of the rational master functions $\Phi_{\xi_i}$, for $1\leq i\leq q$.

We have not used the assumption that $\{\xi_1,\ldots, \xi_q\}$ is linearly independent, i.e., that the dimension of $D$ is strictly greater than $p$. This hypothesis rules out a trivial case.
\begin{proposition} Suppose $a\neq 0$. Then $\tau_a$ is not identically zero on $Y$. 
\label{prop:trivial}
\end{proposition}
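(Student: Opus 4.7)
The plan is to argue by contrapositive: assume $\tau_a$ vanishes identically on $Y$, pull back, and derive $a=0$ from the linear independence of the basis $\{\omega_{\xi_1},\dots,\omega_{\xi_q}\}$ of $D$.

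Concretely, I would first recall from the computation in the proof of Lemma~\ref{lem:kernel} the pullback identity
\[
\Phi_\Lambda^*(\tau_a) \;=\; \sum_{i=0}^q a_i\, \d\log \Phi_{\nu_i} \;=\; \omega_a,
\]
valid as rational one-forms on $\P^\ell$ and, in particular, as forms on $U$. Since $\Phi_\Lambda(U)\subseteq Y$, if $\tau_a\equiv 0$ on $Y$ then $\omega_a\equiv 0$ on $U$, hence $\omega_a=0$ as an element of $A^1$.

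Next, I would rewrite $\omega_a$ in the $\xi$-basis using the constraint $\sum_{i=0}^q a_i=0$:
\[
\omega_a \;=\; \sum_{i=0}^q a_i\,\omega_{\nu_i} \;=\; \sum_{i=1}^q a_i(\omega_{\nu_i}-\omega_{\nu_0}) \;=\; \sum_{i=1}^q a_i\,\omega_{\xi_i}.
\]
By hypothesis $\{\omega_{\xi_1},\dots,\omega_{\xi_q}\}$ is a basis of $D\subseteq A^1$, so these forms are linearly independent in $A^1$. Thus $\omega_a=0$ forces $a_1=\cdots=a_q=0$, and then $a_0=-\sum_{i\geq 1}a_i=0$, contradicting $a\neq 0$.

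There is essentially no obstacle here: the only point to keep an eye on is confirming that the pullback identity $\omega_a=\Phi_\Lambda^*(\tau_a)$ genuinely holds as an equality of rational forms on $\P^\ell$ (so that vanishing of $\tau_a$ as a rational form on $\bar Y$ forces vanishing of $\omega_a$ on the dense open $U$), which is exactly the calculation already recorded for Lemma~\ref{lem:kernel}. Everything else is linear algebra in $A^1$.
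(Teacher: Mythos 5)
Your proof is correct and is essentially the paper's own argument: both pull $\tau_a$ back along $\Phi_\Lambda$ to get $\omega_a=\sum_{i=1}^q a_i\omega_{\xi_i}=0$ on $U$, then invoke the linear independence of the basis $\{\omega_{\xi_1},\ldots,\omega_{\xi_q}\}$ of $D$ to force $a=0$. The only difference is that you spell out the passage from the $\nu$-expression to the $\xi$-basis and the recovery of $a_0=0$, which the paper leaves implicit.
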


\begin{proof} 
If $\tau_a$ is zero on $Y$, then 
\[
\Phi_\Lambda^*(\tau_a)=\sum_{i=0}^q a_i \d \log(\Phi_{\nu_i})
=\sum_{i=0}^q a_i \omega_{\nu_i}
=\sum_{i=1}^q a_i\omega_{\xi_i}
\]
is zero on $U$.  This contradicts the assumption that $\{\omega_{\xi_1},\ldots, \omega_{\xi_q}\}$ is a basis for $D$.
\end{proof}

A singular subspace of rank 1 is called an {\em isotropic} subspace of $A^1$.
\begin{corollary} Suppose $\Lambda$ is an integral basis of an isotropic subspace of $A^1$. Then 
\begin{enumerate}
\item if $\omega=\d \log(\Phi) \in D$ then $\Sing(\Phi_\Lambda)\subseteq \crit(\Phi)$. 
\item if $0\neq\omega=\d \log(\Phi) \in D$, then the components of $\crit(\Phi)- S_\Lambda$ are disjoint hypersurfaces in $U$.
\end{enumerate}
\label{cor:rankonesing}
\end{corollary}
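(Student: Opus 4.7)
The plan is to reduce both statements to the pullback identity $\omega = \Phi_\Lambda^*(\tau_a)$ established in the proof of Lemma~\ref{lem:kernel}, and to exploit the fact that in the isotropic case the rank $p$ of $D$ equals $1$, so $Y = \Phi_\Lambda(U)$ is an irreducible curve and the Jacobian of $\Phi_\Lambda$ has maximal rank $1$.

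For part (i), at a point $x \in \Sing(\Phi_\Lambda)$ the Jacobian has rank strictly less than $p = 1$ and hence vanishes identically. The pullback of a regular one-form under a map whose differential vanishes at $x$ is itself zero at $x$, so $\omega(x) = \Phi_\Lambda^*(\tau_a)(x) = 0$ and $x \in V(\omega) = \crit(\Phi)$. This step is essentially immediate once the identity from Lemma~\ref{lem:kernel} is in hand.

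For part (ii), I would first invoke Proposition~\ref{prop:trivial} to conclude that $\tau_a$ is not identically zero on $Y$. Since $Y$ is a $1$-dimensional irreducible variety on which $\tau_a$ is regular, $V(\tau_a) \cap Y$ is a finite set of points, say $y_1,\ldots,y_s$. Theorem~\ref{thm:crit_fixed} then identifies $\crit(\Phi) - S_\Lambda$ with the disjoint union of the fibers $F_i := \Phi_\Lambda^{-1}(y_i) - S_\Lambda$. Fibers over distinct base points are automatically disjoint, so the remaining task is to understand the irreducible components of a single $F_i$.

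The main obstacle, and the only non-routine point, is showing that each $F_i$ itself decomposes as a disjoint union of smooth hypersurfaces, so its irreducible components cannot meet. The key observation is that $\Phi_\Lambda$ restricted to $U - S_\Lambda$ is a smooth morphism: the Jacobian attains its full rank $1$ by the definition of $\Sing(\Phi_\Lambda)$, and the image avoids $\Sing(\bar Y)$ by the definition of $S_\Lambda$. Smoothness then forces each $F_i$ to be a smooth equidimensional subvariety of codimension $p = 1$, whose connected components agree with its irreducible components and are therefore pairwise disjoint. That each such component has pure codimension $1$ also follows from Corollary~\ref{cor:codimp}, once one notes that the assumption $\omega \ne 0$ rules out the degenerate case $\crit(\Phi) = U$.
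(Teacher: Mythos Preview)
Your argument is correct and follows exactly the paper's route: Lemma~\ref{lem:kernel} for part~(i), then Proposition~\ref{prop:trivial} and Theorem~\ref{thm:crit_fixed} for part~(ii). Your smoothness argument showing that the irreducible components within a single fiber are pairwise disjoint fills in a detail the paper's terse proof leaves implicit; note only that the closing appeal to Corollary~\ref{cor:codimp} yields $\codim\le 1$ rather than equality, but this is harmless since your submersion argument already gives pure codimension one.
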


\begin{proof} Since $\dim(Y)=1$, the Jacobian of $\Phi_\Lambda$ vanishes identically at points of  $\Sing(\Phi_\Lambda)$. Then $\Sing(\Phi_\Lambda) \subseteq  V(\omega)=\crit(\Phi)$ by Lemma~\ref{lem:kernel}. If $a\neq 0$, then $\tau_a$ doesn't vanish identically on $Y$ by Proposition~\ref{prop:trivial}. Then $V(\tau_a)$ is zero-dimensional. Assertion (ii) follows from Theorem~\ref{thm:crit_fixed}.
\end{proof}

Corollary~\ref{cor:rankonesing}(i) can be used to locate singular fibers in \Ceva pencils \cite[Def.~4.5]{FY07} -- see Example~\ref{ex:referee}.

\subsection{Zeros of $\tau_a$} 
We apply the method of Lagrange multipliers to find the zeros of $\tau_a$. The 
argument applies even if $\bar{Y}$ is singular.
Fix a set of homogeneous generators $\{P_1,\ldots, P_r\}$ of the defining ideal $I=I_\Lambda\subseteq S$ of $\bar{Y}$. Write $\partial_j$ for $\frac{\partial\phantom{z_j}}{\partial z_j}$. Let 
\[
\Jac_\Lambda=\begin{bmatrix}\partial_j P_i \end{bmatrix}
\]
be the Jacobian of $(P_1,\ldots, P_r)$. The rank of $\Jac_\Lambda$ at a nonsingular point $y \in \bar{Y}$ is equal to $q-p$, the codimension of $\bar{Y}$.

\begin{lemma} Let $y\in Y$. Then 
$y \in V(\tau_a)$ if and only if 
\(
\begin{bmatrix} \frac{a_0}{y_0} \cdots \frac{a_q}{y_q} \end{bmatrix}
\)
is an element of the row space of $\Jac_\Lambda(y)$.
\label{lem:rowspace}
\end{lemma}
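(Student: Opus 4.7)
The plan is to interpret $\tau_a$ in terms of its lift to the affine cone $c\bar{Y} \subseteq \C^{q+1}$ and then invoke the standard conormal sequence. In homogeneous coordinates, $\tau_a$ is the restriction to $\bar{Y}$ of the one-form $\eta_a = \sum_{i=0}^q (a_i/z_i)\,\d z_i = \d\log \mu_a$ on $(\C^*)^{q+1}$, which descends to $\P^q$ because the hypothesis $\sum_{i=0}^q a_i = 0$ provides both $\C^*$-invariance of $\mu_a$ and vanishing of the contraction of $\eta_a$ against the Euler vector field $\sum_j z_j \partial_j$. Writing $\tilde\tau_a := \eta_a|_{c\bar{Y}}$, the form $\tilde\tau_a$ is the pullback of $\tau_a$ along the principal $\C^*$-bundle $c\bar{Y} - \{0\} \to \bar{Y}$, and at a chosen representative $(y_0,\ldots,y_q)$ of $y$ its coefficient vector in the basis $\d z_0, \ldots, \d z_q$ is precisely $(a_0/y_0,\ldots,a_q/y_q)$.

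Next I would use the conormal presentation of the K\"ahler differentials for the closed embedding $c\bar{Y} \hookrightarrow \C^{q+1}$. Writing $R = \C[c\bar{Y}]$, the right-exact sequence
\[
I/I^2 \xlongrightarrow{\ P\, \mapsto\, \d P\ } \bigoplus_{j=0}^q R \cdot \d z_j \longrightarrow \Omega_{R|\C} \longrightarrow 0
\]
sends each generator $P_i$ to $\sum_j (\partial_j P_i)\,\d z_j$. Evaluating at $y$ shows that the cotangent fiber $\Omega_{R|\C} \otimes_R \C_y$ is identified with $\C^{q+1}/\mathrm{rowspace}(\Jac_\Lambda(y))$, so a covector $\sum_j c_j\,\d z_j$ vanishes in that fiber if and only if $(c_0,\ldots,c_q) \in \mathrm{rowspace}(\Jac_\Lambda(y))$. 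Applied to $c_j = a_j/y_j$, this gives the criterion claimed in the lemma, at the level of the cone $c\bar{Y}$.

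The remaining step, and the only delicate one, is to verify that descent from $c\bar{Y}$ back to $\bar{Y}$ preserves the vanishing criterion. The pullback $\pi^* \colon T^*_{[y]} \bar{Y} \hookrightarrow T^*_{y} c\bar{Y}$ identifies the cotangent space of $\bar{Y}$ with the annihilator of the Euler tangent vector $\sum_j y_j \partial_j$, so $\tau_a([y]) = 0$ is equivalent to $\tilde\tau_a(y) = 0$ provided both $\tilde\tau_a(y)$ and all rows of $\Jac_\Lambda(y)$ already lie in that Euler annihilator. Euler's identity applied to the homogeneous $P_i$ of degree $d_i$ gives $\sum_j y_j \partial_j P_i(y) = d_i P_i(y) = 0$ since $y \in \bar{Y}$, and symmetrically $\sum_j y_j (a_j/y_j) = \sum_j a_j = 0$; hence both the coefficient vector and every generator of the row space sit in the hyperplane $\{v : \sum_j y_j v_j = 0\}$, and the row-space condition on $c\bar{Y}$ transfers verbatim to $\bar{Y}$. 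The main obstacle to watch for is purely bookkeeping: reconciling the projective statement (which is invariant under rescaling $y$, since rows of $\Jac_\Lambda$ are homogeneous of degree $d_i-1$ while $a_j/y_j$ scales by $y^{-1}$) with the affine computation in $\C^{q+1}$, and making sure the argument does not require $y$ to be a smooth point of $\bar{Y}$ — which it does not, because the K\"ahler-theoretic interpretation of ``$\tau_a(y) = 0$'' given by the conormal sequence is valid at every point of $Y$.
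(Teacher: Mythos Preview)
Your proof is correct and follows essentially the same approach as the paper: both invoke the conormal sequence $I/I^2 \xrightarrow{d} \Omega[\text{ambient}]|_{\bar Y} \to \Omega[\bar Y] \to 0$, identify $d$ with right-multiplication by $\Jac_\Lambda$, and localize at $y$. The only difference is that the paper applies this directly over $\P^q$, whereas you lift to the affine cone $c\bar Y\subset\C^{q+1}$ and then descend; the Euler-annihilator bookkeeping in your last paragraph is sound but could be compressed to the observation that $\tilde\tau_a=\pi^*\tau_a$ with $\pi^*$ injective on cotangent fibers, so vanishing upstairs and downstairs are equivalent without further argument.
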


\begin{proof} 
The one-form $d \log \mu_a =\sum_{i=0}^q a_i \d \log(z_i)\in \Omega(\P^q)$ restricts to $\tau_a \in \Omega(Y)$. There is an exact sequence of $\C[\bar{Y}]$-modules
\[
I/I^2  \xrightarrow{d}  \C[\bar{Y}]\otimes_{\C[\P^q]}\Omega[\P^q]  \to \Omega[\bar{Y}] \to 0,
\]
where
$d$ is given by right multiplication by $\Jac_\Lambda$  \cite[Sec. 16.1]{Eis95}. Localization at the maximal ideal corresponding to $y$ preserves exactness of this sequence, so $\tau_a(y) \in \Omega(Y)_y$ vanishes if and only if $\tau_a(y)$ is in the image of $d$.
\end{proof}

For each $i\geq 0$, let $\Fitt_i(a,I)$ be the variety in $\P^q$ defined by the $(q+1-i) \times (q+1-i)$ minors of the $(r+1)\times (q+1)$ matrix 
\begin{equation}
\begin{bmatrix}
a_0/z_0 & \cdots & a_q/z_q \\
\partial_0 P_1 & \cdots & \partial_q P_1\\
\vdots & \ddots & \vdots \\
\partial_0 P_r & \cdots & \partial_q P_r
\end{bmatrix}.
\label{form:matrix}
\end{equation}
The ideal $\Fitt_i(a,I)$ is independent of the choice of generating set for $I$. Similarly, let $\Fitt_i(\Jac_\Lambda)$ denote the variety in $\P^q$ defined by the $(q+1-i) \times (q+1-i)$ minors of $\Jac_\Lambda$. Let  $\bar{Y}_{\rm reg} = \bar{Y} - \Sing{\bar Y}$. Then $\bar{Y}_{\rm reg} = \bar{Y} \cap \Fitt_p(\Jac_\Lambda)-\Fitt_{p+1}(\Jac_\Lambda)$, where $p=\dim \bar{Y}$. 


If $Y$ is smooth, then $V(\tau_a)$ is a Fitting variety. More generally:

\begin{corollary} $V(\tau_a) \cap \bar{Y}_{\rm reg} = \Fitt_p(a,I) \cap  \bar{Y}_{\rm reg}$.
\label{cor:smooth}
\end{corollary}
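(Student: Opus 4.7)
The plan is to translate the row-span criterion of Lemma~\ref{lem:rowspace} into a rank condition on the augmented matrix in \eqref{form:matrix}, and then combine this with the defining characterization of $\bar{Y}_{\rm reg}$ in terms of Fitting ideals of $\Jac_\Lambda$.

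First I would rephrase Lemma~\ref{lem:rowspace} as follows: for $y \in Y$, one has $y \in V(\tau_a)$ if and only if the row $[a_0/y_0,\ldots,a_q/y_q]$ lies in the row span of $\Jac_\Lambda(y)$, which in turn holds if and only if adjoining this row on top of $\Jac_\Lambda(y)$ does not increase the rank. Equivalently, the full matrix \eqref{form:matrix} evaluated at $y$ has the same rank as $\Jac_\Lambda(y)$.

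Next I would invoke the description $\bar{Y}_{\rm reg} = (\bar{Y} \cap \Fitt_p(\Jac_\Lambda)) - \Fitt_{p+1}(\Jac_\Lambda)$ stated just before the corollary: at every $y \in \bar{Y}_{\rm reg}$, $\rank \Jac_\Lambda(y) = q-p$. Consequently the augmented matrix in \eqref{form:matrix} has rank either $q-p$ or $q-p+1$, and has rank exactly $q-p$ iff all its $(q+1-p)\times(q+1-p)$ minors vanish, i.e., iff $y \in \Fitt_p(a,I)$. Combining the two steps yields the desired equivalence $y \in V(\tau_a) \iff y \in \Fitt_p(a,I)$ for every $y \in Y \cap \bar{Y}_{\rm reg}$.

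Finally, to extend the equality from $Y \cap \bar{Y}_{\rm reg}$ to all of $\bar{Y}_{\rm reg}$, I would use density: $Y$ is open and dense in $\bar{Y}$, and both sides are closed subsets of $\bar{Y}_{\rm reg}$ once one clears the denominator $z_0\cdots z_q$ from the top row of \eqref{form:matrix} so that $\Fitt_p(a,I)$ is cut out by polynomials. The main thing to keep straight is the Fitting-index convention, together with the interpretation of $V(\tau_a)$ near points of $\bar{Y}_{\rm reg}$ lying on coordinate hyperplanes (where $\tau_a$ has poles); beyond that bookkeeping, the entire corollary reduces to the single rank identity for augmented matrices described above, so I do not anticipate any serious obstacle.
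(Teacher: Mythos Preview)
Your first two paragraphs are exactly the paper's proof: at a point of $\bar{Y}_{\rm reg}$ the Jacobian $\Jac_\Lambda$ has rank $q-p$, so the augmented matrix~\eqref{form:matrix} has rank $q-p$ or $q-p+1$, and the row $[a_0/y_0,\ldots,a_q/y_q]$ lies in the row space of $\Jac_\Lambda(y)$ iff all $(q-p+1)\times(q-p+1)$ minors vanish. That is the entire argument in the paper.

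Your third paragraph, the density extension from $Y\cap\bar{Y}_{\rm reg}$ to all of $\bar{Y}_{\rm reg}$, is not in the paper and is not needed. Both $\tau_a$ and the top row of~\eqref{form:matrix} are only defined where all $z_i\neq 0$, so the corollary is already a statement about $\bar{Y}_{\rm reg}\cap(\C^*)^q$; the proof of Lemma~\ref{lem:rowspace} (via the conormal sequence for $I/I^2$) applies verbatim at any such point, not just at points of $Y$. Your proposed closure argument is also shaky as written: after clearing $z_0\cdots z_q$ you have changed the Fitting locus on the coordinate hyperplanes, and $V(\tau_a)$ is by definition contained in the locus where $\tau_a$ is regular, so it is not closed in $\bar{Y}_{\rm reg}$ in general. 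Drop that paragraph and you match the paper exactly.
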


\begin{proof} At any point of $\bar{Y}_{\rm reg}$ the rank of $\Jac_\Lambda$ is equal to $q-p=\codim \bar{Y}$, so the rank of matrix \eqref{form:matrix} is at least $q-p$. Then $\begin{bmatrix} \frac{a_0}{y_0} & \cdots & \frac{a_q}{y_q} \end{bmatrix}$ is in the row space of $\Jac(y)$ if and only if \eqref{form:matrix} has rank $q-p$, if and only if all $(q-p+1)\times (q-p+1)$ minors vanish.
\end{proof}

\begin{remark}
In fact, the intersections $\bar{Y} \cap \Fitt_i(\Jac_\Lambda), \ p\leq i\leq q$ determine a stratification of $\bar{Y}$ by locally-closed subvarieties, and $V(\tau_a)$ coincides with $\Fitt_i(a,I)$ on the stratum $\Fitt_i(\Jac_\Lambda)- \Fitt_{i+1}(\Jac_\Lambda)$, by the same argument.
\label{rmk:strata}
\end{remark}

In view of Proposition~\ref{prop:zeros} we study the zeros of cocycles for $\tau_a$. Since $\dim Y=p$, every $\psi \in \Omega^p(Y)$ is a cocycle for $\tau_a$.  For $0\leq i\leq q$, set $\tau_i=\frac{\d y_i}{y_i}$, so that $\tau_a=\sum_{i=0}^q a_i \tau_i = \sum_{i=1}^q a_i (\tau_i - \tau_0)=\sum_{i=1}^q \d \log(y_i/y_0)$. 

\begin{proposition} The intersection 
\[
\bigcap \{V(\xi) \mid \xi \in \Omega_\C^p(Y), \tau_a\we \xi=0\}
\]
is contained in $\Sing(\bar{Y})$.
\label{prop:nozeros}
\end{proposition}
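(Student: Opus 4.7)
The plan is to prove the contrapositive: for every $y \in Y$ that is a smooth point of $\bar{Y}$, I will exhibit a $p$-form $\xi \in \Omega^p_\C(Y)$ satisfying $\tau_a\we\xi = 0$ and $\xi(y)\neq 0$, so that $y$ cannot lie in the intersection.

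First I would observe that every element of $\Omega^p_\C(Y)$ is automatically a cocycle for $\tau_a$. Indeed, since $\bar Y$ is an irreducible variety of dimension $p$, the $\C(\bar Y)$-vector space $\Omega(\bar Y)$ has dimension equal to the transcendence degree of $\C(\bar Y)$ over $\C$, which is $p$ (as used in the proof of Proposition~\ref{prop:vanishing}). Hence $\bigwedge^{p+1}\Omega(\bar Y)=0$, and $\tau_a\we\xi$ is zero as a rational form for any $\xi$. The intersection in the statement therefore reduces to $\bigcap\{V(\xi)\mid \xi\in \Omega^p_\C(Y)\}$, taken over all $p$-forms.

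Second, at a smooth point $y\in Y$, I would produce a nonvanishing $p$-form from the differences $\tau_i-\tau_0$. Since $Y\subseteq (\C^*)^q$ lies in the affine chart $z_0\neq 0$, the differentials $d(z_i/z_0)$ for $1\leq i\leq q$ span $T^*_y\P^q$, and their projections surject onto the $p$-dimensional cotangent space $T^*_y\bar Y$. Hence some $p$-subset $\{i_1,\ldots,i_p\}\subseteq\{1,\ldots,q\}$ maps to a basis of $T^*_y\bar Y$, i.e., $d(z_{i_1}/z_0)\we\cdots\we d(z_{i_p}/z_0)$ is nonzero at $y$. Because $z_i(y)/z_0(y)$ is a unit for each $i$, replacing each $d(z_i/z_0)$ by $d\log(z_i/z_0)=\tau_i-\tau_0$ only rescales this wedge by a nonvanishing factor at $y$. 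Therefore
\[
\xi := (\tau_{i_1}-\tau_0)\we\cdots\we(\tau_{i_p}-\tau_0)
\]
is a regular $p$-form on $Y$ with $\xi(y)\neq 0$, and by the first step $\tau_a\we\xi=0$, so $y\notin V(\xi)$ and $y$ is not in the intersection.

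The construction is essentially formal; the two things to verify are the regularity of $\tau_i-\tau_0$ on all of $Y$ (immediate from $Y\subseteq(\C^*)^q$, which makes each $z_i/z_0$ a unit on $Y$) and the Jacobian-criterion step producing the coordinate $p$-subset at $y$. I do not anticipate any serious obstacle beyond unwinding these definitions, and notably the argument does not depend on $a$ at all, since the cocycle condition is vacuous once $\dim Y=p$.
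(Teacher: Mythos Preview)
Your argument is correct and follows essentially the same approach as the paper: both use the specific $p$-forms $\xi_I=(\tau_{i_1}-\tau_0)\we\cdots\we(\tau_{i_p}-\tau_0)$, observe that $\tau_a\we\xi_I=0$ because $\dim Y=p$, and then note that at a smooth point of $\bar Y$ some $\xi_I$ is nonzero. You have supplied more detail than the paper does for this last step, but the strategy is identical.
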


\begin{proof} For $I=\{i_1< \cdots <i_p\}_< \subseteq \{1,\ldots, q\}$, consider the $p$-form 
\[
\xi_I=(\tau_{i_1}-\tau_0) \we \cdots  \we (\tau_{i_p}-\tau_0).
\] 
Then $\tau_a \we \xi_I=0$. But $\dim \bar{Y}=p$, so at each point of $\bar{Y}_{\rm reg}$, $\xi_I$ must be nonzero for some $I$.
\end{proof}



\subsection{The case $q=p+1$} Suppose $D$ is a singular subspace of rank $\dim(D)-1$, with integral basis $\Lambda=\{\omega_{\xi_1}, \ldots ,\omega_{\xi_q}\}$. Then $\bar{Y}$ is defined by a single homogeneous polynomial $P(z_0,\ldots, z_q)$. This hypothesis holds for all the examples we know, with one exception: the Hessian arrangement, which supports a rational singular subspace of dimension three and rank one -- see Examples~\ref{ex:hessian} and ~\ref{ex:hessian2}. 

%
%
%
%

Consider the rational mapping
\begin{equation}
\rho=[z_0\partial_0 P: \cdots: z_q\partial_q P] \colon \P^q \rightarrowtail \P^q.
\label{form:rho}
\end{equation}
This map has poles along $\Sing(\bar{Y})$ and $\Sing(\bar{Y} \cap \C^I)$ where $\C^I$ is the coordinate subspace $z_i=0, \, i \in I$.  It is regular on $\bar{Y}_{\rm reg} \cap (\C^*)^q$. By Euler's formula, $\sum_{j=0}^q z_j \partial_j P=\deg(P) P$, so the image of $\bar{Y}$ under $\rho$ is contained in the hyperplane $\sum_{j=0}^q z_j=0$.

\begin{proposition} Suppose $\bar{Y}$ is the hypersurface given by $P=0$, and $\rho$ is as given in \eqref{form:rho}. Then 
\begin{enumerate}
\item $V(\tau_a)\neq \emptyset$ if and only if $a\in \rho(Y)$. 
\item If $a \in \rho(Y)$ then $V(\tau_a)=\rho^{-1}(a)$. 
\item For generic $a \in \rho(Y)$, $V(\tau_a)$ is nonempty and every component has codimension equal to $\dim \rho(\bar{Y})$.
\end{enumerate}
\label{prop:fibers}
\end{proposition}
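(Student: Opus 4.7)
My plan is to exploit the hypothesis that $\bar{Y}$ is a hypersurface: $\Jac_\Lambda$ collapses to the single row $(\partial_0 P,\ldots,\partial_q P)$, and at any smooth point $y \in \bar{Y}_{\rm reg}$ this gradient is nonzero and spans a one-dimensional row space. Applying Lemma~\ref{lem:rowspace}, the condition $y \in V(\tau_a)$ becomes: $(a_0/y_0,\ldots,a_q/y_q)$ is a scalar multiple of $(\partial_0 P(y),\ldots,\partial_q P(y))$. Since $Y \subseteq (\C^*)^q$ the coordinates $y_i$ are nonzero, so clearing denominators this is equivalent to $(a_0,\ldots,a_q)$ being proportional to $(y_0\partial_0 P(y),\ldots,y_q\partial_q P(y))$, i.e.\ $[a] = \rho(y)$ in $\P^q$.

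The next step is to dispose of singular points. At a singular point of $\bar{Y}$ the gradient vanishes, so the row-space condition of Lemma~\ref{lem:rowspace} forces $a=0$; hence for $a \neq 0$, $V(\tau_a) \subseteq Y \cap \bar{Y}_{\rm reg}$, the locus where $\rho$ is regular, and the previous paragraph identifies $V(\tau_a)$ with the fiber $\rho^{-1}(a) \cap Y$. Parts (i) and (ii) then follow at once: $V(\tau_a)$ is nonempty exactly when $a$ lies in the image $\rho(Y)$, and in that case coincides with $\rho^{-1}(a)$. (Proposition~\ref{prop:trivial} rules out the degenerate $a = 0$ case.)

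For part (iii), I would invoke the theorem on dimension of fibers for the dominant rational map $\rho \colon \bar{Y} \rightarrowtail \rho(\bar{Y})$: over a generic point of the image, the fiber has pure codimension $\dim\rho(\bar{Y})$ in $\bar{Y}$. Since $Y$ is Zariski-open and dense in the irreducible variety $\bar{Y}$, the image $\rho(Y)$ is constructible and Zariski-dense in $\rho(\bar{Y})$, so a generic point of $\rho(Y)$ is also generic in $\rho(\bar{Y})$. Combined with part (ii), this shows that for generic $a\in\rho(Y)$, $V(\tau_a) = \rho^{-1}(a)$ is nonempty with each component of codimension equal to $\dim\rho(\bar{Y})$.

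No serious obstacle is anticipated; the only technical point requiring care is the interplay between the open subset $Y$, the closed variety $\bar{Y}$, and the domain of regularity $\bar{Y}_{\rm reg} \cap (\C^*)^q$ of $\rho$. The analysis of singular points above is precisely what keeps $V(\tau_a)$ from straying into that bad locus when $a \neq 0$, and lets parts (i)--(ii) be stated cleanly as a set-theoretic identification with the fibers of $\rho$.
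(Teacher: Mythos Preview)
Your proposal is correct and follows essentially the same approach as the paper, which simply says that parts (i) and (ii) follow from Lemma~\ref{lem:rowspace} and part (iii) from part (ii). You supply the details the paper omits: unpacking the row-space condition when $\Jac_\Lambda$ is the single gradient row of $P$, observing that singular points of $\bar{Y}$ in $Y$ cannot lie in $V(\tau_a)$ for $a\neq 0$, and invoking the fiber-dimension theorem for (iii).
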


\begin{proof} The first two assertions follow from Lemma~\ref{lem:rowspace}, and the third follows from the second. 
\end{proof}

\begin{corollary} Suppose
\begin{enumerate}
\item $\Phi_\Lambda$ is nonsingular on $U$,
\item $\bar{Y}$ is nonsingular and intersects the coordinate hyperplanes transversely,
\item $\dim \rho(\bar{Y})=\dim \bar{Y}$, and
\item $\Phi_\Lambda(U)=\bar{Y}\cap(\C^*)^q$.  
\end{enumerate}
If $\d \log(\Phi)=\sum_{i=1}^q a_i\omega_{\xi_i}$ and $a_i\neq 0$ for all $i$, then $\crit(\Phi)$ is nonempty and every component has codimension $q-1$.
\label{cor:birat}
\end{corollary}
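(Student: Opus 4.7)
The plan is to identify $\crit(\Phi)$ with $\Phi_\Lambda^{-1}(V(\tau_a))$ via Theorem~\ref{thm:crit_fixed}, then reduce nonemptiness to the condition $a\in \rho(Y)$. First, hypothesis (i) kills $\Sing(\Phi_\Lambda)$ and (ii) kills $\Sing(\bar Y)$, so $S_\Lambda=\emptyset$ and Theorem~\ref{thm:crit_fixed} gives $\crit(\Phi)=\Phi_\Lambda^{-1}(V(\tau_a))$. By (iv), $\Phi_\Lambda\colon U\to Y$ is surjective, so nonemptiness of $\crit(\Phi)$ is equivalent to nonemptiness of $V(\tau_a)$, which by Proposition~\ref{prop:fibers}(i) (applicable since $q=p+1$ so $\bar Y$ is a hypersurface) reduces to the condition $a\in\rho(Y)$.

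The crux is to show that the rational map $\rho\colon\bar Y\rightarrowtail H=\{\sum_j z_j=0\}$ extends to a morphism on all of $\bar Y$. Suppose $\rho$ were undefined at $\bar y$. Then $z_j\partial_jP(\bar y)=0$ for every $j$; setting $J=\{j:z_j(\bar y)=0\}$ yields $\partial_kP(\bar y)=0$ for $k\notin J$, so $dP(\bar y)\in\mathrm{span}\{dz_j:j\in J\}$. But by transversality of $\bar Y$ with the coordinate flat $\bigcap_{j\in J}\{z_j=0\}$, the differential $dP(\bar y)$ and the covectors $\{dz_j:j\in J\}$ together must be linearly independent in the cotangent space at $\bar y$ --- contradiction. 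So $\rho$ is a morphism. Since $\bar Y$ and $H$ are both projective of dimension $p$ and $\rho$ is dominant by (iii), the image $\rho(\bar Y)$ is closed of dimension $p$ in the irreducible variety $H$, so $\rho$ is surjective onto $H$. For any $a\in H$ with $a_j\ne 0$ for every $j$, a preimage $\bar y\in\rho^{-1}(a)$ must have $z_j(\bar y)\partial_jP(\bar y)\ne 0$ as a projective representative, hence $z_j(\bar y)\ne 0$ for all $j$, forcing $\bar y\in Y$. Thus $a\in\rho(Y)$ and $V(\tau_a)=\rho^{-1}(a)$ is nonempty, giving $\crit(\Phi)\ne\emptyset$.

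For the codimension statement, Corollary~\ref{cor:fibers} expresses $\crit(\Phi)$ as a union of fibers of the submersion $\Phi_\Lambda$; concretely, an irreducible component of $\crit(\Phi)$ has codimension $p-d$ in $U$, where $d$ is the dimension of the corresponding component of $V(\tau_a)=\rho^{-1}(a)$. So the claim that every component has codimension $q-1=p$ amounts to $\dim\rho^{-1}(a)=0$. This is the hardest step: hypothesis (iii) delivers only \emph{generic} finiteness of $\rho$, so the natural way to finish is to argue that $\rho\colon\bar Y\to H$ is in fact a finite morphism (being proper and quasi-finite). Quasi-finiteness at our specific $a$ should follow from the same transversality argument: one ought to show that positive-dimensional components of $\rho^{-1}(a)$ force a tangency between $\bar Y$ and some coordinate flat, an obstruction that the normal-crossings form of (ii) is designed to rule out. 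Alternatively, one could strengthen the conclusion by combining Corollary~\ref{cor:codimp} (which gives codimension $\le p$ automatically) with a genericity refinement of (iii) extracted from Proposition~\ref{prop:fibers}(iii), though this would be a weaker statement than asserted.
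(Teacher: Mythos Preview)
Your approach matches the paper's, and your nonemptiness argument is correct --- indeed more carefully spelled out than the paper's, which simply notes that hypothesis (ii) forces $\rho$ to be regular on all of $\bar Y$, concludes $\rho(\bar Y)=\Delta$ from (iii), and asserts $\Delta\cap(\C^*)^q=\rho(Y)$.

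The gap you flag in your third paragraph, however, is real, and you do not close it. You need $\rho^{-1}(a)$ to be finite for the specific $a$ at hand, not just generically, and your suggested route via a ``transversality forces quasi-finiteness'' argument is left as speculation. The missing step is already in your hands. In your second paragraph you showed that whenever all $a_j\ne 0$, every preimage point lies in $Y\subseteq(\C^*)^q$; that is, $\rho^{-1}(a)\subseteq(\C^*)^q$. But $\rho$ is a morphism on the projective variety $\bar Y$, so the fiber $\rho^{-1}(a)$ is closed in $\bar Y$, hence itself projective. A projective variety contained in the affine variety $(\C^*)^q$ is zero-dimensional. Thus $V(\tau_a)=\rho^{-1}(a)$ is a nonempty finite set, and since $\Phi_\Lambda$ is a submersion by (i), the preimage $\crit(\Phi)=\Phi_\Lambda^{-1}(V(\tau_a))$ is equidimensional of codimension exactly $q-1$. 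The paper's own proof is terse at this step, ending only with ``the fibers of $\Phi_\Lambda$ all have codimension $q-1$''; the projective-in-affine observation is what makes the codimension claim go through for every such $a$ rather than merely generic ones.
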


\begin{proof} We have observed that $\rho(\bar{Y})$ is contained in the hyperplane $\Delta$ defined by $\sum_{i=0}^q z_i=0$.  The second hypothesis ensures that $\Sing(Y \cap \C^I)$ is empty for every $I\subseteq \{0,\ldots, q\}$. Then  $\rho$ is regular on $\bar{Y}$. By the third condition, $\dim \rho(\bar{Y})=q-1=\dim \Delta$. Since $\Delta$ is irreducible, we conclude $\rho(\bar{Y})=\Delta$. Since $Y=\bar{Y} \cap (\C^*)^q$ by (iii), $\Delta \cap (\C^*)^q =\rho(Y)$. The result then follows from Proposition~\ref{prop:fibers} and Theorem~\ref{thm:crit_fixed}, since (i) and (ii) imply $S_\Lambda=\emptyset$ and the fibers of $\Phi_\Lambda$ all have codimension $q-1$.
\end{proof}

The hypotheses in Corollary~\ref{cor:birat} are satisfied in many examples. The third condition is automatic in case $q=2$. The last two conditions hold in every example we know where (i) and (ii) hold.

\subsection{Examples}
\begin{example}[Example~\ref{ex:braid}, continued]\label{ex:braid_v2}
Let $D$ be the rational singular subspace of $A^1 \cong \C^5$ with basis $\Lambda=\{\omega_{010}-\omega_{100}, \ \omega_{001}-\omega_{100}\}$. 

We have 
\[
\Phi_\Lambda =[\Phi_{100}:\Phi_{010}:\Phi_{001}]=[x(y-z):y(x-z):z(x-y)].
\]
$\Phi_\Lambda$ is nonsingular on $U$, and $\bar{Y}=Y \cap (\C^*)^5$. The components of $\Phi_\Lambda$ satisfy the homogeneous relation $\Phi_{100}-\Phi_{010}+\Phi_{001}=0$, so $\bar{Y}$ is the line $z_0-z_1+z_2=0$ in $\P^2$. Corollary~\ref{cor:birat} implies $\crit(\Phi_{abc})$ is nonempty if and only if  $a+b+c=0$ and $a, b$ and $c$ are nonzero. In this case,  
\[
\crit(\Phi_{abc})=(\rho\circ \Phi_\Lambda)^{-1}([a:b:c])=\Phi_\Lambda^{-1}([a:-b:c]),
\] that is, $\crit(\Phi_{abc})$ 
is given by 
\begin{align*}
[x(y-z):y(x-z):z(x-y)]&=[a:-b:c], \ \text{or, equivalently}\\
[x(y-z):y(z-x)]&=[a:b]
\end{align*}
as we found earlier.
It has codimension one in $\P^2$. In this example the map $\Phi_\Lambda$ has connected generic fiber, hence $\crit(\Phi_{abc})$ is connected. If $a, b$, or $c$ is zero, and $a+b+c=0$, then $\crit(\Phi_{abc})$ is empty.

The basis $\Lambda$ above has special properties that resulted in the linear syzygy of master functions: we will revisit this in the next section. By way of comparison, consider the basis $\Lambda'=\{\omega_{120}-\omega_{012},\omega_{300}-\omega_{012}\}$. 
%
Then 
\[
\begin{split}
\Phi_{\Lambda'}&=[\Phi_{010}\Phi_{001}^2:\Phi_{100}\Phi_{010}^2:\Phi_{100}^3] \\
&=[yz^2(x-y)^2(x-z):xy^2(x-z)^2(y-z):x^3(y-z)^3].
\end{split}
\]
A {\em Macaulay~2} calculation \cite{M2} shows that $\Phi_{\Lambda'}$ is nonsingular on $U$. Using the identity $\Phi_{100}-\Phi_{010}+\Phi_{001}=0$, one finds that the Zariski closure $\bar{Y'}$ of $Y'=\Phi_{\Lambda'}(U)$ is defined by
\[
z_1^3-z_0^2z_2-4z_0z_1z_2-2z_1^2z_2+z_1z_2^2=0.
\]
Then $\bar{Y'}$ is an irreducible cubic with a node at $[z_0:z_1:z_2]=[-2:1:-1]$. This is a point of $Y'$. It is also in $\tilde{\Phi}_{\Lambda'}(E)$, where $\tilde{\Phi}_{\Lambda'}$ is the lift of $\Phi_{\Lambda'}$ to the blow-up of $\P^2$ at the four base points, and $E$ is the exceptional divisor over $[0:1:0]$.

The image of $Y'$ under $\rho$ misses the three points $[0:-1:1]$, $[-2:1:1]$, and $[-2:3:-1]$, corresponding to the three one-forms $\omega_{100}- \omega_{010}, \ \omega_{100}- \omega_{001}$, and $\omega_{010}- \omega_{001}$ in $D$ that have empty zero locus. In particular $Y' \neq \bar{Y'}\cap (\C^*)^5$.
\end{example}
\begin{example} 
\label{ex:referee}
Let \A\ be the arrangement with defining equation $Q=Q_0Q_1Q_2$ where 
\[
\begin{aligned}
Q_0&=(x+z)(2x-y-z)(2x+y-z)\\
Q_1&=(x-z)(2x+y+z)(2x-y+z), \ \ \text{and}\\
Q_2&=(y+z)(y-z)z
\end{aligned}
\]

The image of $\Phi_\Lambda=[Q_0:Q_1:Q_2] \colon \P^2 \rightarrowtail \P^2$ is the line $z_0-z_1+2z_2=0$. The fibers are the cubics passing through nine points, three on each of three concurrent lines -- \A\ is a specialization of the Pappus arrangement. One of these cubics is $x(4x^2-y^2-3z^2)=0$. Although $\bar{Y}$ is smooth, $\Phi_\Lambda$ is singular at two points of $U$, given by $x=y^2+3z^2=0$. These two points lie in $\crit(Q_0^aQ_1^bQ_2^c)$ for every $a,b,c$ with $a+b+c=0$.

Similarly, if \A\ the subarrangement of the Hessian arrangement (\ref{ex:hessian}) defined by $P_1P_2P_3=0$, then every critical set $\crit(P_1^aP_2^bP_3^c)$, $a+b+c=0$, contains the three points $[1:0:0]$, $[0:1:0]$, $[0:0:1]$ of $U$ where the fourth special fiber $xyz=0$ is  singular. The map $\Phi_\Lambda=[P_1:P_2:P_3]$ is singular at these points, although $\bar{Y}$ is smooth, given by $\zeta P_1+\zeta^2 P_2 + P_3=0$. (See also Example~\ref{ex:hessian2}.)  
\end{example}
\begin{remark}
In fact, Corollary~\ref{cor:rankonesing} can be used to detect \Ceva pencils \cite{FY07} (see ~\ref{subsec:ceva}). For instance the master function 
\[
\Phi = \frac{x(y-z)}{y(x-z)}
\]
 has critical points $[0:1:0]$ and $[1:1:0]$, the singular points of the third completely decomposable fiber in Example~\ref{ex:braid}.
\end{remark}
\begin{example}[Example~\ref{ex:nondecomp}, continued]
In this example,
the one-form $\omega$ has no decomposable $2$-cocycle. Indeed there are no singular subspaces of $A^1$ of rank $p=1$ or $p=2$. For $p=1$ this holds because \A\ is 2-generic. For $p=2$ one can verify the statement computationally using the approach of \cite{LiFiSch09b}; in \cite{BFW11} we give a combinatorial argument based on Theorem~\ref{thm:rank}. 
Setting
\[
\Psi=[{x\over y+z+w}:{y\over x+z+w}:
{z\over x+y+w}:{w\over x+y+z}] \colon \P^3 \rightarrowtail \P^3,
\]
we have $\omega=\d \log(\Phi) =\Psi^*(\tau)$ where 
\[
\tau=a\, \d \log(y_0)+b\, \d \log(y_1)+c\, \d \log(y_2)+d\, \d \log(y_3).
\]
The map $\Psi$ is dominant. The one-dimensional critical locus of $\Phi$ is a fiber of a different map 
\[
[x(y+z+w):y(x+z+w):z(x+y+w):w(x+y+z)].
\]
\end{example}
 \end{section}

\begin{section}{Linear dependence among master functions} 
In this section we consider a singular subspace $D$ with an integral basis $\Lambda$ such that $Y_\Lambda$ is linear, i.e., the syzygy ideal $I_\Lambda$ is generated by homogeneous linear forms. We saw this phenomenon 
in Example~\ref{ex:braid_v2}.
We start with a trivial example that will be useful for what follows.

\subsection{Example: equations for the critical locus}
\label{subsec:trivial}
Suppose \A\ is an essential arrangement of $n+1$ hyperplanes in $\P^\ell$, with $n>\ell$. Then $D=A^1$ is a singular subspace, with integral basis $\{\omega_i - \omega_0 \mid 1\leq i\leq n\}$. The corresponding rational mapping is 
\[
\Phi=\Phi_\Lambda = [\alpha_0: \cdots \colon \alpha_n] \colon \P^\ell \rightarrowtail \P^n,
\]
and $\bar{Y}=\bar{Y}_\Lambda$ is a linear subvariety of $\P^n$. The reader will recognize that this is the usual identification of a labelled vector configuration, $(\alpha_0,\ldots,\alpha_n)$, with a point in the Grassmannian of $\ell$-planes in $\P^n$. Let us denote $\bar{Y}_\Lambda$ by $L_\A$. (This is an abuse of notation; $\bar{Y}_\Lambda$ depends on the choice of defining forms $\alpha_i$.)

Most of the results of the previous section are vacuous in this situation, but Lemma~\ref{lem:rowspace} tells us something:
\begin{theorem} Let $B=\begin{bmatrix} b_{ij} \end{bmatrix}$ be an $(\ell+1) \times (n+1)$ matrix such that $L_\A$ is the kernel of $B$. Then, for any $\lambda \in \C^n$, the critical locus of $\Phi_\la$ is defined by the $\binom{n+1}{\ell+1}$ equations  
\[
\sum_{i\in I} \sigma(i,I)b_{I - \{i\}} \frac{\la_i}{\alpha_i(x)} = 0
\]
where $I$ ranges over the subsets of $\{0,\ldots, n\}$ of size $\ell+1$, the coefficient $b_J$ is given by $b_J=\det \begin{bmatrix} b_{ij} \mid  j\in J\end{bmatrix}$, and $\sigma(i,I)=\pm 1$, depending on the position of $i$ in $I$. 
\label{thm:recip}
\end{theorem}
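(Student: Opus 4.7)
The plan is to derive Theorem~\ref{thm:recip} as a direct specialization of Lemma~\ref{lem:rowspace} to the linear setting. With $D = A^1$ and integral basis $\Lambda = \{\omega_i - \omega_0 \mid 1 \leq i \leq n\}$, the rational map $\Phi_\Lambda = [\alpha_0:\cdots:\alpha_n]\colon \P^\ell \rightarrowtail \P^n$ has image $L_\A$; the defining ideal $I_\Lambda$ is generated by the linear forms $P_i = \sum_j b_{ij}z_j$ whose coefficients form the rows of $B$, so $\Jac_\Lambda = B$. Because $L_\A$ is smooth and $\Phi_\Lambda$ is the restriction of a linear map (everywhere nonsingular on $U$), $S_\Lambda \cap U = \emptyset$, and Theorem~\ref{thm:crit_fixed} yields $\crit(\Phi_\la) = \Phi_\Lambda^{-1}(V(\tau_\la))$.

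Applying Lemma~\ref{lem:rowspace} at $y = \Phi_\Lambda(x) = (\alpha_0(x),\ldots,\alpha_n(x))$ with $a = \la$, the point $x \in U$ lies in $\crit(\Phi_\la)$ if and only if the row vector $v(x) := \bigl[\la_0/\alpha_0(x),\ldots,\la_n/\alpha_n(x)\bigr]$ belongs to the row space of $B$. Equivalently, appending $v(x)$ as a new row to $B$ must not increase the rank, so every maximal square minor of the augmented matrix that uses the new row must vanish.

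For each $(\ell+1)$-subset $I \subseteq \{0,\ldots,n\}$, restricting to the columns indexed by $I$ and Laplace-expanding the resulting determinant along the top row produces
\[
\sum_{i \in I} \sigma(i,I)\, b_{I-\{i\}}\, \frac{\la_i}{\alpha_i(x)},
\]
with $b_{I-\{i\}}$ the relevant $\ell \times \ell$ minor of $B$ and $\sigma(i,I) = \pm 1$ the cofactor sign. Setting each such expansion to zero and letting $I$ range over the $\binom{n+1}{\ell+1}$ subsets of size $\ell+1$ yields the claimed system.

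The main bookkeeping step will be to confirm the Laplace signs $\sigma(i,I)$ match the conventions of the theorem and to reconcile the stated shape of $B$ with the sizes of the minors $b_{I-\{i\}}$ used in the cofactor expansion (in effect, one wants $B$ arranged so that the augmented matrix supplies $(\ell+1) \times (\ell+1)$ minors of the stated form). Beyond these details, the proof is a direct unpacking of Lemma~\ref{lem:rowspace}.
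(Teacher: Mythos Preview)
Your proposal is correct and follows essentially the same route as the paper: identify $\Jac_\Lambda$ with $B$, note $S_\Lambda=\emptyset$, and combine Theorem~\ref{thm:crit_fixed} with Lemma~\ref{lem:rowspace} at $a=\lambda$. The paper's proof stops there and leaves the passage from ``$v(x)$ lies in the row space of $B$'' to the explicit minor equations implicit; you have helpfully spelled out the Laplace expansion and flagged the size bookkeeping for $B$ and the minors $b_{I-\{i\}}$, which the paper does not address.
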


\begin{proof} The linear forms defined by the rows of $B$ generate the syzygy ideal $I_\Lambda$. Then the Jacobian $\Jac_\Lambda$ is equal to $B$.  With the observation that $S_\Lambda=\emptyset$, setting $a=\lambda$ in Lemma~\ref{lem:rowspace} and applying Theorem~\ref{thm:crit_fixed} yields the claim.
\end{proof}

The columns of the matrix $B$ above define a realization of the matroid dual to the matroid of \A. In Example~\ref{ex:braid}, 
\[
B= \begin{bmatrix}0&-1&1&0&0&1 \\ -1&0&1&0&1&0 \\ -1&1&0&1&0&0
\end{bmatrix}.
\]
(The matroid of \A\ is self-dual.) Theorem~\ref{thm:recip} says that $\crit(\Phi_{abc})$ is defined by the $4 \times 4$ minors of 
\[
\begin{bmatrix}
\frac{a}{x}&\frac{b}{y}&\frac{c}{z}&\frac{c}{x-y}&\frac{b}{x-z}&\frac{a}{y-z}\\
0&-1&1&0&0&1 \\ -1&0&1&0&1&0 \\ -1&1&0&1&0&0
\end{bmatrix}.
\]
These 15 equations reduce to the single equation found in Example~\ref{ex:braid}.

\subsection{Linear hypersurfaces} Suppose $\bar{Y}$ is a linear and $p=\rank(D)=q-1$, i.e., $\bar{Y}$ is a linear hyperplane in $\P^q$. Let $P(z)=\sum_{j=0}^q b_j z_j$ be a generator for the syzygy ideal. It is no loss to assume $b_j\neq 0$ for all $j$, or equivalently, $\bar{Y}$ is not contained in any coordinate hyperplane. Otherwise some proper subset of $\{\Phi_{\nu_0}, \ldots, \Phi_{\nu_q}\}$ is linearly dependent. 

\begin{proposition}
Suppose $\bar{Y}$ is a hyperplane not contained in any coordinate hyperplane in $\P^q$.  Let $\la=\sum_{i=0}^q a_i \nu_i$. Then for generic $a$ satisfying $\sum_{i=0}^q a_i=0$, $\crit(\Phi_\la)-S_\Lambda$ is nonempty and every component of $\crit(\Phi_\la)-S_\Lambda$ has codimension equal to the rank of $D$.
\label{prop:linhyp}
\end{proposition}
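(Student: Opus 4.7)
The plan is to specialize Proposition~\ref{prop:fibers} to this linear setting, where the rational map $\rho$ of \eqref{form:rho} becomes an honest projective linear automorphism of $\P^q$. Explicitly, since $\bar{Y}$ is cut out by $P(z) = \sum_{j=0}^q b_j z_j$ with every $b_j\neq 0$, we have $z_j\partial_j P = b_j z_j$, so $\rho = [b_0z_0 : \cdots : b_qz_q]$ is a linear automorphism that carries $\bar{Y}$ isomorphically onto the hyperplane $\Delta = \{\sum_{j=0}^q z_j = 0\}$ and preserves the torus $(\C^*)^{q+1}$. Consequently, for any $a$ with $\sum_i a_i = 0$ and all $a_i\neq 0$, the point $a$ lies in $\Delta\cap(\C^*)^{q+1}$ and has a unique preimage $y_a = [a_0/b_0 : \cdots : a_q/b_q]$ in $\bar{Y}\cap(\C^*)^{q+1}$. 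Whenever $y_a\in Y$, Proposition~\ref{prop:fibers}(ii) yields $V(\tau_a) = \{y_a\}$.

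Next I would verify that, for $a$ in a Zariski-dense open subset of the hyperplane $\{\sum_i a_i = 0\}\subseteq \C^{q+1}$, the point $y_a$ lies in $Y$ and is also a regular value of $\Phi_\Lambda$. The essentiality assumption together with the fact that each $\Phi_{\nu_i}$ is a product of the nonvanishing forms $\alpha_j$ on $U$ forces $\Phi_\Lambda \colon U\to \bar{Y}\cap(\C^*)^{q+1}$ to be an honest dominant morphism, so its image $Y$ is constructible and contains a dense open subset $Y^\circ \subseteq \bar{Y}$. Since $\bar{Y}$ is linear and hence smooth, $S_\Lambda = \Sing(\Phi_\Lambda)$ is a proper closed subvariety of $U$, and generic smoothness supplies a dense open $R\subseteq \bar{Y}$ of regular values of $\Phi_\Lambda$. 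The linear bijection $a\mapsto y_a$ identifies $\{\sum_i a_i = 0\}\cap (\C^*)^{q+1}$ with $\bar{Y}\cap(\C^*)^{q+1}$, so the preimage of $Y^\circ\cap R$ is a dense Zariski-open subset $W$ of the hyperplane $\{\sum_i a_i = 0\}$.

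To conclude, Theorem~\ref{thm:crit_fixed} gives, for any $a\in W$,
\[
\crit(\Phi_\la) - S_\Lambda \;=\; \Phi_\Lambda^{-1}(V(\tau_a)) - S_\Lambda \;=\; \Phi_\Lambda^{-1}(y_a) - S_\Lambda.
\]
Because $y_a$ is a regular value, the fiber $\Phi_\Lambda^{-1}(y_a)$ is nonempty, disjoint from $\Sing(\Phi_\Lambda) = S_\Lambda$, and has pure codimension $p = \dim Y = \rank(D)$ in $U$. The main technical point I expect to have to be careful with is the bookkeeping at $S_\Lambda$: one needs $S_\Lambda$ to be a proper subset of $U$ for ``regular value of $\Phi_\Lambda$'' to cut out a meaningful dense open in $\bar{Y}$, and the linearity of $a\mapsto y_a$ must be used to translate the two open conditions on $y_a$ into a single Zariski-open condition on $a$ along the hyperplane $\sum_i a_i = 0$.
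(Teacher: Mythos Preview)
Your argument is correct and follows essentially the same line as the paper's: both observe that with $P(z)=\sum_j b_jz_j$ and all $b_j\neq 0$, the map $\rho=[b_0z_0:\cdots:b_qz_q]$ is a linear automorphism of $\P^q$ carrying $\bar{Y}$ isomorphically to $\Delta=\{\sum_j z_j=0\}$, and then invoke Proposition~\ref{prop:fibers}. The paper stops there, saying simply that the result ``follows,'' whereas you spell out the passage from $V(\tau_a)$ back to $\crit(\Phi_\la)-S_\Lambda$ via Theorem~\ref{thm:crit_fixed}, and you add a generic-smoothness step to guarantee that the fiber $\Phi_\Lambda^{-1}(y_a)$ is nonempty of pure codimension $p$. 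That extra care is warranted: Proposition~\ref{prop:fibers} controls the codimension of $V(\tau_a)$ in $Y$, and one still needs the fiber-dimension argument to transfer this to codimension in $U$, which the paper leaves implicit.
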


\begin{proof} The syzygy ideal $I_\Lambda$ is generated by a linear polynomial $P(z)=\sum_{j=0}^q b_jz_j$, and $b_j\neq 0$ for $0\leq j\leq q$ by hypothesis. 
The map $\rho=[b_0z_0:\cdots :b_qz_q]$ of \eqref{form:rho} is an automorphism of $\P^q$ since all of the $b_j$ are nonzero.
Consequently, $\rho$ maps the hyperplane $\bar{Y}$ isomorphically to the hyperplane $\Delta$ defined by $\sum_{i=0}^q z_i=0$.  The result then follows from Proposition~\ref{prop:fibers}.
\end{proof}


\subsection{The general case} Suppose the singular subspace $D\subseteq A^1$ has an integral basis $\Lambda$ for which $\bar{Y}=\bar{Y}_\Lambda$ is a linear variety in $\P^q$. Choose a linear isomorphism $\phi : \P^p \to \bar{Y}$, given by a $(q+1) \times (p+1)$ matrix $B=\begin{bmatrix} b_{ij} \end{bmatrix}$. Assume $D$ is not contained in any coordinate hyperplane. Then the intersections of $\bar{Y}$ with the coordinate hyperplanes in $\P^q$ determine an essential arrangement \B\ of $q+1$ not necessarily distinct hyperplanes in $\P^p$, with defining forms $\beta_i(x)=\sum_{i=0}^p b_{ij}x_j$, for $0\leq i\leq q$. By construction, the subspace $L_\B$ of $\P^q$, as described in section \ref{subsec:trivial}, is equal to $\bar{Y}$. 

\begin{theorem} 
Suppose $\bar{Y}_\Lambda=L_\B$ is a linear subspace not contained in any coordinate hyperplane in $\P^q$, and $Y=\bar{Y}\cap (\C^*)^q$. Let $\la=\sum_{i=0}^q a_i \nu_i$ with $\sum_{i=0}^q a_i=0$.  Let $\Psi_a = \prod_{i=0}^q \beta_i^{a_i}$ be the master function on the complement of the arrangement \B\ in $\P^p$ corresponding to $a$. Then 
\begin{enumerate}
\item $\crit(\Phi_\la)-S_\Lambda\neq \emptyset$ if and only if $\crit(\Psi_a) \neq \emptyset$, 
\item $\crit(\Phi_\la)-S_\Lambda =\Phi_\Lambda^{-1}(\rho(\crit(\Psi_a)))$, and
\item $\codim (\crit(\Phi_\la)-S_\Lambda) \leq \codim \crit(\Psi_a)$.
\end{enumerate}
\end{theorem}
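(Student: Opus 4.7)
The plan is to translate all critical-set information across the linear isomorphism $\phi:\P^p\to\bar{Y}=L_\B$. First, I would verify that $\phi$ restricts to an isomorphism from the complement of $\B$ in $\P^p$ onto $Y=\bar{Y}\cap(\C^*)^q$: the component functions of $\phi$ are the $\beta_i$, so the preimage under $\phi$ of the coordinate hyperplane $\{z_i=0\}$ is the hyperplane $\{\beta_i=0\}\in\B$. The essential computation is the pullback identity
\[
\phi^*(\tau_a)\;=\;\sum_{i=0}^q a_i\,\d\log\phi^*(z_i)\;=\;\sum_{i=0}^q a_i\,\d\log(\beta_i)\;=\;\d\log\Psi_a,
\]
which identifies $V(\tau_a)\cap Y$ with $\phi(\crit(\Psi_a))$. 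Since $\tau_a$ has simple poles along each coordinate hyperplane, $V(\tau_a)$ lies automatically in $(\C^*)^q$, so in fact $V(\tau_a)=\phi(\crit(\Psi_a))$. In the notation of the statement one reads $\rho$ as the isomorphism $\phi$.

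With this dictionary, (ii) is immediate from Theorem \ref{thm:crit_fixed}: that theorem gives
\[
\crit(\Phi_\la)-S_\Lambda\;=\;\Phi_\Lambda^{-1}(V(\tau_a))-S_\Lambda\;=\;\Phi_\Lambda^{-1}(\phi(\crit(\Psi_a)))-S_\Lambda,
\]
establishing the desired equality (modulo the tacit convention of removing the exceptional locus $S_\Lambda$ on both sides). The forward direction of (i) follows at once. For the reverse, if $\crit(\Psi_a)\neq\emptyset$ then $\phi(\crit(\Psi_a))$ is a nonempty subset of $Y=\Phi_\Lambda(U)$, so its preimage under $\Phi_\Lambda$ is nonempty; to guarantee that it is not swallowed entirely by $S_\Lambda$, one uses that $\bar{Y}$ is linear and therefore smooth, forcing $S_\Lambda=\Sing(\Phi_\Lambda)$ to be a proper Zariski-closed subvariety of $U$, so a generic fiber of $\Phi_\Lambda$ over $\phi(\crit(\Psi_a))$ meets its complement.

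Finally, (iii) is a fiber-dimension bound. Because $\Phi_\Lambda:U\to Y$ is dominant with relative dimension $\ell-p$, the fiber dimension theorem implies that every irreducible component of the preimage of a closed subset of $Y$ of codimension $c$ has codimension at most $c$ in $U$. Applying this to $\phi(\crit(\Psi_a))\subseteq Y$, whose codimension in $Y$ equals $\codim_{\P^p}\crit(\Psi_a)$ since $\phi$ is an isomorphism, yields the bound in (iii). The most delicate step I foresee is the reverse implication in (i), where one must rule out the pathological possibility that every fiber of $\Phi_\Lambda$ over $\phi(\crit(\Psi_a))$ lies entirely in $\Sing(\Phi_\Lambda)$; the smoothness of $\bar{Y}$ together with the genericity of smooth points in fibers of a dominant morphism should suffice to resolve this.
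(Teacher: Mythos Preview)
Your argument is correct and is essentially the paper's own: pull back $\tau_a$ along the linear isomorphism $\phi$ to identify $V(\tau_a)$ with $\phi(\crit(\Psi_a))$, then invoke Theorem~\ref{thm:crit_fixed}. The paper's one-sentence proof cites Proposition~\ref{prop:fibers} instead, but the content is the same; your reading of $\rho$ as $\phi$, your explicit fiber-dimension argument for (iii), and your flagging of the subtlety in the reverse direction of (i) supply details the paper leaves implicit.
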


\begin{proof} 
Just as in section~\ref{subsec:trivial}, the one-form $\tau_a$ on $\bar{Y}$ pulls back to $\d \log(\Psi_a)$ under the isomorphism $\phi$,  and the assertions follow from Proposition~\ref{prop:fibers}. 
\end{proof}

\begin{example}[Example~\ref{ex:braid}, continued]
\label{ex:line}
We saw in Example~\ref{ex:braid_v2} that the variety $\bar{Y}$ is given by $z_0-z_1+z_2=0$ in $\P^2$, and $Y=\bar{Y} \cap (\C^*)^2$. We can take $\phi \colon \P^1 \xlongrightarrow{\cong} \bar{Y}$ to be given by the matrix 
\[
B=\begin{bmatrix}
1&0\\
1&1\\
0&1
\end{bmatrix}
\]
Then associated arrangement \B\ consists of three points $[1:0]$, $[1:1]$, $[0:1]$ in $\P^1$. The complement of \B\ has Euler characteristic $-1$, so a generic \B-master function $\Psi_{abc}$ has a single nondegenerate critical point. A computation 
shows that this holds if $a, b$, and $c$ are nonzero. We reach the same conclusion as before, that $\crit(\Phi_{abc})$ has codimension one.

\end{example}

\subsection{Multinets and codimension-one critical sets}\label{subsec:ceva}
Next we use the main result of \cite{FY07} to give a complete description of $\crit(\Phi_\la)$ for any $\omega=\d \log(\Phi)\in \R^1(\A)$. As we observed earlier, if $\omega \in \R^1(\A)$, then $\omega$ has a nontrivial decomposable 1-cocycle $\psi$. The statement that $\omega \we \psi = 0$ means, for each $x \in U, \{\omega(x),\psi(x)\}$ 
is linearly dependent. Then there are functions $a$ and $b$ on $U$ such that $a(x)\omega(x) + b(x)\psi(x)=0$ for all $x \in U$. This implies $V(\omega)$ contains $V(b)-V(a)$, which is a hypersurface unless it is empty. It was this observation that led to the current research.

According to \cite{LY00}, the maximal isotropic subspaces $D$ of $A^1$ of dimension at least two are the components of $\R^1(\A)$, and they intersect trivially. By \cite[Theorem 3.11]{FY07}, such a component has an integral basis $\Lambda=(\omega_{\xi_1}, \ldots, \omega_{\xi_q})$, with the property that the corresponding polynomial master functions $\Phi_{\nu_0}, \ldots, \Phi_{\nu_q}$ are all collinear in the space of degree $d$ polynomials. Then $\bar{Y}=\bar{Y}_\Lambda$ is a line in $\P^q$. The homogenized basis $\{\omega_{\nu_0}, \ldots, \omega_{\nu_q}\}$ corresponds to the characteristic vectors $\nu_i$ of the blocks in a multinet structure on a subarrangement of \A, as defined below. 

For $X\subseteq \P^\ell$, write $\A_X=\{H \in \A \mid X \subseteq H\}$. A {\em rank-two flat} of \A\ is a subspace $X$ of the form $H\cap K$ for some $H, K \in \A, H \neq K$. If ${\mathcal P}$ is a partition of \A, the {\em base locus} of ${\mathcal P}$ is the set of rank-two flats of \A\ obtained by intersecting hyperplanes from different blocks of ${\mathcal P}$.

\begin{definition} A $(q+1,d)$-multinet on \A\ is a pair $({\mathcal P},m)$ where ${\mathcal P}$ is a partition $\{\A_0,\ldots, \A_q\}$ of \A\ into $q+1$ blocks, with base locus \X,  and $m\colon \A \to \Z_{>0}$ is a multiplicity function, satisfying
\begin{enumerate}
\item $\sum_{H \in \A_i} m(H) =d$ for every $i$.
\item For each $X\in \X$, $\sum_{H \in \A_i\cap \A_X} m(H) =n_X$ for some integer $n_X$, independent of $i$.
\item For each $i$, $\bigcup \A_i - \bigcup \X$ is connected.
\end{enumerate}
\end{definition}

The third condition says that ${\mathcal P}$ cannot be refined to a $(q',d)$-multinet with the same multiplicity function, with $q'>q+1$. Given a multinet on \A, let $\nu_i=\sum_{H_i \in \A_i} m(H_i)e_i$, for $0\leq i\leq q$, and $\xi_i=\nu_i - \nu_0$ for $1\leq i\leq q$. We call $\nu_0, \ldots, \nu_q$ the {\em characteristic vectors} of $({\mathcal P},m)$. We have the following results from \cite{FY07}.

\begin{theorem}[{\rm \cite[Corollary 3.12]{FY07}}] Suppose $D$ is a maximal isotropic subspace of $A^1$ of dimension $q\geq 2$. Then there is a subarrangement $\A'$ of \A\ and a $(q+1,d)$-multinet on $\A'$ whose characteristic vectors $\nu_0, \ldots, \nu_q$ yield an integral basis $\omega_{\xi_1}, \ldots, \omega_{\xi_q}$ of $D$.
\end{theorem}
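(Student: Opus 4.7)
The plan is to take any integral basis $\Lambda = \{\omega_{\xi_1},\ldots,\omega_{\xi_q}\}$ of $D$, use the rational map $\Phi_\Lambda$ of Section 3, exploit isotropy to force $\bar Y_\Lambda \subseteq \P^q$ to be a curve, refine $\Lambda$ until $\bar Y_\Lambda$ is a line, and then read off the multinet from the factorizations of the resulting polynomial master functions. Because $D$ is isotropic the map $\bigwedge^2(D)\to A^2$ vanishes, so Proposition~\ref{prop:vanishing} and Corollary~\ref{cor:dim} force $\bar Y_\Lambda$ to be an irreducible curve in $\P^q$. Composing $\Phi_\Lambda$ with a generic linear projection $\bar Y_\Lambda \rightarrowtail \P^1$ yields a pencil $\langle P,Q\rangle$ of degree-$d$ polynomials on $\P^\ell$ whose fibers on $U$ coincide with those of $\Phi_\Lambda$.

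The central claim is that every master function $\Phi$ with $\d\log\Phi\in D$ has the form $\Phi = \prod_k (a_k P + b_k Q)^{c_k}$ for constants $a_k,b_k\in\C$ and integers $c_k$ with $\sum_k c_k = 0$, so that its building blocks are pencil members. Indeed, such a $\Phi$ is constant on the fibers of $[P:Q]$, hence $\Phi = g(P/Q)$ for some rational function $g$ of one variable; factoring $g$ into linear factors over $\C$ gives the stated form. Each pencil member $a_k P + b_k Q$ appearing with $c_k\neq 0$ must itself factor as a product of powers of linear forms from $\A$, because $\Phi$ does. Enumerate these completely reducible pencil members as $\Phi_{\nu_0},\ldots,\Phi_{\nu_r}$ and write $\Phi_{\nu_i} = \prod_{H_j \in \A_i}\alpha_j^{m(H_j)}$. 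The differences $\{\omega_{\nu_i}-\omega_{\nu_0}\}_{i=1}^r$ then belong to and span $D$, so $r=q$. The supports $\A_i$ are pairwise disjoint because distinct pencil members are coprime, giving a partition ${\mathcal P} = \{\A_0,\ldots,\A_q\}$ of $\A' := \bigsqcup_i \A_i$. Moreover, the resulting map $\Phi_\Lambda = [\Phi_{\nu_0}:\cdots:\Phi_{\nu_q}]$ now factors through $[P:Q]\colon \P^\ell\rightarrowtail\P^1$, so $\bar Y_\Lambda$ is literally a line in $\P^q$.

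Finally, verify the multinet axioms for $({\mathcal P},m)$ on $\A'$. Axiom (i) is just $\deg\Phi_{\nu_i}=d$. For axiom (ii), given $X$ in the base locus $\X$, choose a generic line $\ell\subset \P^\ell$ meeting $X$ transversely at one point; the restriction of $\langle P,Q\rangle$ to $\ell$ is a pencil of degree-$d$ divisors on $\ell\cong\P^1$ whose base scheme contains $X\cap\ell$ with a fixed local multiplicity $n_X$ independent of the chosen pencil member. For the completely reducible fiber $\Phi_{\nu_i}$ this local multiplicity equals $\sum_{H\in \A_i \cap \A_X}m(H)$, giving axiom (ii). Axiom (iii) follows from maximality of $D$: a disconnection of $\bigcup\A_i - \bigcup \X$ would let us split $\Phi_{\nu_i}$ into two coprime pencil members, producing at least $q+2$ completely reducible fibers and hence an isotropic subspace strictly containing $D$.

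I expect the main obstacle to be the central claim of the second paragraph: showing that every master function in $D$ factors through the pencil $\langle P,Q\rangle$. This is the step that allows $\bar Y_\Lambda$ to be straightened to a line and identifies the completely reducible members of the pencil with a basis for $D$. A related subtlety is the multiplicity statement underlying axiom (ii), where one must check that the local intersection multiplicities at rank-two flats are the same for every pencil member, not merely for a generic one.
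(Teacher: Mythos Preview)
The paper does not prove this theorem; it is quoted from \cite{FY07} (as the attribution in the heading indicates) and no proof is given here, so there is no ``paper's own proof'' to compare against.

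That said, your sketch is close in spirit to the argument in \cite{FY07}, and the gap you anticipate is real---but it occurs one step earlier than you locate it. You compose $\Phi_\Lambda$ with a \emph{generic linear projection} $\bar Y_\Lambda \rightarrowtail \P^1$ and assert that the resulting pencil $[P:Q]$ has the same fibers on $U$ as $\Phi_\Lambda$. This is false unless that projection is birational: a generic linear projection of a degree-$e$ curve in $\P^q$ to $\P^1$ has degree $e$, so a fiber of $[P:Q]$ is a union of $e$ fibers of $\Phi_\Lambda$. Consequently a master function $\Phi$ constant on the fibers of $\Phi_\Lambda$ need not be constant on the fibers of $[P:Q]$, and the identity $\Phi=g(P/Q)$ is unjustified. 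What repairs this is L\"uroth's theorem: $\bar Y_\Lambda$ is a curve dominated by $\P^\ell$ (restrict $\Phi_\Lambda$ to a generic line to get a dominant map from $\P^1$), hence unirational, hence rational. Replacing your generic linear projection by a birational map $\bar Y_\Lambda \dashrightarrow \P^1$---which need not be linear---your factorization argument then goes through; once the base locus $\gcd(P,Q)$ is removed, distinct pencil members are coprime, so unique factorization forces each to be a product of the $\alpha_j$. This is essentially how \cite{FY07} produces the pencil. Your derivation of the multinet axioms is correct in outline; in particular the argument for axiom~(iii) from maximality of $D$ is the one used there.
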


\begin{theorem}[{\rm \cite[Theorem 3.11]{FY07}}] Suppose $\nu_0, \ldots, \nu_q$ are the characteristic vectors of a $(q+1,d)$-multinet structure on \A. Then each of the master functions $\Phi_{\nu_i}$, $i\geq 2$, is a linear combination of $\Phi_{\nu_0}$ and $\Phi_{\nu_1}$. Moreover every fiber of the mapping $[\Phi_{\nu_0}:\Phi_{\nu_1}] \colon \P^\ell \rightarrowtail \P^1$ is connected. 
\label{thm:connected}
\end{theorem}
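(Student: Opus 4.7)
The plan handles linear dependence via Bezout in a generic plane section, and fiber connectivity via Stein factorization with Riemann--Hurwitz. The combinatorial cornerstone for Part 1 is the identity $\sum_{X \in \mathcal{X}} n_X^2 = d^2$, obtained from axioms (i) and (ii) by double-counting pairs $(H, H')$ from distinct blocks $\mathcal{A}_i, \mathcal{A}_j$:
\[
d^2 = \Bigl(\sum_{H \in \mathcal{A}_i} m(H)\Bigr)\Bigl(\sum_{H' \in \mathcal{A}_j} m(H')\Bigr) = \sum_X \Bigl(\sum_{H \in \mathcal{A}_i \cap \mathcal{A}_X} m(H)\Bigr)\Bigl(\sum_{H' \in \mathcal{A}_j \cap \mathcal{A}_X} m(H')\Bigr) = \sum_X n_X^2,
\]
grouping the pairs by the flat $X = H \cap H' \in \mathcal{X}$.

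Next, I restrict to a generic $2$-plane $\Pi \subset \P^\ell$ and view each $\Phi_{\nu_i}|_\Pi$ as a plane curve of degree $d$ with multiplicity $n_X$ at each base point on $\Pi$. The identity says that the pencil $\langle \Phi_{\nu_0}|_\Pi, \Phi_{\nu_1}|_\Pi \rangle$ has base locus exactly $\mathcal{X} \cap \Pi$ with the prescribed multiplicities; any additional degree-$d$ polynomial sharing those multiplicities, such as $\Phi_{\nu_i}|_\Pi$, must lie in the pencil. To see this, suppose $\Phi_{\nu_i}|_\Pi \notin \langle \Phi_{\nu_0}|_\Pi, \Phi_{\nu_1}|_\Pi \rangle$ and form $F = s\Phi_{\nu_0} + t\Phi_{\nu_1} - \Phi_{\nu_i}$ with $(s,t)$ chosen so that $F|_\Pi$ vanishes at a point of $\{\Phi_{\nu_0}|_\Pi = 0\}$ outside the base locus; then the intersection of $\{F|_\Pi = 0\}$ with $\{\Phi_{\nu_0}|_\Pi = 0\}$ has at least $d^2 + 1$ points counted with multiplicity, exceeding the Bezout bound. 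Matching coefficients on a generic common line of two such planes globalizes the local pencil identity to $\Phi_{\nu_i} = s\Phi_{\nu_0} + t\Phi_{\nu_1}$ on all of $\P^\ell$.

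For Part 2, resolve the rational map $[\Phi_{\nu_0}:\Phi_{\nu_1}] \colon \P^\ell \rightarrowtail \P^1$ to a morphism and apply Stein factorization, yielding $\P^\ell \to C \to \P^1$ with connected fibers in the first map and a degree-$k$ finite cover in the second. L\"uroth gives $C \cong \P^1$. Part 1 shows each $\{\Phi_{\nu_i} = 0\} = \bigcup \mathcal{A}_i$ is a fiber over a point $[\nu_i] \in \P^1$, and axiom (iii) forces each to be connected (as the closure of the connected set $\bigcup \mathcal{A}_i \setminus \bigcup \mathcal{X}$), making each of the $q + 1 \geq 3$ distinguished points $[\nu_i]$ totally ramified in $C \to \P^1$. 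Riemann--Hurwitz then yields $-2 \geq -2k + (q+1)(k-1)$, i.e., $(q-1)(k-1) \leq 0$, forcing $k = 1$ since $q \geq 2$; hence every fiber of the original map is connected.

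The main obstacle is the Bezout step: $\Phi_{\nu_0}|_\Pi$ is reducible, so $F|_\Pi$ might share a single hyperplane factor with it instead of vanishing identically, and one must either refine the choice of $(s, t)$ or use axiom (iii) to propagate matching proportionality constants along components of $\Phi_{\nu_0}$ via pairs of hyperplanes meeting outside the base locus. Once the linear dependence is established, the Stein/Riemann--Hurwitz portion of Part 2 is largely formal.
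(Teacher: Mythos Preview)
The paper does not prove this statement; Theorem~\ref{thm:connected} is quoted without proof from \cite{FY07}, so there is no proof here to compare against. I will assess your outline against the actual Falk--Yuzvinsky argument.

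Your plan is broadly the right one and close to what Falk--Yuzvinsky do. The identity $\sum_{X}n_X^2=d^2$ is indeed the combinatorial heart of Part~1, and the propagation argument you sketch at the end is in fact the correct resolution of the Bezout obstacle, not an afterthought: on each line $\ell\subset\{\Phi_{\nu_0}=0\}$ the restrictions $\Phi_{\nu_1}|_\ell$ and $\Phi_{\nu_i}|_\ell$ are degree-$d$ forms with the same divisor $\sum_{X\in\ell}n_X\cdot X$ (since $\sum_{X\in\ell}n_X=d$), hence proportional, and axiom~(iii) forces the proportionality constant to be the same on every component. This gives $\Phi_{\nu_i}-\lambda\Phi_{\nu_1}$ vanishing on the \emph{reduced} locus $\{\Phi_{\nu_0}=0\}_{\mathrm{red}}$, and hence $\Phi_{\nu_i}=\lambda\Phi_{\nu_1}+\mu\Phi_{\nu_0}$ when all $m(H)=1$ for $H\in\A_0$. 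There is, however, a residual gap when the multinet carries multiplicities $m(H)>1$: you have only established divisibility by the squarefree part of $\Phi_{\nu_0}$, not by $\Phi_{\nu_0}$ itself, and neither your Bezout maneuver nor the line-by-line propagation obviously upgrades the vanishing order along a multiple component. Falk--Yuzvinsky handle this with additional care (effectively by working on the surface where the base locus has been resolved so that the pencil becomes a morphism); your outline should at least flag this step rather than leave it implicit.

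Your Part~2 via Stein factorization, L\"uroth, and Riemann--Hurwitz is correct and is essentially the standard argument. One small point to tighten: the fibers whose connectedness you invoke are those on the \emph{resolution} $\tilde X$, not on $\P^\ell$. Axiom~(iii) guarantees that the strict transforms of the $H\in\A_i$ remain connected (they meet away from the blown-up locus $\X$), and any exceptional components landing in the same fiber attach to these strict transforms over the corresponding base points, so the full fiber on $\tilde X$ is still connected and the total-ramification claim goes through.
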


Given this result, the analysis of critical sets proceeds exactly as in the Example~\ref{ex:line}. First, we need a lemma about isolated critical points.

\begin{lemma}\label{lem:isolated}
Suppose $\A$ is an affine arrangement of $n$ hyperplanes in $\C^\ell$, and $W$ is a nonempty Zariski-open
subset of $\C^\ell$.   Then there is a 
nonempty Zariski-open subset $L$ of $\C^n$
such that $W\cap \crit(\Phi_\lambda)$ consists of $\abs{\chi(U)}$ points,
for each $\lambda\in L$.
\end{lemma}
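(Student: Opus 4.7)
The plan is to combine the classical count of critical points for generic $\la$ with a dimension argument guaranteeing that, for generic $\la$, none of those critical points escapes $W$.

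By the classical results of Varchenko, Orlik--Terao, and Silvotti cited in the introduction \cite{Var95,OT95,Silv96}, there is a nonempty Zariski-open subset $L_0\subseteq\C^n$ such that for every $\la\in L_0$ the critical locus $\crit(\Phi_\la)$ consists of $\abs{\chi(U)}$ isolated, nondegenerate points.  It therefore suffices to find a nonempty Zariski-open $L_1\subseteq\C^n$ such that $\crit(\Phi_\la)\subseteq W$ for every $\la\in L_1$; then $L=L_0\cap L_1$ will have the required property.

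For $L_1$, I would use the universal critical set
\[
\Sigma=\{(x,\la)\in U\times\C^n\mid \omega_\la(x)=0\}
\]
of \cite{CDFV08}, with projections $\pi_1\colon\Sigma\to U$ and $\pi_2\colon\Sigma\to\C^n$.  The fiber $\pi_1^{-1}(x)$ is the linear subspace of $\C^n$ cut out by $\sum_{i=1}^n\la_i\,\d\log f_i(x)=0$, of dimension $n-d(x)$, where $d(x)$ is the rank of the matrix with rows $\d\log f_i(x)$.  Assuming $\A$ is essential, the covectors $\d f_i$ span $(\C^\ell)^*$, so $d(x)=\ell$ for all $x\in U$, whence $\dim\Sigma=n$ (in agreement with the fact that $\pi_2$ has finite fibers over $L_0$).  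Set $Z=U-W$, a proper closed subvariety of $U$ of dimension at most $\ell-1$.  Then $\Sigma\cap(Z\times\C^n)$ is closed in $\Sigma$ of dimension at most $(\ell-1)+(n-\ell)=n-1$, so by Chevalley's theorem its image $\pi_2(\Sigma\cap(Z\times\C^n))$ is constructible in $\C^n$ of dimension at most $n-1$.  Taking $L_1$ to be the complement of the Zariski closure of this image gives a nonempty Zariski-open set with the property that $\crit(\Phi_\la)\cap Z=\emptyset$ for every $\la\in L_1$, as required.

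The only step deserving care is the dimension bound on $\Sigma\cap(Z\times\C^n)$, which uses $d(x)=\ell$ everywhere on $U$; this is equivalent to essentialness of $\A$.  In the non-essential case, either $\abs{\chi(U)}=0$ (and the conclusion holds vacuously) or one first descends to the essential quotient along the translation-invariant directions, after which the same dimension count applies.  Everything else is a formal combination of Chevalley's theorem and the classical generic-count theorem.
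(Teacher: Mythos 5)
Your argument is correct for essential \A, and it uses the same two ingredients as the paper's proof --- the generic count of \cite[Theorem~1.1]{OT95} and the universal critical set $\Sigma$ combined with Chevalley's theorem on constructible images --- but it runs the constructibility argument in the complementary direction. The paper takes the open subset of $\Sigma$ lying over $W$ and over the good parameter set $L'$, notes it is dense in the $n$-dimensional smooth irreducible variety $\Sigma$ (dimension and smoothness quoted from \cite[Prop.~4.1]{OT95}), and extracts a Zariski-open $L$ inside its dense constructible image in $\C^n$. You instead bound the dimension of the part of $\Sigma$ lying over the closed set $U-W$: since the fibers of $\Sigma\to U$ are linear of constant dimension $n-\ell$ (this is exactly where essentialness enters), that locus has dimension at most $n-1$, so its image in $\C^n$ lies in a proper closed subset, and its complement $L_1$ forces $\crit(\Phi_\la)\subseteq W$. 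This buys something concrete: for $\la\in L_0\cap L_1$ you conclude directly that \emph{all} $\abs{\chi(U)}$ critical points lie in $W$, whereas the paper's density argument, read literally, exhibits only at least one critical point of $\Phi_\la$ in $W$ for $\la\in L$; so your route is, if anything, the tighter one. What the paper's formulation buys is that it avoids the explicit constant-fiber-dimension computation by citing the structure of $\Sigma$ from \cite{OT95}.

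One caveat: your disposal of the non-essential case is not right as stated. A non-essential arrangement can have $\chi(U)\neq 0$ (two parallel lines in $\C^2$ give $\abs{\chi(U)}=1$), and there $\crit(\Phi_\la)$ is a positive-dimensional translate along the lineality directions, so the conclusion of the lemma genuinely fails rather than following from descent to the essentialization. Essentialness is an implicit hypothesis here --- it is likewise needed for the citation of \cite[Theorem~1.1]{OT95} in the paper's own proof, and it holds in the application (the arrangement \B\ of points in $\P^1$) --- so this does not damage your main argument, but the remark ``either $\abs{\chi(U)}=0$ or descend to the essential quotient'' should be replaced by simply assuming \A\ essential.
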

\begin{proof}
By \cite[Theorem~1.1]{OT95}, there is a nonempty Zariski-open subset $L'$ of $\C^n$ for which $\crit(\Phi_\lambda)$ is isolated and consists of $\abs{\chi(U)}$ points,
for $\lambda\in L'$.  Let 
\[
\Sigma=\set{(\lambda,v)
\in\C^n\times U\colon \omega_\lambda(v)=0},
\]
an $n$-dimensional smooth complex variety by \cite[Prop.~4.1]{OT95}.  
Let $\pi_i$ for $i=1,2$ denote its projections onto $\C^n$ and $U$, 
respectively.  Then $\pi_2^{-1}(U\cap W)$ and $\pi_1^{-1}(L')$ are each nonempty Zariski-open subsets of $\Sigma$, as is their intersection $Z$. Then $\pi_1(Z)$ is a finite union of locally-closed subsets of $\C^n$ -- see \cite[Exercise 3.19]{Har77}. Since $Z$ is dense in $\Sigma$, $\pi_1(Z)$ is dense in $\C^n$. Hence $\pi_1(Z)$ contains a Zariski-open subset $L$, which has the required property.
\end{proof}


\begin{theorem} Suppose $\omega=\d \log(\Phi) \in \R^1(\A)$, and $D$ is the maximal isotropic subspace of $A^1$ containing $\omega$. Let $\Lambda$ be the integral basis of $D$ arising from the associated multinet. Then 
\begin{enumerate}
\item For every $\omega \in D$, $\Sing(\Phi_\Lambda) \subseteq \crit(\Phi)$, and, 
\item For generic $\omega\in D$, $\crit(\Phi)-\Sing(\Phi_\Lambda)$ is a union of $\dim(D)-1$ connected smooth hypersurfaces of the same degree.
\end{enumerate}
\end{theorem}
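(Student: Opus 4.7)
The plan is to reduce to a classical statement about master functions on a configuration of $q+1$ points in $\P^1$, using Theorem~\ref{thm:connected} (the FY theorem) to identify $\bar{Y}_\Lambda$ as a line.

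Since $D$ is maximal isotropic of dimension $q=\dim(D)\geq 2$ and contains $\omega\in\R^1(\A)$, Theorem~\ref{thm:connected} tells us that $\bar{Y}_\Lambda$ is a line in $\P^q$: all master functions $\Phi_{\nu_i}$ are linear combinations of two of them. In particular $\bar Y$ is smooth, its rank as a singular subspace equals $\dim\bar Y=1$ by Corollary~\ref{cor:dim}, and hence $S_\Lambda=\Sing(\Phi_\Lambda)$. Part (i) now follows directly from Corollary~\ref{cor:rankonesing}(i). For (ii) I would fix a linear isomorphism $\phi\colon \P^1\xrightarrow{\cong}\bar Y$, so the associated arrangement $\B$ is an essential configuration of $q+1$ points in $\P^1$, and the theorem on linear $\bar Y$ from Section~4 identifies $\crit(\Phi)-\Sing(\Phi_\Lambda)$ with $\Phi_\Lambda^{-1}(\phi(\crit(\Psi_a)))$, where $\Psi_a$ is the corresponding master function on $\P^1-\B$.

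The complement $\P^1-\B$ has Euler characteristic $1-q$. By the classical Varchenko--Orlik--Terao--Silvotti theorem, strengthened via Lemma~\ref{lem:isolated} applied to an affinization of $\B$ with $W$ taken to avoid the finite set $\phi^{-1}(\Phi_\Lambda(\Sing\Phi_\Lambda)\cap Y)$, for generic $a$ the function $\Psi_a$ has exactly $q-1$ distinct nondegenerate critical points $y_1,\ldots,y_{q-1}$, all lying outside the image of $\Sing(\Phi_\Lambda)$. Consequently each fiber $F_i:=\Phi_\Lambda^{-1}(\phi(y_i))$ is contained in $U-\Sing(\Phi_\Lambda)$ and is cut out in $\P^\ell$ by a single degree-$d$ equation of the form $c_0\Phi_{\nu_0}+c_1\Phi_{\nu_1}=0$; so the $F_i$ are hypersurfaces of a common degree $d$, and are smooth by the implicit function theorem (since $\Phi_\Lambda$ is a submersion along each $F_i$).

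The main obstacle is connectedness of each $F_i$: Theorem~\ref{thm:connected} gives connectedness of the projective fiber in $\P^\ell$, but a connected projective variety can decompose once the hyperplanes of $\A$ are deleted, unless the fiber is irreducible. I would resolve this by arguing that for generic $a$ the $F_i$ are in fact irreducible. The reducible (``completely decomposable'') fibers of the pencil $[\Phi_{\nu_0}:\Phi_{\nu_1}]$ correspond exactly to the blocks of the multinet, hence to the $q+1$ points of $\B\subset\P^1$, which the critical points of a generic $\Psi_a$ avoid by construction. An irreducible projective variety remains connected after the removal of any proper closed subset, so $F_i\cap U$ is connected, giving exactly $\dim(D)-1$ connected components of $\crit(\Phi)-\Sing(\Phi_\Lambda)$, as required.
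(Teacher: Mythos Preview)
Your approach mirrors the paper's almost exactly: reduce via Theorem~\ref{thm:connected} to a line $\bar Y\subset\P^q$, invoke Corollary~\ref{cor:rankonesing} for part~(i), identify the $\B$-arrangement of $q+1$ points on $\P^1$, and use the Euler characteristic $1-q$ together with Lemma~\ref{lem:isolated} to obtain $q-1$ fibers. Your smoothness argument via the submersion property of $\Phi_\Lambda$ away from $\Sing(\Phi_\Lambda)$ is a legitimate alternative to the paper's appeal to Bertini.

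There is, however, a gap in your connectedness step. You assert that the reducible fibers of the pencil $[\Phi_{\nu_0}:\Phi_{\nu_1}]$ are exactly the $q+1$ multinet blocks. That characterizes the \emph{completely} reducible fibers (products of linear forms), but it does not exclude other reducible fibers---for instance, in a pencil of plane cubics, a fiber of the form (line)$\cdot$(irreducible conic), whose linear factor need not lie in $\A$. Such a fiber is connected in $\P^\ell$ yet can disconnect in $U$, since distinct components meet only along the base locus $\X\subset\bigcup H_i$. The repair is immediate and uses ingredients you already have: the generic projective fiber is smooth (Bertini, as in the paper) and connected (Theorem~\ref{thm:connected}), hence irreducible; thus the non-irreducible fibers form a finite subset of $\P^1$. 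Enlarge the set that $W$ avoids in Lemma~\ref{lem:isolated} to include these finitely many points, together with $\phi^{-1}\bigl((\bar Y\cap(\C^*)^q)\setminus Y\bigr)$---the latter is also needed (and is what the paper's own invocation of Lemma~\ref{lem:isolated} handles) to ensure each $\phi(y_i)\in Y$, so that $\Phi_\Lambda^{-1}(\phi(y_i))$ is actually nonempty. With that adjustment your argument is complete and essentially coincides with the paper's.
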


\begin{proof}
Write $q=\dim(D)$ and let $\nu_0, \ldots, \nu_q$ be the characteristic vectors of the $(q+1,d)$-multinet corresponding to $D$.
Write $\omega=\sum_{i=0}^q a_i \omega_{\nu_i}$. By the preceding theorem, for each $2 \leq k\leq q$, there is a linear relation $\Phi_{\nu_k} = b_k \Phi_{\nu_0} + c_k\Phi_{\nu_1}$. Then $\bar{Y}$ is a line in $\P^q$. The first assertion follows from Corollary~\ref{cor:rankonesing}. 

We can choose the isomorphism $\phi \colon \P^1 \xlongrightarrow{\cong} \bar{Y} \subset \P^q$ to be given by the matrix 
\[
B=\begin{bmatrix}
1&0\\
0&1\\
b_2&c_2\\
\vdots&\vdots\\
b_q&c_q
\end{bmatrix}.
\]
The corresponding arrangement \B\ consists of $q+1$ distinct points in $\P^1$. The Euler characteristic of the complement of \B\ is $1-q$. Then for generic $a$ the \B-master function $\Psi_a$ has $q-1$ isolated, nondegenerate critical points in $\bar{Y}\cap(\C^*)^q$. In fact, since $Y$ is dense in $\bar{Y}\cap(\C^*)^q$, we apply Lemma~\ref{lem:isolated} to see that, for generic $a$, $\Psi_a$ has $q-1$ critical points in $Y$. Then Corollary~\ref{cor:fibers} implies $\crit(\Phi)$ is the union of $q-1$ fibers of $\Phi_\Lambda$.

The projection $\P^q \rightarrowtail \P^1$ along $z_0=z_1=0$ restricts to an isomorphism on $\bar{Y}$. Then the last statement of Theorem~\ref{thm:connected} implies the fibers of $\Phi_\Lambda$ are connected. These fibers are given by $[\Phi_{\nu_0}:\Phi_{\nu_1}]=[a_0:a_1]$. Since the $\Phi_{\nu_i}$ have degree $d$, $\crit(\Phi_\Lambda)$ is a union of $q-1$ connected hypersurfaces of degree $d$. The generic fiber of $\Phi_\Lambda$ is smooth by Bertini's Theorem \cite[Corollary III.10.9]{Har77}.
\end{proof}

\begin{corollary} For generic $\omega=\d \log(\Phi) \in \R^1(\A)$, the number of connected components of $\crit(\Phi_\la)$ is equal to the dimension of $H^1(\AA,\omega)$.
\end{corollary}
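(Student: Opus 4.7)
My plan is to combine the theorem just proved with a standard computation of $\dim H^1(\AA,\omega)$ for $\omega$ generic in a component of $\R^1(\A)$.

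By Section~4 (based on \cite{LY00}), the component $D$ of $\R^1(\A)$ containing $\omega$ is a maximal isotropic subspace of $A^1$, of some dimension $q\ge 2$, and distinct components of $\R^1(\A)$ intersect trivially. First I would verify that $Z^1(\omega)=D$ for generic $\omega\in D$: the inclusion $D\subseteq Z^1(\omega)$ holds because $D$ is isotropic, while if $\psi\in Z^1(\omega)\setminus\C\omega$, then $\langle\omega,\psi\rangle$ is a two-dimensional isotropic subspace of $A^1$, hence contained in some maximal isotropic subspace of $A^1$ -- a component of $\R^1(\A)$. Trivial intersection together with $0\neq\omega\in D$ force this component to be $D$ itself, so $\psi\in D$. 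Combined with $B^1(\omega)=\omega\wedge A^0=\C\omega$, this yields $\dim H^1(\AA,\omega)=q-1$.

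Next I would invoke the preceding theorem, which supplies $q-1$ connected smooth hypersurface components of $\crit(\Phi)-\Sing(\Phi_\Lambda)$. The remaining task is to check that $\Sing(\Phi_\Lambda)\cap U$ (which by part~(i) of that theorem is always contained in $\crit(\Phi)$) contributes no additional connected components. Since $\bar Y_\Lambda$ is a line, $\Phi_\Lambda$ is essentially the pencil $[\Phi_{\nu_0}:\Phi_{\nu_1}]$, whose indeterminacy locus is the multinet base locus, contained in $\bigcup_{H\in\A}H$ and therefore disjoint from $U$. Any other singular point of $\Phi_\Lambda$ in $U$ lies on a special fiber which, by the connectedness statement of Theorem~\ref{thm:connected}, is contained in the closure within $U$ of one of the $q-1$ generic critical fibers; such points therefore do not create new connected components of $\crit(\Phi)$.

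\textbf{Main obstacle.} The delicate step is the second one: verifying that $\Sing(\Phi_\Lambda)\cap U$ is absorbed by the generic critical fibers rather than forming extra components. This requires the multinet base-locus analysis together with the connectedness statement of Theorem~\ref{thm:connected}. The first step -- that $Z^1(\omega)=D$ for $\omega$ generic in a maximal isotropic -- is standard given \cite{LY00}.
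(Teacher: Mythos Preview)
Your first two steps match the paper's (implicit) argument: the corollary is stated without proof immediately after Theorem~4.8, and the identification $H^1(\AA,\omega)\cong D/\C\omega$ of dimension $q-1$ for $\omega$ in a maximal isotropic $D$ is already in hand from Section~4.4 and \cite{LY00}. So the intended deduction is simply: Theorem~4.8(ii) produces $q-1$ connected hypersurface components, and $q-1=\dim H^1$.

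Your handling of $\Sing(\Phi_\Lambda)$, however, contains a genuine error. You assert that a singular point of $\Phi_\Lambda$ in $U$, lying on a special fiber, is ``contained in the closure within $U$ of one of the $q-1$ generic critical fibers,'' invoking the connectedness statement of Theorem~\ref{thm:connected}. That theorem says only that each individual fiber of $[\Phi_{\nu_0}:\Phi_{\nu_1}]$ is connected; it says nothing about distinct fibers sharing limit points. In fact two distinct fibers of this pencil meet only along the base locus $\X$ of the multinet, which lies in $\bigcup_i H_i$ and hence outside $U$. Thus for generic $a$ the points of $\Sing(\Phi_\Lambda)\cap U$---which sit on a \emph{fixed} fiber, independent of $a$---are disjoint from the $q-1$ generic critical fibers and form additional connected components of $\crit(\Phi)$. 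Example~\ref{ex:referee} illustrates this concretely: there $q=2$, so $\dim H^1(\AA,\omega)=1$, yet $\Sing(\Phi_\Lambda)\cap U$ consists of two points contained in every $\crit(\Phi)$; for generic $a$ these two points are isolated from the single critical fiber, giving three connected components rather than one. You were right to flag this as the main obstacle; the paper does not address it, and the corollary as literally stated appears to need either the extra hypothesis $\Sing(\Phi_\Lambda)\cap U=\emptyset$ or to be read as a statement about $\crit(\Phi)-\Sing(\Phi_\Lambda)$.
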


Example~\ref{ex:referee} shows that $\crit(\Phi)$ need not be smooth or irreducible for all $\omega=\d \log(\Phi) \in \R^1(\A)$. 

\begin{example} 
\label{ex:hessian2}
By \cite{PerYuz07,Yuz08}, the maximum number of blocks in a multinet is equal to four. The only known example with four blocks is the multinet on the Hessian arrangement corresponding to the Hesse pencil, Example~\ref{ex:hessian}. The factors of the polynomial master functions $P_0=xyz, P_1, P_2, P_3$ define the blocks of a multinet on \A\ with all multiplicities equal to one. These master functions satisfy two linear syzygies:
\begin{align*}
P_2&=3(1-\zeta)P_0+P_1\\
P_3&=3(1-\zeta^2)P_0+P_1.
\end{align*}

Then the variety $\bar{Y}_\Lambda$ corresponding to the basis 
\[
\Lambda=\{\omega_{100}, \omega_{010}, \omega_{001}\}
\] 
is the line given by $z_2=3(1-\zeta)z_0+z_1,\ z_3=3(1-\zeta^2)z_0+z_1$ in $\P^3$, which meets the coordinate hyperplanes in the four points corresponding to the singular fibers. The corresponding arrangement \B\ consists of four points in general position in $\P^1$, given by the rows of the matrix
\[
\begin{bmatrix}
1&0\\
0&1\\
3(1-\zeta)&1\\
3(1-\zeta^2)&1
\end{bmatrix}.
\]

The complement of \B\ has Euler characteristic $-2$, hence a generic \B-master function has two isolated critical points. Then, for generic $a=(a_1,a_2,a_3)$, the critical locus of the \A-master function $\Phi=\Phi_{a_1a_2a_3}$ has two components and codimension one, as found by direct calculation in Example~\ref{ex:hessian}. This example shows that Theorem 4.8(ii) and Corollary 4.9 may not hold under the weaker hypothesis that $a_i\neq 0$ for all $i$.

\end{example}

Here is a rank-four example, that has appeared in different form in the lectures of A.~Libgober in this volume.

\begin{example}
\label{ex:cube}
Let \A\ be the arrangement with defining polynomial 
\[
Q=(x+y)(x-y)(y+z)(y-z)(z+w)(z-w)(w+x)(w-x),
\] 
with the hyperplanes numbered according to the order of factors in $Q$. Then \A\ is a 2-generic subarrangement of the Coxeter arrangement of type $D_4$. Up to lattice-isotopy, the dual projective point configuration consists of the eight vertices of a cube - see Figure~\ref{fig:cube}.
\begin{figure} 
\includegraphics[width=5cm]{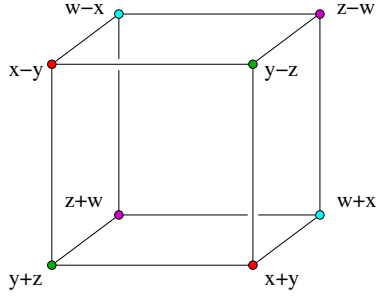}
\caption{A rank-four matroid with a linear syzygy of master functions.}
\label{fig:cube}
\end{figure}
Let $D$ be the subspace of $A^1$ with basis
\[
\Lambda=\{(\omega_0+\omega_1)-(\omega_6+\omega_7), \ (\omega_2+\omega_3)-(\omega_6+\omega_7), (\omega_4+\omega_5)-(\omega_6+\omega_7)\}.
\]
Then $D$ is a rational singular subspace of rank two; if $\omega \in D$, then $H^1(\AA,\omega)=0$ and $H^2(\AA,\omega)\cong D/\C\omega$ has dimension 2. $\bar{Y}_\Lambda$ is the linear hyperplane in $\P^3$ defined by $z_0+z_1+z_2+z_3=0$, reflecting the linear syzygy of polynomial master functions
\[
(x+y)(x-y)+(y+z)(y-z)+(z+w)(z-w)+(w+x)(w-x)=0.
\]
From Proposition~\ref{prop:linhyp} we see that 
\[
\Phi=\left(\frac{x^2-y^2}{w^2-x^2}\right)^{a_1}\left(\frac{y^2-z^2}{w^2-x^2}\right)^{a_2}\left(\frac{z^2-w^2}{w^2-x^2}\right)^{a_3}
\]
has nonempty critical set of codimension two in $\P^3$, for generic $(a_1,a_2,a_3)$.

\end{example}

%
\end{section}

\begin{section}{The rank condition}
\label{sec:trop}
We are left with the problem of finding rational singular subspaces of $A^1$. The theory of multinets gives a method to find such subspaces of rank one. In this section we give a combinatorial condition for a set $\Lambda$ of linearly independent integral one-forms to span a singular subspace of $A^1$ of arbitrary rank, using tropical implicitization and nested sets. 

\subsection{Tropicalization} The {\em tropicalization} of a projective variety $V$ in $\P^q$ is a polyhedral fan $\trop(V)$ in tropical projective space $\TP^q=\bbR^{q+1}/\bbR(1,\dots,1)$, associated to a homogeneous defining ideal $I$ of $V$. If $V$ is a hypersurface with defining equation $f=0$, then $\trop(V)$ is the image  in $\TP^q$ of the union of  
the cones of codimension at least one in the normal fan of the Newton polytope of $f$. In general, $\trop(V)$ is the image of the union of the  
cones of codimension at least one in the Gr\"obner fan of $I$. 
The set $\trop(V)$ arises geometrically from the lowest-degree terms in Puiseux expansions of curves lying in $V$.
See \cite{DFS07} and the references therein for background on tropical varieties. See \cite{Ox92} for matroid terminology.

We will need several results from tropical geometry. The first is a theorem of Bieri and Groves \cite{BG84}.

\begin{theorem}  The maximal cones in $\trop(V)$ have dimension equal to $\dim(V)$. 
\label{thm:dim}
\end{theorem}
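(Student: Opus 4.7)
This is the Bieri--Groves theorem. The plan is to establish both inequalities $\dim C \leq \dim V$ and $\dim C \geq \dim V$ for any maximal cone $C$ of $\trop(V)$, exploiting the Gr\"obner-theoretic characterization of $\trop(V)$.

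For the upper bound I would fix a homogeneous defining ideal $I$ of $V$ and choose $w$ in the relative interior of $C$. Then the initial ideal $J := \operatorname{in}_w(I)$ is constant on $\text{relint}(C)$ and, because $C \subseteq \trop(V)$, contains no monomial. Letting $L$ denote the linear span of $C$ in $\bbR^{q+1}$, the ideal $J$ is homogeneous with respect to every cocharacter in $L$, so $V(J) \subseteq \C^{q+1}$ is invariant under the corresponding torus $T_L$ of rank $\dim L$. Monomial-freeness of $J$ ensures via the Nullstellensatz that $V(J) \cap (\C^*)^{q+1}$ is nonempty, and this set, being $T_L$-invariant, contains an orbit of dimension $\dim L$. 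Combined with flatness of the Gr\"obner degeneration, which gives $\dim V(J) = \dim V(I) = \dim V + 1$, we conclude $\dim L \leq \dim V + 1$. Since the line $\bbR(1,\ldots,1)$ lies in the lineality space of every Gr\"obner cone of a homogeneous ideal and collapses under passage to $\TP^q$, this yields $\dim C \leq \dim V$.

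For the lower bound I would invoke the Fundamental Theorem of Tropical Geometry: after base change to the Puiseux series field $K = \C\{\!\{t\}\!\}$ with its natural valuation $\nu$, every $w \in \trop(V)$ arises as the coordinatewise valuation of some $x \in V(K) \cap (K^*)^{q+1}$. Choosing $w$ in the relative interior of a maximal $C$ and lifting it to a smooth $K$-point $x$ of $V$, a $\dim V$-parameter analytic neighborhood of $x$ in $V(K) \cap (K^*)^{q+1}$ maps under $\nu$ to a subset of $\trop(V)$ that, by generic independence of the leading Puiseux exponents, spans a $\dim V$-dimensional real subspace through $w$. By maximality this subspace must lie in $L(C)$ modulo $\bbR(1,\ldots,1)$, so $\dim C \geq \dim V$.

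The main obstacle is the Fundamental Theorem of Tropical Geometry used in the lower bound; it is nontrivial but foundational (see \cite{DFS07} and references therein). An alternative route that avoids Puiseux series is to establish purity directly via the canonical Sturmfels--Tevelev multiplicities, which equip each top-dimensional cone of $\trop(V)$ with a positive integer weight making $\trop(V)$ a balanced polyhedral fan; the balancing condition then rules out the existence of ``dangling'' maximal cones of dimension strictly less than $\dim V$.
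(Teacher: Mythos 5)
The paper itself offers no proof of this statement: it is quoted directly as the Bieri--Groves theorem from \cite{BG84}, so there is no internal argument to compare yours against; what follows is a review of your sketch on its own terms. Your upper bound is the standard and essentially correct argument: for $w$ in the relative interior of a cone $C$ with linear span $L$, the initial ideal $\operatorname{in}_w(I)$ is $L$-homogeneous and monomial-free, so its variety meets the torus in a $T_L$-invariant set, and flatness of the Gr\"obner degeneration gives $\dim L \leq \dim V+1$, hence $\dim C\leq \dim V$ after passing to $\TP^q$.

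The lower bound, which is the real content (purity), has a genuine gap. Lifting $w$ to a single point $x\in V(K)\cap (K^*)^{q+1}$ and then looking at ``a $\dim V$-parameter analytic neighborhood of $x$'' does not produce a $\dim V$-dimensional set of tropical points: the coordinatewise valuation is locally constant in the valuation topology, so a small analytic neighborhood of $x$ typically maps to the single point $w$. (Already for the line $\{(s,s+1)\}$ in the $2$-torus and $x=(s_0,s_0+1)$ with $\nu(s_0)=0$, every $s=s_0+\epsilon$ with $\nu(\epsilon)$ large gives the same valuation vector.) The phrase ``generic independence of the leading Puiseux exponents'' is doing all the work and is not an argument; to move in $\trop(V)$ you must pass to genuinely different $K$-points, which is exactly what needs to be proved. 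The standard repair is to stay with initial degenerations: for $w$ interior to a \emph{maximal} cone $C$, use that $\trop(V(\operatorname{in}_w I))$ is the star of $w$ in $\trop(V)$, hence is contained in $L$; since $V(\operatorname{in}_w I)\cap(\C^*)^{q+1}$ is nonempty and $T_L$-invariant, its quotient by $T_L$ has zero-dimensional tropicalization, forcing $\dim V(\operatorname{in}_w I)=\dim L$, and flatness then gives $\dim L=\dim V+1$ exactly; alternatively one follows Bieri--Groves' original valuation-theoretic proof. Your fallback via Sturmfels--Tevelev multiplicities and balancing is also not a fix as stated: those weights are attached to the top-dimensional cells of an already pure $\dim V$-dimensional structure, so invoking balancing to exclude lower-dimensional maximal cones is circular in the usual development.
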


If $V$ is an $\ell$-dimensional linear subvariety of $\P^n$, given as the column space of a matrix $R$,  then the tropicalization $\trop(V)$ depends only on the dependence matroid $\G$ on the rows of $R$. In our setting the rows of $R$ give the defining forms of a hyperplane arrangement \A. The {\em matroid polytope} $\Delta(\G)$ of $\G$ is the convex hull of the set
\[
\big\{\sum_{i\in B} e_i \mid B \ \text{is a basis of} \ \G\big\}.
\]
The tropicalization $\trop(V)$, called the {\em Bergman fan} of \G, is the image in $\TP^n$ of the union of the 
cones of codimension 
at least one in the normal fan of $\Delta(\G)$.  We denote it by $B(\G)$. 

In \cite{FeStu05} the Bergman fan is described in terms of {\em nested set cones}. Let $\mathcal G$ be the set of proper connected (i.e., irreducible) flats of \G. These are the flats corresponding to the dense edges of the projective arrangement \A. A collection $\S=\{X_1,\ldots,X_p\}$ of subsets of $\mathcal G$ is a {\em nested set} if, for every set \T\ of pairwise incomparable elements of \S, the join $\bigvee \T$ is not an element of $\mathcal G$. The nested sets form a simplicial complex $\Delta=\Delta(\G)$, the {\em nested set complex}, which is pure of dimension $r=\ell-1$. It is the coarsest of a family of nested set complexes, obtained by replacing $\mathcal G$ with larger ``building sets." All of these complexes are subdivided by the order complex of the poset of nonempty flats of \G. 

If $S\subseteq \A$, set $e_S=\sum_{H_i\in S} e_i$. The {\em nested set fan} $N(\G)$ is the image in $\TP^n$ of the union of the cones generated by 
\[
\{e_S \mid S \in \S\}
\]
for $\S \in \Delta(\G)$.
From \cite{FeStu05} we have the following result.
\begin{theorem} The nested set fan $N(\G)$ subdivides the Bergman fan $\B(\G)$.
\label{thm:nested}
\end{theorem}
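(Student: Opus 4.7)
My plan is to verify two things: (a) the supports of $N(\G)$ and $B(\G)$ agree as subsets of $\TP^n$, and (b) every cone of $N(\G)$ is contained in some cone of $B(\G)$. Since both fans are pure of the same dimension $\ell-1$ (by Theorem~\ref{thm:dim} for the Bergman fan, and because maximal nested sets have size $\ell$ in $N(\G)$), these two facts together imply that $N(\G)$ refines $B(\G)$.

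The technical workhorse is Sturmfels' circuit criterion: a vector $\omega \in \bbR^{n+1}$ lies in $|B(\G)|$ if and only if, for every circuit $C$ of $\G$, the minimum of $\omega$ on $C$ is attained at least twice. This is equivalent to the normal-fan definition, because these are precisely the $\omega$ whose initial matroid is loopless. To establish $|N(\G)| \subseteq |B(\G)|$, I would take a nested set $\S=\{X_1,\ldots,X_p\}$ and $\omega = \sum c_i e_{X_i}$ with $c_i > 0$. Fix a circuit $C$, let $X$ be the minimal element of $\S$ whose closure contains $C$ (or no constraint if no such $X$ exists), and apply the matroid fact that a circuit not contained in a flat exits that flat in at least two elements. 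The nesting axiom prevents incomparable elements of $\S$ strictly contained in $X$ from jointly covering $C$ by an irreducible flat, so after unwinding the recursive structure, the minimum of $\omega$ on $C$ must be attained at least twice.

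For the reverse containment $|B(\G)| \subseteq |N(\G)|$, I would take $\omega \in |B(\G)|$, normalize so that $\omega \geq 0$ with distinct positive values $t_1 > t_2 > \cdots > t_k > 0$, and form the chain of flats $F_j = \mathrm{cl}\{i : \omega_i \geq t_j\}$. The candidate nested set $\S(\omega)$ consists of the connected (irreducible) components of each $F_j$ that do not already arise as components of some $F_{j'}$ with $j' < j$. Writing $\omega$ as a non-negative combination of the indicator vectors $e_X$ for $X \in \S(\omega)$ is immediate from the level-set construction. The main obstacle is verifying that $\S(\omega)$ satisfies the nesting axiom: one must show that any antichain in $\S(\omega)$ fails to have connected (irreducible) join. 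This is where the circuit criterion for $\omega$ is essential, as it forces the components of $F_j$ to be matroid-closed and connected in a way that persists under taking joins of incomparable pieces from the chain.

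To conclude, the relative interior of each cone $\sigma_\S$ is precisely the locus of $\omega$ for which the above construction returns $\S$; such $\omega$ all share the same initial matroid and thus lie in a single cone of $B(\G)$. The hardest step is the verification of the nesting property in the reverse direction, which requires matroid bookkeeping linking closure, connectedness of flats, and the Bergman fan's circuit-minimum condition; the dimensional match from Theorem~\ref{thm:dim} then upgrades the resulting set-theoretic equality of supports to a bona fide subdivision of fans.
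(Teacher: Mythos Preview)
The paper does not prove this theorem; it quotes it directly from Feichtner--Sturmfels \cite{FeStu05} (see the sentence ``From \cite{FeStu05} we have the following result'' immediately preceding the statement). So there is no in-paper proof to compare against.

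That said, your outline is essentially the argument given in \cite{FeStu05}. The two ingredients you isolate---the circuit criterion for membership in $|B(\G)|$ (minimum on each circuit attained at least twice, equivalently the initial matroid is loop-free) and the level-set construction producing a nested set $\S(\omega)$ from any $\omega\in|B(\G)|$---are exactly what Feichtner and Sturmfels use. Your identification of the delicate step is also correct: checking that the connected components of the successive level-set flats form a nested set is where the matroid work happens, and it relies on the fact that the join of connected flats arising from distinct levels of the filtration cannot itself be connected without forcing a circuit whose $\omega$-minimum is attained only once.

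Two places in your sketch would need tightening if this were to stand as a proof. First, the forward inclusion $|N(\G)|\subseteq|B(\G)|$: your phrase ``after unwinding the recursive structure'' hides the actual argument. One clean way is to note that for $\omega=\sum c_i e_{X_i}$ the value $\omega_j$ depends only on which $X_i$ contain $j$; since a circuit $C$ spans a connected flat, and the $X_i$ meeting $C$ that are maximal among those not containing $C$ cannot cover $C$ (else their join would be connected, contradicting nestedness), at least two elements of $C$ lie in exactly the same collection of $X_i$'s, hence share the minimal $\omega$-value. Second, your claim that points in the relative interior of $\sigma_\S$ all have the same initial matroid is correct, but you should say why: the initial matroid depends only on the ordered partition of the ground set induced by the values of $\omega$, and this is constant on the open cone. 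With those two points filled in, your argument matches the published one.
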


\subsection{Singular subspaces} Let \A\ be an arrangement in $\P^\ell$ with homogeneous defining linear forms $\alpha_0,\ldots, \alpha_n$. Let \G\ be the underlying matroid of \A, the dependence matroid on $\{\alpha_0, \ldots, \alpha_n\}$. Suppose $D$ is a rational subspace of $A^1(\A)$, with integral basis $\Lambda=\{\omega_{\xi_1},\ldots, \omega_{\xi_q}\}$. We identify $\Lambda$ with the $q \times (n+1)$ matrix of integers $\begin{bmatrix}\xi_{ij}\end{bmatrix}$, and recall that $\sum_{j=0}^n \xi_{ij}=0$ for $1\leq i\leq q$.
 Let $\bar{Y}_\Lambda$ be the Zariski closure of the image of the associated rational map $\Phi_\Lambda=[1:\Phi_{\xi_1}:\cdots : \Phi_{\xi_q}] \colon \P^\ell \rightarrowtail \P^q$.

The main observation is that $\Phi_\Lambda$ can be factored as a linear map followed by a monomial map. Assume \A\ is essential, and let 
\[
\alpha =\begin{bmatrix}\alpha_0 : \cdots : \alpha_n\end{bmatrix}\colon \P^\ell \to \P^n.
\]
Let $\mu=\mu_\Lambda \colon \P^n \rightarrowtail \P^q$ be given by 
\[
\mu([t_0:\cdots:t_n])=[1:t^{\xi_1}:\cdots:t^{\xi_q}],
\]
where we use the usual vector notation for monomials: $t^{(i_0,\ldots,i_n)}=t_0^{i_0}\cdots t_n^{i_n}$. Then the following diagram commutes.
\[
\xymatrix{
&{\P^n \vspace*{3pt}}\ar@{>->}[dr]^{\mu_\Lambda}&\\
\P^\ell \  \ar@{>->}[rr]^{\Phi_\Lambda}\ar[ur]^\alpha&\ar[r]&\P^q
}
\]

In this situation the diagram tropicalizes faithfully, in the following sense. 
\begin{theorem}[{\rm \cite[Theorem 3.1]{DFS07}}] The tropicalization $\trop(\bar{Y}_\Lambda)$ is equal to the image of the Bergman fan $B(\G)$ under the linear map 
\[
\TP^n \to \TP^q
\]
with matrix $\Lambda$.
\label{thm:trop}
\end{theorem}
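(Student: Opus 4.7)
The plan is to exploit the factorization $\Phi_\Lambda = \mu_\Lambda \circ \alpha$ displayed immediately above the statement, and to combine two general facts: that the tropicalization of a linear subvariety is the Bergman fan of its underlying matroid, and that tropicalization is functorial with respect to monomial maps. The linear map $\alpha$ embeds $\P^\ell$ as an $\ell$-dimensional linear subvariety $V \subset \P^n$ whose defining matrix has row-matroid $\G$, and $\mu_\Lambda$ is the monomial map with exponent matrix $\Lambda$. Since $\Lambda$ is essential, $V$ meets the open torus $(\C^*)^n$ densely, so $\bar Y_\Lambda$ is the Zariski closure of $\mu_\Lambda(V \cap (\C^*)^n)$.

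For the first ingredient, I would identify $\trop(V) = B(\G)$ via the Bergman--Sturmfels theorem together with the Ardila--Klivans characterization, using the nested set subdivision from \cite{FeStu05} (Theorem 5.2 above) to pin down the fan structure. For the second ingredient, I would invoke the general principle that if $T\colon (\C^*)^{n+1} \to (\C^*)^{q+1}$ is a monomial map with integer exponent matrix $M$ and $W \subset (\C^*)^{n+1}$ is any subvariety, then
\[
\trop\bigl(\overline{T(W)}\bigr) \,=\, M_{\bbR}\bigl(\trop(W)\bigr),
\]
where $M_\bbR$ is the induced $\bbR$-linear map on cocharacter spaces. Because each row of $\Lambda$ sums to zero, $M_\bbR$ descends to a well-defined linear map $\TP^n \to \TP^q$, and this descended map is exactly the one in the statement. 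Chaining these,
\[
\trop(\bar Y_\Lambda) \,=\, \trop\bigl(\overline{\mu_\Lambda(V)}\bigr) \,=\, \Lambda_{\bbR}(\trop(V)) \,=\, \Lambda_{\bbR}(B(\G)),
\]
which is the desired equality.

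The main obstacle will be the careful verification of the functoriality statement. One inclusion is easy: pushing a Puiseux curve in $V$ forward through $\mu_\Lambda$ realizes any point of $\Lambda_\bbR(B(\G))$ as the tropicalization of a curve in $\bar Y_\Lambda$. The reverse inclusion requires that every Puiseux curve germ in $\bar Y_\Lambda$ admits, after an appropriate finite base change, a lift to a curve in $V$ whose image has the prescribed leading exponents; this is where one uses that $\mu_\Lambda|_V\colon V \rightarrowtail \bar Y_\Lambda$ is dominant, guaranteed by essentiality. One must also confirm that $\Lambda_\bbR(B(\G))$ is already a closed polyhedral set in $\TP^q$, so that the set-theoretic image requires no further closure; this follows because $B(\G)$ is a rational polyhedral fan and $\Lambda_\bbR$ is linear, provided one checks that the recession directions along the lineality of $B(\G)$ map correctly in $\TP^q$. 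Since Theorem~5.1 already gives $\dim \trop(\bar Y_\Lambda) = \dim \bar Y_\Lambda = \rank \Lambda_\bbR|_{B(\G)}$, a dimension count would provide an additional consistency check along the way.
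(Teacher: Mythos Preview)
The paper does not prove this statement: it is quoted as \cite[Theorem~3.1]{DFS07} and used as a black box, with no argument supplied. So there is nothing in the paper to compare your proposal against.

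That said, your outline is a faithful reconstruction of how the cited result applies here. The factorization $\Phi_\Lambda = \mu_\Lambda \circ \alpha$ is exactly what the paper sets up in the commuting triangle immediately preceding the statement, and your two ingredients---$\trop(\alpha(\P^\ell)) = B(\G)$ for the linear embedding, and $\trop(\overline{T(W)}) = M_{\bbR}(\trop(W))$ for the monomial map---are the right ones. One caution: the second ingredient, the functoriality of tropicalization under monomial maps (the surjectivity direction in particular), is not a ``general principle'' one may simply invoke; it is essentially the content of the theorem being cited (this is the tropical elimination theorem of Sturmfels--Tevelev, which \cite{DFS07} develops and applies). Your discussion of the ``main obstacle'' shows you are aware of this, but if your goal were an independent proof rather than an exposition of \cite{DFS07}, you would need to carry out that lifting argument in full rather than assert it.
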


We obtain the following characterization. Write $\Lambda=\begin{bmatrix} \Lambda_0  | &\cdots& |  \Lambda_n
\end{bmatrix}$ with $\Lambda_j\in \Z^q$ for each $j$. 
For $S=\{H_{j_1},\ldots, H_{j_k}\} \subseteq \A$, let $\Lambda_S =\sum_{r=1}^k \Lambda_{j_r}$.

\begin{theorem} The subspace $D$ is singular if and only if the rank of the matrix
\[
\Lambda_\S=\begin{bmatrix} \Lambda_{S_1}  | &\cdots& |  \Lambda_{S_{\ell-1}}
\end{bmatrix}
\]
is less than $q$, for each maximal nested set $\S \in N(\G)$. In this case the rank of $D$ is the maximal rank of $\Lambda_\S$ for $\S\in N(\G)$.
\label{thm:rank}
\end{theorem}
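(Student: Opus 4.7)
The plan is to translate the statement about the multiplication map $\bigwedge^p(D) \to A^p$ into geometry of the image variety $\bar Y_\Lambda$, and then read off its dimension from the tropicalization. By Corollary~\ref{cor:dim}, the rank of $D$ equals $\dim \bar Y_\Lambda$, so $D$ is singular precisely when $\dim \bar Y_\Lambda < q$. Therefore the entire theorem reduces to the single claim
\[
\dim \bar Y_\Lambda \;=\; \max_{\S}\,\rank \Lambda_\S,
\]
the maximum ranging over maximal nested sets $\S \in N(\G)$.

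Next I would invoke Theorem~\ref{thm:dim} together with Theorem~\ref{thm:trop}. The first gives $\dim \bar Y_\Lambda$ as the maximum dimension of a (maximal) cone of $\trop(\bar Y_\Lambda)$; the second identifies $\trop(\bar Y_\Lambda)$ with the image of the Bergman fan $B(\G)$ under the linear map $\bar\Lambda \colon \TP^n \to \TP^q$ whose matrix is $\Lambda$. So $\dim \bar Y_\Lambda$ equals the largest dimension of $\bar\Lambda(C)$ as $C$ varies over cones of $B(\G)$. By Theorem~\ref{thm:nested} the nested set fan $N(\G)$ refines $B(\G)$, so it suffices to range $C$ over cones of $N(\G)$; since the image of a face is a face of the image, the maximum is attained on a top-dimensional cone, namely one of the form $\mathrm{cone}(e_{S_1},\ldots,e_{S_{\ell-1}})$ for a maximal nested set $\S=\{S_1,\ldots,S_{\ell-1}\}$.

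It then remains to verify the identity $\dim \bar\Lambda(\mathrm{cone}(e_{S_1},\ldots,e_{S_{\ell-1}})) = \rank \Lambda_\S$. The natural way to do this is to lift $\Lambda$ to the $(q+1)\times(n+1)$ matrix $\tilde\Lambda$ obtained by prepending a zero row, corresponding to the first coordinate of the homogeneous map $\Phi_\Lambda = [1:\Phi_{\xi_1}:\cdots:\Phi_{\xi_q}]$. The row sums $\sum_j \xi_{ij}=0$ guarantee that $\tilde\Lambda$ annihilates the diagonal $\bbR(1,\ldots,1)\subset \bbR^{n+1}$, so $\bar\Lambda$ is a well-defined map between tropical projective spaces. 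Each $e_{S_i}$ maps to the class of the vector $(0,\Lambda_{S_i}) \in \bbR^{q+1}$, and the leading zero entry keeps $(1,\ldots,1)$ out of the $\bbR$-span of these images; hence passage to $\bbR^{q+1}/\bbR(1,\ldots,1)$ does not drop the rank, and the dimension of the image cone is precisely $\rank \Lambda_\S$.

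The main obstacle is the bookkeeping in the previous paragraph: one must keep projective and affine tropical coordinates straight, and carry along the extra zero coordinate coming from the normalization $\Phi_{\xi_0}\equiv 1$, so that no spurious rank drop appears when passing to the quotient. Once this is handled, the theorem follows by concatenating Corollary~\ref{cor:dim} with Theorems~\ref{thm:dim}, \ref{thm:nested}, and~\ref{thm:trop}.
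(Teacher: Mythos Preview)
Your proposal is correct and follows essentially the same route as the paper: reduce to $\dim\bar Y_\Lambda$ via Corollary~\ref{cor:dim}, then chain Theorems~\ref{thm:dim}, \ref{thm:trop}, and \ref{thm:nested} to read off that dimension as the maximal rank of $\Lambda_\S$ over nested sets. The paper's proof simply asserts ``$\Lambda(e_S)=\Lambda_S$'' and stops, whereas you spell out the coordinate bookkeeping for the quotient $\TP^q=\bbR^{q+1}/\bbR(1,\ldots,1)$ by prepending a zero row to $\Lambda$; this extra care is not in the paper but is a reasonable clarification of a point the authors leave implicit.
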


\begin{proof}  The subspace $D$ is singular if and only if $\dim \bar{Y}_\Lambda<q$. By Theorem~\ref{thm:dim}, this occurs if and only if $\dim \trop(\bar{Y}_\Lambda)<q$. The cones of $\trop(\bar{Y}_\Lambda)$ are images of the cones of $B(\G)$ under $\Lambda$, by Theorem~\ref{thm:trop}. The linear hulls of the cones in $B(\G)$ are the images in $\TP^n$ of the linear spans of  the sets $\{e_S \mid S \in \S\}$, for $\S \in N(\G)$, by Theorem~\ref{thm:nested}. Since $\Lambda(e_S)=\Lambda_S$, the result follows. The last statement holds because the rank of $D$ is equal to $\dim \bar{Y}_\Lambda$.
\end{proof}

\begin{example}
\label{ex:prism}
Consider the arrangement of rank four with defining polynomial
\[ Q=xyz(x+y+z)w(x+y+w), \]
with hyperplanes ordered according to the given factorization of $Q$. The dual point configuration consists of the six vertices of a triangular prism in $\P^3$. 

For generic $(a,b,c)$, the master function
$\Phi=x^ay^{-a}z^b(x+y+z)^{-b}w^c(x+y+w)^{-c}$ has critical set of codimension two, and $H^2(A,\omega)\cong \C$ for $\omega=\d \log(\Phi)$. Based on our other examples, one might suspect that the subspace $D$ with basis $\Lambda=\{\omega_0-\omega_1, \ \omega_2-\omega_3, \ \omega_4-\omega_5\}$ is singular. 
Among the nested sets of \A\ is the set $\S=\{0,02,024\}$, and 
\[
\Lambda_\S=\begin{bmatrix}1&1&1\\0&1&1\\0&0&1
\end{bmatrix}
\]
does not have rank two. Then by Theorem~\ref{thm:rank}, $D$ is not singular. In fact, $\psi= a\cdot\partial(\omega_0\omega_1\omega_5)+b\cdot\partial(\omega_2\omega_3\omega_5)$ is the unique 2-cocycle for $\omega$. $\psi$ is trivial if $a$ or $b$ is zero, and our argument shows that $\psi$ is not decomposable if $a$ and $b$ are both nonzero.

\end{example}

In the forthcoming paper \cite{BFW11}, Theorem~\ref{thm:rank} is used to derive combinatorial conditions for $p$-generic arrangements to support singular subspaces of rank $p$. Using that approach one can show by combinatorial means that there are no singular subspaces of $A^1$ of rank two in Example~\ref{ex:prism}.

Theorem~\ref{thm:rank} also has the following corollary.

\begin{corollary} If $\G_1$ and $\G_2$ are loop-free matroids on the ground set $\{1,\ldots, n\}$ and $B(\G_1)=B(\G_2)$, then $\G_1=\G_2$.
\end{corollary}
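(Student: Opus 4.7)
The plan is to show that for a loop-free matroid $\G$, the set $B(\G)\subset\TP^n$ determines $\G$. The bridge is the matroid polytope $\Delta(\G)\subset\bbR^{n+1}$, since by definition $B(\G)$ is the image in $\TP^n$ of the union of codimension-$\geq 1$ cones in the normal fan $\mathcal N(\Delta(\G))$.

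First I would recover $\mathcal N(\Delta(\G))$ from $B(\G)$. Pulling back along the projection $\bbR^{n+1}\to\TP^n$ yields exactly the union of codimension-$\geq 1$ cones of $\mathcal N(\Delta(\G))$ in $\bbR^{n+1}$, since every such cone already contains the lineality $\bbR(1,\dots,1)$. The top-dimensional cones are then recovered as the closures of the connected components of the complement, so the whole normal fan is determined as a polyhedral complex. Since a polytope is determined by its normal fan up to translation, $B(\G_1)=B(\G_2)$ forces $\Delta(\G_1)=\Delta(\G_2)+v$ for some $v\in\bbR^{n+1}$.

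The main (and essentially only) nontrivial step is to pin down $v=0$. Both $\Delta(\G_i)$ are $0/1$ polytopes, so the relation $e_B+v=e_{B'}$ between bases $B$ of $\G_1$ and bases $B'$ of $\G_2$ forces each coordinate $v_i\in\{-1,0,1\}$. If $v_i=1$, then $(e_B)_i=0$ for every basis $B$ of $\G_1$, so $i$ is a loop of $\G_1$; symmetrically, $v_i=-1$ makes $i$ a loop of $\G_2$. Both possibilities contradict the loop-free hypothesis, so $v=0$ and $\Delta(\G_1)=\Delta(\G_2)$. Since the vertices of a matroid polytope are the indicator vectors of its bases, this yields $\G_1=\G_2$.
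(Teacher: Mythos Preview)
Your approach is different from the paper's and mostly workable, but there is a genuine gap. The assertion ``a polytope is determined by its normal fan up to translation'' is false in general: the unit square $[0,1]^2$ and the rectangle $[0,1]\times[0,2]$ have the same normal fan but are not translates. The normal fan records facet \emph{directions}, not facet positions, so polytopes with the same normal fan form a positive-dimensional family in general.

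The gap is reparable because you are dealing with matroid polytopes, not arbitrary polytopes. From the normal fan you can read off, for each pair of adjacent maximal cones, the direction of the corresponding edge of the polytope (it is orthogonal to the shared wall). For a $0/1$-polytope every edge vector lies in $\{-1,0,1\}^n$, and a ray in $\bbR^n$ meets $\{-1,0,1\}^n\setminus\{0\}$ in at most one point, so the edge \emph{direction} actually determines the edge \emph{vector}. Since the edge graph is connected, placing one vertex determines all of them; hence two $0/1$-polytopes with the same normal fan are translates, and your $v$-argument then finishes the proof. You should insert this step explicitly.

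By contrast, the paper's proof does not pass through the matroid polytope at all. It invokes Theorem~5.3 (tropical implicitization) to show that a subset $S=\{i_1,\dots,i_k\}$ is dependent in $\G$ if and only if the image of $B(\G)$ under the coordinate projection $\TP^n\to\TP^{S}$ has dimension less than $k-1$; this recovers the dependent sets, hence $\G$, directly from $B(\G)$. Your route is more elementary and self-contained (it uses only the polytope definition of $B(\G)$), while the paper's route exploits the tropical machinery developed in the section and illustrates how the rank condition (Theorem~5.4) encodes the matroid.
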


\begin{proof} Let \G\ be a loop-free matroid on $\{1,\ldots, n\}$, with Orlik-Solomon algebra $\AA=\AA(\G)$. Let $e_1,\ldots, e_n \in A^1$ denote the canonical generators. Then $S=\{i_1,\ldots,i_k\}\subseteq \{1,\ldots,n \}$ is dependent in \G\ if and only if $e_{i_1}\we \cdots \we e_{i_k}=0$ in $A^k$. (This statement holds even if \G\ has multiple points.) Equivalently, $S$ is dependent if and only if the coordinate subspace $D\subseteq A^1$ spanned by $\{e_{i_1},\ldots,e_{i_k}\}$ is singular. By Theorem~\ref{thm:trop}, $D$ is singular if and only if the image of the Bergman fan $B(\G)\subseteq \TP^n$ under the projection $\TP^n \to \TP^S \cong \TP^{k-1}$ has dimension less than $k-1$. Thus $B(\G)$ determines \G.
\end{proof}

\end{section}
\begin{ack} This work was begun during the semester-long program on Hyperplane Arrangements at MSRI in Fall, 2004. The authors are grateful to the institute for support. We thank Bernd Sturmfels for pointing out the relevance of Theorem~\ref{thm:trop}. We also thank an anonymous referee for pointing out the phenomena illustrated in Example~\ref{ex:referee}, leading us to correct an earlier version of Theorem 3.5.
\end{ack}

\newcommand{\arxiv}[1]
{\texttt{\href{http://arxiv.org/abs/#1}{arxiv:#1}}}

\renewcommand{\MR}[1]
{\href{http://www.ams.org/mathscinet-getitem?mr=#1}{MR#1}}

\newcommand{\web}[1]{{\texttt{\href{http://#1}{#1}}}}

\bibliographystyle{plain}

\end{document}